\numberwithin{equation}{section}
\let\al=\alpha
\let\d=\delta
\let\e=\varepsilon
\let\s=\sigma
\let\f=\frac
\let\om=\omega
\let\D=\Delta
\let\tri=\triangle
\let\pa=\partial
\def\cS{{\mathcal S}}
\def\Im{\mathbf {Im}}
\def\R{\mathbb R}
\def\Z{ \mathbb Z}
\def\T{\mathbb T}
\def\eqdef{\buildrel\hbox{\footnotesize def}\over =}
\def\D{\langle D\rangle}
\def\k{\langle k\rangle}
\newcommand{\beq}{\begin{equation}}
\newcommand{\eeq}{\end{equation}}
\newcommand{\ben}{\begin{eqnarray}}
\newcommand{\een}{\end{eqnarray}}
\newcommand{\beno}{\begin{eqnarray*}}
\newcommand{\eeno}{\end{eqnarray*}}
\newtheorem{theorem}{Theorem}[section]
\newtheorem{lemma}[theorem]{Lemma}
\newtheorem{proposition}[theorem]{Proposition}
\newtheorem{corol}[theorem]{Corollary}
\newtheorem{remark}[theorem]{Remark}
\begin{document}
\begin{CJK*}{UTF8}{gkai}
\title[Gevrey stability of hydrostatic approximate]{Gevrey stability of hydrostatic approximate for the Navier-Stokes equations in a thin domain}

\author[C. Wang]{Chao Wang}
\address{School of Mathematical Sciences\\ Peking University\\ Beijing 100871,China}
\email{wangchao@math.pku.edu.cn}

\author[Y. Wang]{Yuxi Wang}
\address{School of Mathematical Sciences\\ Peking University\\ Beijing 100871,China}
\email{wangyuxi0422@pku.edu.cn}

\author[Z. Zhang]{Zhifei Zhang}
\address{School of Mathematical Sciences\\ Peking University\\ Beijing 100871,China}
\email{zfzhang@math.pku.edu.cn}

\date{\today}

\maketitle

\begin{abstract}
In this paper, we justify the limit from the Navier-Stokes system in a thin domain to the hydrostatic Navier-Stokes/Prandtl system {for the convex initial data with Gevrey 9/8 regualrity in $x$.}
\end{abstract}

\section{introduction}

In this paper, we consider 2-D incompressible Navier-Stokes equations in a thin domain when the depth of the domain and the viscosity  coefficient converge to zero simultaneously in a related way:
\begin{equation}\label{eq:NS}
\left\{\begin{array}{l}{\partial_{t} U+U \cdot \nabla U-\varepsilon^{2} (\pa_x^2+\eta\pa_y^2) U+\nabla P=0 \quad \text { in } \mathcal{S}^{\varepsilon} \times (0, \infty) }, \\ {\operatorname{div} U=0},\\
U|_{y=0}=U|_{y=\e}=0,
\end{array}\right.
\end{equation}
where $\mathcal{S}^\e=\left\{(x, y) \in \mathbb{T}\times\mathbb{R} : 0<y<\varepsilon\right\}$, and  $U(t,x,y), P(t,x,y)$ stand for the velocity and pressure function respectively and $\eta$ is a positive constant independent of $\varepsilon$. 
The system is prescribed with the initial data of the form
\begin{equation*}
\left.U\right|_{t=0}=\left(u_{0}\left(x, \frac{y}{\varepsilon}\right), \varepsilon v_{0}\left(x, \frac{y}{\varepsilon}\right)\right)=U_{0}^{\varepsilon} \quad \text {in}\quad \mathcal{S}^{\varepsilon}.
\end{equation*}

We rescale $(U, P)$ as follows
\begin{equation*}
U(t, x, y)=\left(u^{\varepsilon}\left(t, x, \frac{y}{\varepsilon}\right), \varepsilon v^{\varepsilon}\left(t, x, \frac{y}{\varepsilon}\right)\right) \quad \text { and } \quad P(t, x, y)=p^{\varepsilon}\left(t, x, \frac{y}{\varepsilon}\right).
\end{equation*}
Then the system \eqref{eq:NS}  is reduced to  the following scaled anisotropic Navier-Stokes system:
\begin{align}\label{eq:ANS}
\left\{
\begin{aligned}
&\pa_t u^\e+u^\e\pa_x u^\e+v^\e\pa_y u^\e-\e^2\pa_x^2 u^\e-\eta\pa_y^2 u^\e+\pa_x p^\e=0\quad \text { in } \mathcal{S} \times (0, \infty) ,\\
&\e^2(\pa_t v^\e+u^\e\pa_x v^\e+v^\e\pa_y v^\e-\e^2\pa_x^2 v^\e-\eta\pa_y^2 v^\e)+\pa_y p^\e=0\quad \text { in } \mathcal{S} \times (0, \infty) ,\\
&\pa_x u^\e+\pa_y v^\e=0 \quad \text { in } \mathcal{S} \times (0, \infty),\\
&(u^\e,v^\e)|_{y=0,1}=0,\\
&(u^\e,v^\e)|_{t=0}=(u_0,v_0) \quad \text { in  }\mathcal{S},
\end{aligned}
\right.
\end{align}
where $\mathcal{S}=\left\{(x, y) \in \mathbb{T}\times(0,1) \right\}$.
This is a classical model in geophysical fluid, where the vertical dimension of the domain is very small compared with the  horizontal  dimension of the domain. For simplicity, we take $\eta=1$ in the sequel and denote $\Delta_\e=\e^2\pa_x^2+\pa_y^2$.

Formally, taking $\e\to 0$ in \eqref{eq:ANS}, we derive the hydrostatic Navier-Stokes/Prandtl system (see \cite{LL, Renardy}):
\begin{align}\label{eq:HyNS}
\left\{
\begin{aligned}
&\pa_t u^p+u^p\pa_x u^p+v^p\pa_y u^p-\pa_y^2 u^p+\pa_x p^p=0\quad \text { in } \mathcal{S} \times (0, \infty) ,\\
&\pa_y p^p=0\quad \text { in } \mathcal{S} \times (0, \infty) ,\\
&\pa_x u^p+\pa_y v^p=0 \quad \text { in } \mathcal{S} \times (0, \infty),\\
&(u^p,v^p)|_{y=0,1}=0,\\
&u^p|_{t=0}=u_0 \quad \text { in }\,\, \mathcal{S}.
\end{aligned}
\right.
\end{align}

The goal of this paper is to justify the limit from the scaled anisotropic Navier-Stokes system \eqref{eq:ANS}  
to the hydrostatic Navier-Stokes system \eqref{eq:HyNS}. \smallskip

The first step is to deal with the well-posedness of  the system \eqref{eq:HyNS}. Similar to the classical Prandtl equation, nonlinear term $v^p\pa_y u^p$ will lead to one derivative loss in $x$ via the direct energy estimate to \eqref{eq:HyNS}. 
Indeed, the system \eqref{eq:HyNS}  may not be well-posed for general data in the Sobolev space \cite{Renardy}. 
However, the system is well-posed for analytic data \cite{PZZ}. A classical result for the Prandtl equation is the well-posedness in the Sobolev space for monotonic data in $y$ direction \cite{Olei, AW, MW}. This kind of data is forbidden  for the system \eqref{eq:HyNS} due to the boundary condition. Recently,  Gerard-Varet, Masmoudi and Vicol \cite{GMV} proved the well-posedness of the system \eqref{eq:HyNS}  for class of convex data in the Gevrey class $\f98$.

A natural question is whether the limit could be justified in the Gevrey class $\f 9 8$. In a recent work \cite{PZZ}, Paicu, Zhang and the third author justified the global (in time) limit for small analytic data. For the data in Gevrey class or Sobolev space, this question is highly nontrivial. In fact,  although the Prandtl equation is well-posed in the Sobolev space or Gevrey class(see \cite{GM, CWZ, LY, DG}), the question of the inviscid limit of the Navier-Stokes equations in the same spaces remains a challenging problem.\smallskip

Motivated by the methods introduced in \cite{GMV, WWZ}, we justify the limit from the system \eqref{eq:ANS}  
to  \eqref{eq:HyNS} for a class of convex data in the Gevrey class $\f 98$.
More precisely,   we consider the initial data of the form
\begin{align}\label{ass:data-form}
u^\e(0,x,y)=u_0(x,y),\quad v^\e(0,x,y)=v_0(x,y),
\end{align}
which  satisfy the {compatibility conditions }
\begin{align}
&\pa_x u_0+\pa_y  v_0=0,\quad u_0(x,0)=u_0(x,1)=v_0(x,0)=v_0(x,1)=0,\label{ass:data-com1} \\
&\int_0^1\pa_x u_0dy=0,\quad \pa_y^2 u_0|_{y=0,1}=\int_0^1(-\pa_x u^2_0+\pa_y^2u_0)dy-\int_\mathcal{S}\pa_y^2 u_0,\label{ass:data-com2}
\end{align}
and the convex condition
\begin{align}\label{ass:data-convex}
\inf_{\mathcal{S}}\pa_y^2u_0=2\delta_0>0.
\end{align}
We further assume that initial data falls into the Gevrey class with the bound
 \begin{align}\label{ass:data-bound}
 \| \pa_y u_0\|_{X^{N_0}_{\sigma,\tau_0}}+ \|  \pa_y^3 u_0\|_{X^{N_0-4}_{\sigma,\tau_0}}=M<+\infty.
\end{align}
Here the Gevrey class normal $\|\cdot\|_{X^{r}_{\sigma,\tau}}$ is defined by
\begin{align*}
\|f\|_{X^r_{\sigma,\tau}}^2=\| e^{\tau \langle D_x\rangle^{\sigma}}f\|_{H^{r,0}}^2,
 \end{align*}
with $\|f\|_{H^{r,s}}=\|\| f\|_{H^r_x(\mathbb{T})}\|_{H^s_y( 0,1)}.$\smallskip

For the data satisfying \eqref{ass:data-com1}-\eqref{ass:data-bound}, following the proof  in \cite{GMV}, one can prove the following local well-posedness result for the system \eqref{eq:HyNS}.

 \begin{theorem} \label{thm:HyNS}
Let the initial data $u_0$ satisfy  \eqref{ass:data-com1}-\eqref{ass:data-bound}  with $\sigma\in [\f89,1], \tau_0>0$ and $N_0\ge 10$.  Then there exist $T>0$ and a unique solution $u^p$ of \eqref{eq:HyNS}, which satisfies 
\begin{align*}
&\sup_{t\in[0,T]}\big(\|\pa_y u^p(t)\|_{X^{N_0-1}_{\sigma,\tau}}+\|\pa_y^3 u^p(t)\|_{X^{N_0-5}_{\sigma,\tau}}\big)<+\infty,\\
&\sup_{t\in[0,T]\times \mathcal{S}}\pa_y^2 u^p>\d_0.
\end{align*}
\end{theorem}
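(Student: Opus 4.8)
The plan is to adapt the hydrostatic Gevrey well-posedness argument of Gerard-Varet–Masmoudi–Vicol \cite{GMV} to the present setting, exploiting the convexity condition \eqref{ass:data-convex} to control the critical term $v^p\partial_y u^p$. The fundamental difficulty, as the text emphasizes, is the loss of one horizontal derivative: a naive energy estimate on $\partial_y u^p$ produces a term in which $v^p=-\int_0^y\partial_x u^p$ couples to $\partial_y u^p$, costing a derivative in $x$ that the diffusion $-\partial_y^2$ cannot recover. The convexity $\partial_y^2 u^p\ge\delta_0>0$ is precisely what allows one to cancel this loss, via a Rayleigh-type change of unknown.

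**First I would** introduce the auxiliary variable adapted to convexity. Following \cite{GMV}, set $g=\partial_y\big(\partial_y u^p/\partial_y^2 u^p\big)$ or, more precisely, work with the quantity obtained by dividing a suitable vertical derivative of $u^p$ by the background profile $\partial_y^2u^p$; this ``good unknown'' satisfies an equation in which the dangerous term $v^p\partial_y u^p$ is transformed into terms that are either lower order or carry a favorable sign. The convexity lower bound guarantees the divisor stays bounded away from zero on a time interval, so the change of variables is invertible with Gevrey-class coefficients. I would then set up the Gevrey energy functional
\begin{align*}
E(t)=\|\partial_y u^p(t)\|_{X^{N_0-1}_{\sigma,\tau(t)}}^2+\|\partial_y^3 u^p(t)\|_{X^{N_0-5}_{\sigma,\tau(t)}}^2,
\end{align*}
with a time-decreasing weight $\tau(t)$, so that the loss of one derivative in $x$ is absorbed by the dissipation of Gevrey radius: the term $\dot\tau(t)\|\langle D_x\rangle^{\sigma/2}\cdots\|^2$ produced by differentiating $e^{\tau(t)\langle D_x\rangle^\sigma}$ in $t$ supplies a gain that, at Gevrey index $\sigma\ge\tfrac89$, exactly dominates the derivative loss from the nonlinearity.

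**The heart of the estimate** is the commutator analysis in Gevrey class. Applying $e^{\tau\langle D_x\rangle^\sigma}\langle D_x\rangle^{N_0-1}$ to the equation for the good unknown and pairing with itself, one must bound the commutators between the Gevrey multiplier and the transport, nonlinear, and division-by-profile operations. Each such commutator is estimated through the bilinear Gevrey inequality that the convolution structure of $\langle D_x\rangle^\sigma$ (with $\sigma<1$, hence subadditive up to a controllable symmetric remainder) furnishes a half-derivative gain $\langle D_x\rangle^{\sigma/2}$ on each factor, which is matched against the weight dissipation $-\dot\tau\,\langle D_x\rangle^{\sigma/2}$. The positivity of $\partial_y^2 u^p$ is used once more to keep all profile-dependent coefficients in the Gevrey algebra uniformly in time, and a continuity (bootstrap) argument then propagates both the Gevrey bound and the lower bound $\inf\partial_y^2 u^p>\delta_0$ on a short time interval $[0,T]$.

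**The main obstacle** I expect is the treatment of the trace terms enforced by the boundary conditions $u^p|_{y=0,1}=0$ together with the compatibility conditions \eqref{ass:data-com2}: because $v^p$ is reconstructed nonlocally from $\partial_x u^p$ and the pressure $p^p$ is determined by the constraint $\int_0^1\partial_x u^p\,dy=0$, the change of unknown must be performed in a way compatible with these boundary and zero-average constraints, and the Gevrey estimates for the third-order quantity $\partial_y^3 u^p$ near $y=0,1$ require the second compatibility condition in \eqref{ass:data-com2} to avoid boundary layers in the normal variable. Handling these boundary contributions without losing the derivative gain — in particular verifying that the good unknown inherits the correct boundary behavior so that integration by parts in $y$ produces no uncontrolled traces — is the delicate point; once it is settled, closing the energy inequality $\tfrac{d}{dt}E(t)+(\text{dissipation})\le C\,E(t)^{3/2}$ and invoking uniqueness via an analogous estimate on the difference of two solutions is routine.
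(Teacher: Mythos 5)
The paper does not prove Theorem \ref{thm:HyNS} directly: it invokes the argument of \cite{GMV} verbatim, whose two pillars are (i) the hydrostatic trick on the vorticity equation and (ii) a boundary corrector for the vorticity. Your proposal captures the spirit of (i) --- a convexity-weighted quantity --- but misstates the analytic mechanism and omits (ii) entirely, and both are genuine gaps. On the mechanism: you claim the radius dissipation $-\dot\tau\,\|\langle D_x\rangle^{\sigma/2}\cdot\|^2$ ``exactly dominates the derivative loss from the nonlinearity'' for $\sigma\ge\frac89$. For $\sigma<1$ this dissipation gains strictly less than one $x$-derivative, so it can never absorb the full derivative lost through $v^p\partial_y\omega^p$ with $v^p=-\int_0^y\partial_x u^p\,dz$; this is precisely why the system is ill-posed in Sobolev spaces and why, absent structure, one needs analyticity $\sigma=1$. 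In \cite{GMV} that term is not absorbed but cancelled identically: testing the vorticity equation with $\omega^p/\partial_y\omega^p$ (admissible since $\partial_y\omega^p=\partial_y^2u^p\ge\delta_0$) gives $\int_{\mathcal{S}} v^p\partial_y\omega^p\,\frac{\omega^p}{\partial_y\omega^p}\,dxdy=\int_{\mathcal{S}}v^p\omega^p\,dxdy=0$. The Gevrey dissipation is reserved for the sub-principal losses (commutators and boundary data); an energy scheme that relies on the weight to beat the principal loss does not close for any $\sigma<1$.

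The second gap is the boundary corrector. Since $\omega^p|_{y=0,1}\neq0$, integrating the viscous term by parts in the weighted energy produces traces of $\partial_y\omega^p$ at $y=0,1$, and by the equation this trace equals, to leading order, $-\partial_x\int_0^1(u^p)^2dy$ (compare the compatibility condition \eqref{ass:data-com2}): a term losing one full derivative precisely where no $y$-dissipation is available. You identify this boundary issue as ``the delicate point'' but then declare the remainder routine; in \cite{GMV} settling it is the second main construction: one solves a heat equation on a half-line in $y$ with this Neumann data to build $\omega^b$, proves smoothing estimates for it (heat kernel in $y$ combined with the Gevrey dissipation in time, gaining roughly $\langle D_x\rangle^{3\sigma/4}$ --- cf. Lemma \ref{lem:lift} of this paper for the analogous statement), and runs the hydrostatic trick on $\omega^{in}=\omega^p-\omega^b$. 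The restriction $\sigma\in[\frac89,1]$ arises exactly from requiring the corrector's losses (exponents of the form $1-\frac{3\sigma}4$, $2-\frac{7\sigma}4$, etc.) to be dominated by the Gevrey gain $\frac{\sigma}2$ --- not, as you suggest, from matching the interior commutators. Without this ingredient your energy inequality cannot be closed, so the proposal as written does not yield the theorem.
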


\smallskip

Now we state the main result of this paper.

\begin{theorem}\label{thm:limit}
Let initial data $u_0$ satisfies  \eqref{ass:data-com1}-\eqref{ass:data-bound}  with $\sigma\in [\f89,1], \tau_0>0$ and $N_0\ge 10$.  Then  there exists a unique solution of the Navier–Stokes equations \eqref{eq:ANS} in $[0,T]$, which satisfies
\begin{align*}
\|(u^\e-u^p,\e v^\e-\e v^p)\|_{L^2_{x,y}\cap L^\infty_{x,y}}\leq C\e^2,
\end{align*}
where $(u^p,  v^p)$ is given by Theorem \ref{thm:HyNS} and $C$ is a constant independent of $\e$. 
\end{theorem}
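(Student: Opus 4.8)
The plan is to use the hydrostatic profile $(u^p,v^p,p^p)$ itself as the approximate solution and to control the remainder
\[
u:=u^\e-u^p,\qquad v:=v^\e-v^p,\qquad q:=p^\e-p^p .
\]
By Theorem \ref{thm:HyNS} the profile already satisfies $\pa_x u^p+\pa_y v^p=0$ and $(u^p,v^p)|_{y=0,1}=0$, so it is divergence free and matches the no-slip condition exactly; hence no boundary-layer corrector is needed and $(u,v)$ has homogeneous boundary data. Subtracting \eqref{eq:HyNS} from \eqref{eq:ANS}, the only inhomogeneities in the resulting system for $(u,v,q)$ are the consistency errors of the hydrostatic approximation: $\e^2\pa_x^2 u^p$ in the horizontal equation and $-\e^2(\pa_t v^p+u^p\pa_x v^p+v^p\pa_y v^p-\Delta_\e v^p)$ in the vertical one. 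Since $u^p$, and hence $v^p=-\int_0^y\pa_x u^p\,dy'$, is Gevrey regular by Theorem \ref{thm:HyNS}, both sources are $O(\e^2)$ in the relevant Gevrey norm, which is exactly the claimed rate. Once the remainder is known to be $O(\e^2)$ the quadratic self-interaction $u\pa_x u+v\pa_y u$ is $O(\e^4)$, so the problem reduces to a \emph{stable linear estimate}, around the convex profile $u^p$, for a forcing of size $\e^2$.

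The heart of the matter is an a priori Gevrey estimate for this linearized remainder system on the existence interval $[0,T]$ of Theorem \ref{thm:HyNS}. I would first eliminate the pressure using the thin-domain structure: the $\e^2$-prefactor forces $\pa_y q=O(\e^2)$, so $q=\bar q(t,x)+O(\e^2)$ with $\bar q=\int_0^1 q\,dy$, while $v=-\int_0^y\pa_x u\,dy'$ is slaved to $u$; the horizontal gradient $\pa_x\bar q$ is then fixed by the constraint $\int_0^1 u\,dy=0$ (preserved since $u(0)=0$), exactly as in the reduced continuity equation of \eqref{eq:HyNS}. This leaves a scalar evolution for $u$, on which I would run the energy method for the vorticity-type quantities $\pa_y u$ and $\pa_y^3 u$ dictated by \eqref{ass:data-bound}, after applying the Gevrey multiplier $e^{\tau(t)\langle D_x\rangle^\sigma}$ with a \emph{decreasing} radius $\tau(t)$. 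The vertical dissipation $-\pa_y^2$ and the gain $-\dot\tau\,\|\langle D_x\rangle^{\sigma/2}e^{\tau\langle D_x\rangle^\sigma}(\cdot)\|^2$ produced by $\dot\tau<0$ are the two positive quantities available to absorb the transport and commutator terms.

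The main obstacle is the single term $v\,\pa_y u^p$ together with its $\pa_y$-differentiated versions: because $v=-\int_0^y\pa_x u\,dy'$ costs one full horizontal derivative, while the $-\dot\tau$ gain only returns $\sigma<1$ derivatives, Gevrey regularity alone cannot close the estimate. This is exactly where the convexity $\pa_y^2 u^p\ge\d_0>0$ coming from \eqref{ass:data-convex} and Theorem \ref{thm:HyNS} is indispensable. Following the cancellation mechanism of \cite{GMV} and the hydrostatic-limit analysis of \cite{WWZ}, I would introduce a convexity-weighted good unknown built from the positive profile $\pa_y^2 u^p$ in the denominator — morally $g=\pa_y^2 u^p\,\pa_y\!\big(\pa_y^2 u/\pa_y^2 u^p\big)$ — designed to cancel the top-order contribution of the loss term, after which only a remainder controllable by the fractional smoothing hidden in the convex structure survives. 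Balancing the one-derivative transport loss against the $\sigma$-gain from $\dot\tau$ and this convexity-induced gain is precisely what pins the threshold $\sigma\ge\tfrac89$, i.e. Gevrey $\tfrac98$, and explains the sharpness of the hypothesis $\sigma\in[\tfrac89,1]$. The residual transport, commutator and source terms are then estimated by standard Gevrey product/commutator inequalities, and a Gronwall argument yields the bound (remainder energy) $\le C\e^2$ on $[0,T]$, the quadratic terms being reabsorbed through a continuity/bootstrap argument.

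To convert the a priori bound into the theorem, I would combine it with local well-posedness at fixed $\e$: for each $\e>0$ the system \eqref{eq:ANS} is parabolic, hence locally well-posed, and the uniform estimate prevents blow-up, so a continuation argument extends the solution to the whole interval $[0,T]$ and delivers $\|u\|,\ \|\e v\|=O(\e^2)$. The $L^2_{x,y}$ bound is immediate from the energy norm; since that norm controls $\pa_y u$ and $\pa_y^3 u$ together with all $x$-derivatives through the Gevrey weight, the $L^\infty_{x,y}$ bound follows from the one-dimensional Agmon inequality in $y$ and Sobolev embedding in $x$ at the same $\e^2$ rate. Uniqueness is obtained by rerunning the same linear estimate on the difference of two solutions in the $O(\e^2)$-ball around $u^p$.
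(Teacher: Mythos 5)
There is a genuine gap, and it sits at the exact point you wave away: the claim that ``no boundary-layer corrector is needed'' because $(u^R,v^R)$ has homogeneous Dirichlet data. The Dirichlet conditions on the \emph{velocity} error are indeed homogeneous, but the convexity mechanism (your good unknown, or the paper's hydrostatic trick of testing against $\om^R_\Phi/\pa_y\om^p$) must be run on a \emph{vorticity-type} quantity such as $\om^R=\pa_y u^R-\e^2\pa_x v^R$ or $\pa_y u$, and these have no usable boundary conditions: $\om^R|_{y=0,1}\neq 0$. When you integrate the dissipation $-\pa_y^2$ by parts in the weighted energy estimate, you pick up boundary terms involving $\pa_y\om^R|_{y=0,1}$, and the paper's Lemma 3.2 together with the elliptic representation shows these boundary values contain the nonlocal terms $\pa_x h^0,\pa_x h^1$, which lose a full horizontal derivative --- precisely the loss that Gevrey $\sigma<1$ regularity cannot absorb. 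This is why the paper (following \cite{GMV}) spends Sections 3--5 deriving the vorticity boundary conditions, introducing the heat-equation correctors $\om^{b,i}$ with Neumann data $\pa_x h^i$, and controlling the lift $\om^{bl}$ by the interior part $\om^{in}$; your scheme, which runs the energy estimate directly on $\pa_y u$ and $\pa_y^3 u$ with no corrector, cannot close at these boundary terms.

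A second, related omission: even after the corrector is inserted, the boundary contribution produces the term $\int_0^t\|\e|D|\om^{in}\|_{X^r}^2\,ds$, and the paper's stated ``key observation'' is the high--low frequency splitting at $N(\e)=[\e^{-2/(2-\sigma)}]$, with the high-frequency part of $(u^R,\e v^R)$ estimated by a separate energy estimate on the \emph{velocity} equations (Section 7), exploiting $\|P_{\ge N(\e)}f\|_{X^r}\le C\|P_{\ge N(\e)}\e f\|_{X^{r+1-\sigma/2}}$ to control $v^R$ without the one-derivative loss. Your proposal contains no substitute for this mechanism, and your justification of the threshold $\sigma\ge \f89$ (``balancing the transport loss against the $\sigma$-gain'') is too coarse to reproduce it: the constraint $\f89$ in the paper arises from specific exponents in the boundary-corrector estimates (e.g.\ $1-\f{\sigma}2+1-\f{5\sigma}4\le\f{\sigma}2$ and $3-3\sigma\le \f{\sigma}{2}$), all of which live in the part of the argument you have removed. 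The reduction of the nonlinearity to an ``$O(\e^4)$ perturbation of a stable linear problem'' is also optimistic --- the paper needs a genuine bootstrap with the nonlinear estimates of Proposition 8.1 interacting with both the vorticity and the high-frequency velocity energies --- but that part could plausibly be repaired; the missing boundary corrector and frequency decomposition cannot.
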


\begin{remark}
The range $\sigma\in [\f 89,1]$ should not be optimal. According to \cite{GGN}, the optimal range may be $[
\f 23,1]$. 
\end{remark}

\medskip

Let us sketch main ingredients of our proof and structure of this paper. \smallskip

\begin{itemize}

\item  {\bf Error equation}. In section 3, we introduce the error 
\begin{align*}
u^R=u^\e-u^p,\quad v^R=v^\e-v^p,\quad p^R=p^\e-p^p,
\end{align*}
which satisfy 
\begin{align*}
\left\{
\begin{aligned}
&\pa_t u^R-\Delta_\e u^R+\pa_x p^R+u^\e\pa_x u^R+u^R\pa_x u^p+v^\e\pa_y u^R+v^R\pa_y u^p-\e^2\pa_x^2 u^p=0,\\
&\e^2(\pa_t v^R-\Delta_\e v^R)+\pa_y p^R+\e^2(\pa_t v^p -\e^2 \pa_x^2 v^p-\pa_y^2 v^p+u^\e\pa_x v^\e+v^\e\pa_y v^\e)=0.
\end{aligned}
\right.
\end{align*}
The main difficulty comes from the term $v^R\pa_y u^p$, since $v^R$ is controlled via the relation $v^R=-\int_0^y\pa_xu^Rdy$,
which will lead to one derivative loss in $x$ variable. In \cite{PZZ}, the authors used the analyticity to overcome this difficulty.
For the data in the Gevrey class, we have to introduce new ideas.  

\item {\bf The vorticity formulation and hydrostatic trick}.
In \cite{GMV}, the authors introduced the vorticity formulation of \eqref{eq:HyNS}:  
\beno
\pa_t\om-\pa_y^2\om+u\pa_x\om+v\pa_y\om=0,\quad \om=\pa_y u.
\eeno
If we test $\om$ to this equation, then the term $v\pa_y\om$ still lose one derivative. 
In \cite{GMV}, the first key idea is to use the so called hydrostatic trick, i.e., test the vorticity equation by $\f {\om} {\pa_y\om}$,
which makes sense under the convex assumption. Indeed,  the trouble term vanishes due to 
\beno
\int_{\cS}v\pa_y\om \f \om {\pa_y\om}dxdy=\int_{\cS}v\om dxdy=0.
\eeno
However, the viscosity term $\pa_y^2\om$ will give rise to new difficulty due to $\om|_{y=0,1}\neq 0$.  The second key idea introduced in \cite{GMV} is to introduce the boundary corrector  $\om^b$ defined by
\beno
\pa_t\om^b-\pa_y^2\om^b=0,\quad \pa_y\om^b_{y=0,1}\approx -\pa_x\int^1_0u^2dy,
\eeno
and then use the hydrostatic trick for the equation of $\om^{in}=\om-\om^b$.

Motivated by \cite{GMV, Mae, WWZ}, we introduce the vorticity formulation of the error equations in section 3:
 \begin{align*}
\left\{
\begin{aligned}
&\pa_t \om^R-\tri_{\e}\om^R +f=N(\om^R, \om^R),\\
&(\pa_y+\e|D|) \om^R|_{y=0}=\pa_xh^0+\cdots,\\
&(\pa_y-\e|D|) \om^R|_{y=1}=\pa_xh^1+\cdots.
\end{aligned}
\right.
\end{align*}
Here $\pa_xh^0$ and $\pa_xh^1$ are the worst terms on the boundary. 
To handle them, we also introduce the boundary corrector in section 4:
\beno
\pa_t\om^{b,i}-\pa_y^2\om^{b,i}=0,\quad \pa_y\om^{b}|_{y=i}=\pa_xh^i.
\eeno
In section 5, we control the boundary corrector $\om^{bl}=\om^{b,0}+\om^{b,1}$ via the interior vorticity $\om^{in}=\om^R-\om^{bl}$.

\item {\bf Energy estimate for the interior vorticity}. In section 6, using the hydrostatic trick, we derive the following energy estimate:
\begin{align*}
&\sup_{s\in[0,t]}\|\om^{in}(s)\|_{X^r}^2+\int_{0}^t\|(\pa_y,\e\pa_x)\om^{in})\|_{X^r}^2ds+\beta\int_0^t\|\om^{in}\|_{X^{r+\f{\sigma}2}}^2ds\\
&\leq Ct \e^4+2\d\int_0^t\|\mathcal{N}\|_{X^{r-\f{\s}4}}^2ds+C\e^2\int_0^t\|P_{\geq N(\e)}(\pa_y,\e\pa_x)(u^R, \e v^R)\|_{X^{r+1}}^2ds+\cdots,
\end{align*}
where the third term on the right comes from the following boundary term in the energy estimate
\begin{align*}
\Big|\int_0^t\int_{\mathbb{T}}\e|D|\D^r\om^{R}_\Phi~\f{\D^r\om^{in}_\Phi}{\pa_y\om^p}|_{y=0,1}dxds\Big|,
\end{align*}
which is bounded by 
\begin{align*}
&\int_0^t\big(\|\e|D|\om^{in}\|_{X^{r}}+\|\e|D| \om^{bl}\|_{X^r}\big)\big(\|\pa_y\om^{in}\|_{X^{r}}+\|\om^{in}\|_{X^{r}}\big)\\
&\qquad+\big(\|\e|D|\om^{bl}\|_{X^{r-\f{\s}2}}+\|\e|D|\pa_y\om^{bl}\|_{X^{r-\f{\s}2}}\big)\|\om^{in}\|_{X^{r+\f{\s}2}}ds.
\end{align*}
New trouble is to control the  term $\int_0^t\|\e|D|\om^{in}\|_{X^{r}}^2dt$. For this, we need to make a high-low frequency decomposition for $\om^{in}$
so that 
\begin{align*}
&\int_0^t\|P_{\leq 2N(\e)}\e|D|\om^{in}\|_{X^{r}}^2ds\leq C\int_0^t\|\om^{in}\|_{X^{r+\f{\sigma}{2}}}^2ds
\end{align*}
and 
\begin{align*}
&\int_0^t\|P_{\geq N(\e)}\e|D|\om^{in}\|_{X^{r}}^2ds\leq C\e^2\int_0^t\|P_{\geq N(\e)}(\pa_y,\e\pa_x)(u^R, \e v^R)\|_{X^{r+1}}^2ds\\
&\qquad+\int_0^t\big(\|P_{\geq N(\e)}(\pa_y,\e\pa_x)(u^R, \e v^R)\|_{X^{r+1-\sigma}}^2+\|\om^{in}\|_{X^{r+\f{\sigma}2}}^2\big)ds.
\end{align*}
 This decomposition is the key observation of this paper, which is motivated by the fact that 
\beno
\|P_{\ge N(\e)}f\|_{X^r}\le C\|P_{\ge N(\e)}\e f\|_{X^{r+1-\f \sigma 2}},
\eeno
which is very useful for the control of $v^R$ instead of the usual control $\|v^R\|_{X^r}\le \|u^R\|_{X^{r+1}}$(losing one derivative).

\item {\bf Energy estimate for the velocity}. 
In section 7, we derive the following energy estimate:
\begin{align*}
&\e^2\|P_{\geq N(\e)}(u^R,\e v^R)(t)\|_{X^{r+1}}^2+\beta \e^2 \int_0^t\|P_{\geq N(\e)}(u^R,\e v^R)\|_{X^{r+1+\f{\s}{2}}}^2\\
&\qquad +\int_0^t\e^2\|P_{\geq N(\e)}(\pa_y,\e\pa_x)( u^{R},\e v^R)\|_{X^{r+1}}^2\\
&\leq  C\int_0^t \|\om^{in}\|_{X^{r+\f{\s}2}}^2ds+\d\int_0^t\|P_{\geq N(\e)}(\mathcal{N}_u,\e \mathcal{N}_v)\|_{X^{r+1-\f{\s}2}}^2ds.
\end{align*}

\item {\bf Nonlinear estimates and bootstrap argument}. In section 8, we make the nonlinear estimates for $(\mathcal{N}, \mathcal{N}_u,\e \mathcal{N}_v)$. Based on the energy estimates for $(\om^R, u^R, v^R)$ and nonlinear estimates, we close our energy estimates by using a standard bootstrap argument in section 9.
\end{itemize}

\medskip

Throughout this paper,  we denote by $C$ a constant independent of $\e,\beta$. We denote by {$N(\e)=[\e^{-\f 2 {2-\sigma}}]$} an integer.

\section{Gevrey class and Elliptic equation in a strip}

\subsection{Some estimates in Gevrey class}

Let us define
\begin{align}\label{def: tau}
f_\Phi=\mathcal{F}^{-1}(e^{\Phi(t,k)}\widehat{f}(k))=e^{\Phi(t,D)}f,\quad \Phi(t,k)\eqdef\tau(t)\langle k\rangle^{\sigma}.
\end{align}
 Obviously, for $\sigma\in [0,1]$ and $\tau(t)\ge 0$, $\Phi(t,k)$ satisfies the subadditive inequality
\begin{align}\label{eq:subadditive}
\Phi(t,k)\leq \Phi(t,k-\ell)+\Phi(t,\ell).
\end{align}
Then we have
\begin{align*}
\|f\|_{X^r_{\sigma,\tau}}=\|f_\Phi\|_{H^{r,0}}.
 \end{align*}
 For functions which only depend on variable $x$, we denote
\beno
|f|_{X^r_{\sigma,\tau}}=\|f_\Phi\|_{H^r_x(\mathbb{T})}.
\eeno
It is easy to see that if $r'\geq r,$ then $\|\cdot\|_{X^{r'}_{\sigma,\tau}}\geq \|\cdot\|_{X^{r}_{\sigma,\tau}}.$ 
For simplicity, we drop subscript $\sigma,\tau$ in the notations $\|f\|_{X^r_{\sigma,\tau}}, |f|_{X^r_{\sigma,\tau}}$ etc.
We say that  a function $f$ belongs to Gevrey class $\f 1 \sigma$ if $\|f\|_{X^r_{\sigma,\tau}}<+\infty$. When $\sigma=1$, the function
is analytic. In the sequel, we always take 
\beno
\tau(t)=\tau_0e^{-\beta t}, \quad \tau_0>0,\quad  \beta\ge 1(\text{to be determined later}).
\eeno

We introduce the frequency cut-off operators $P_{\geq N}$ and $P_{\leq N}$, which are defined by
\begin{align}\label{def: P_N}
P_{\geq N}f(x)=\f 1 {2\pi}\sum_{k\in \Z}\chi\big(\f{k}{N}\big)\widehat{f}(k)e^{ikx},\quad P_{\leq N}f=\big(1-P_{\geq N-1}\big)f.
\end{align}
Here  function $\chi $ is a  smooth even function with $\rm{supp}\chi\in[\f12,+\infty)\cup (-\infty,-\f12]$ and $\chi(x)=1$ for $|x|\geq1$.

\begin{lemma}\label{lem:com-S}
Let $r\ge 0,~s_1>\f32$, $s>\f12$ and $0\leq\delta\leq 1$. Then it holds that
\begin{align*}
&\big\|[\langle D\rangle^r,f]\pa_xg\big\|_{L^2_x}\leq C\|f\|_{H^{s_1}_x}\|g\|_{H^r_x}+C\|f\|_{H^{r+1-\d}_x}\|g\|_{H^{s+\d}_x},\\
&\big\|[P_{\geq N},f]\pa_xg\big\|_{H^r_x}\leq C\|f\|_{H^{s_1}_x}\|P_{\geq \f{N}{2}}g\|_{H^{r}_x}+C\|f\|_{H^{r+1-\d}_x}\|g\|_{H^{s+\d}_x}.
\end{align*}

\end{lemma}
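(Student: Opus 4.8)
The plan is to work entirely on the Fourier side of $\mathbb{T}$ and to reduce both estimates to a symbol bound combined with discrete Young and Sobolev inequalities. Writing $f(x)=\sum_k\widehat f(k)e^{ikx}$, the commutator has Fourier coefficients
\[
\widehat{[\langle D\rangle^r,f]\pa_x g}(k)=\sum_\ell\widehat f(k-\ell)\,i\ell\,\widehat g(\ell)\big(\langle k\rangle^r-\langle \ell\rangle^r\big),
\]
so by Plancherel the task is to estimate the $\ell^2_k$ norm of the right-hand side. The mechanism is to split the $\ell$-sum according to whether $f$ sits at low or high frequency relative to $g$, namely into the regions $A=\{|k-\ell|\le\frac12|\ell|\}$ and $B=\{|k-\ell|>\frac12|\ell|\}$; the first will produce the term $\|f\|_{H^{s_1}}\|g\|_{H^r}$ and the second the term $\|f\|_{H^{r+1-\delta}}\|g\|_{H^{s+\delta}}$.

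First I would record the two symbol estimates. On $A$ the frequencies are comparable, $\langle k\rangle\approx\langle\ell\rangle$, and the mean value theorem applied to $\xi\mapsto\langle\xi\rangle^r$ gives $|\langle k\rangle^r-\langle\ell\rangle^r|\le C|k-\ell|\langle\ell\rangle^{r-1}$, whence, after absorbing the derivative $|\ell|$, the effective weight is $\le C|k-\ell|\langle\ell\rangle^r$. On $B$ one has $\langle k\rangle,\langle\ell\rangle\le C\langle k-\ell\rangle$, so the crude bound $|\langle k\rangle^r-\langle\ell\rangle^r|\le\langle k\rangle^r+\langle\ell\rangle^r\le C\langle k-\ell\rangle^r$ suffices. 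For region $A$ I would then invoke Young's inequality $\ell^1\ast\ell^2\to\ell^2$ with $F(m)=|m|\,|\widehat f(m)|$ and $G(\ell)=\langle\ell\rangle^r|\widehat g(\ell)|$; the key point is that $\|F\|_{\ell^1}\le C\|f\|_{H^{s_1}}$ precisely because $s_1>\frac32$ makes $\sum_m|m|^2\langle m\rangle^{-2s_1}$ finite, yielding the first term. For region $B$, writing the summand as $\big[\langle k-\ell\rangle^{r+1-\delta}|\widehat f(k-\ell)|\big]\big[\langle\ell\rangle^{s+\delta}|\widehat g(\ell)|\big]\,w(k,\ell)$ with $w=\langle k-\ell\rangle^{\delta-1}|\ell|\langle\ell\rangle^{-s-\delta}$, the constraints $\delta\le1$ and $\langle\ell\rangle\le C\langle k-\ell\rangle$ give $w\le C\langle\ell\rangle^{-s}$; loading this gain onto the $g$-factor and using $s>\frac12$ to place $\langle\ell\rangle^{-s}\langle\ell\rangle^{s+\delta}|\widehat g(\ell)|$ into $\ell^1$, a second application of Young closes the second term.

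For the projection commutator the same decomposition works, now carrying the output weight $\langle k\rangle^r$ and the symbol $m(k,\ell)=\chi(k/N)-\chi(\ell/N)$, for which I would use two facts: $|m(k,\ell)|\le C\min(1,|k-\ell|/N)$, by Lipschitz continuity of $\chi(\cdot/N)$, and that $m\ne0$ forces $\max(|k|,|\ell|)\gtrsim N$, since $\chi$ is constant off its transition annulus. On region $B$ one bounds $|m|\le C$ and $\langle k\rangle^r\le C\langle k-\ell\rangle^r$, reducing to exactly the region-$B$ estimate above and producing the second term. On region $A$, comparability forces $|\ell|\gtrsim N$, which lets me replace $\widehat g(\ell)$ by $\widehat{P_{\ge N/2}g}(\ell)$ (up to the constant furnished by the positive lower bound of $\chi$ on its transition region) and, combined with $|m|\le C|k-\ell|/N$ and $|\ell|\lesssim N$, gives $|m|\,|\ell|\le C|k-\ell|$; Young's inequality then yields $C\|f\|_{H^{s_1}}\|P_{\ge N/2}g\|_{H^r}$.

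The step I expect to be most delicate is the weight bookkeeping in region $B$: one must distribute the single factor $\langle k-\ell\rangle^r|\ell|$ so that $f$ lands in $H^{r+1-\delta}$ and $g$ in $H^{s+\delta}$ uniformly over the whole range $0\le\delta\le1$, and the mechanism making this possible is the pairing of $\delta\le1\Rightarrow\langle k-\ell\rangle^{\delta-1}\le C\langle\ell\rangle^{\delta-1}$ with the $s>\frac12$ summability. For the projection estimate, the comparable-frequency localization—verifying that the commutator genuinely sees only $g$-frequencies above $cN$, so that $P_{\ge N/2}$ may legitimately be inserted—is the one place where the precise support and positivity of $\chi$ must be used carefully.
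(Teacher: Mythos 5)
Your proposal is correct and takes essentially the same route as the paper's proof: Plancherel, the identical two-region split (comparable frequencies $|k-\ell|\le|\ell|/2$ versus the complement), a mean-value/Lipschitz bound on the cutoff symbol together with the support localization that forces $|\ell|\approx N$ and justifies inserting $P_{\ge N/2}$, and Young's inequality exploiting $s_1>\frac32$ and $s>\frac12$ (including the same mild reliance on $\chi$ being bounded below on its transition region, which the paper's own argument also uses implicitly). The only difference is that you also write out the first (classical) commutator estimate in full, which the paper simply cites from \cite{BCD}.
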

\begin{proof}
The first inequality is classical(see \cite{BCD} for example). Here we only present the proof for the second one.

Thanks to Plancherel formula, we have
\begin{align*}
\big\|[P_{\geq N},f]\pa_xg\big\|_{H^r_x}=&\big\|\langle k\rangle^r\big(\chi(\f{k}{N})(\widehat{f}*\widehat{\pa_xg})-\widehat{f}*(\chi(\f{\cdot}{N})\widehat{\pa_xg}\big)\big\|_{\ell^2_k}\\
\leq&\big\|\langle k\rangle^r\sum_{\ell\in\Z}\big(\chi(\f{k}{N})-\chi(\f{\ell}{N})\big)|\ell||\widehat{f}(k-\ell)| |\widehat{g}(\ell)|\big\|_{\ell^2_k}.
\end{align*}
We consider two cases. For  $|\ell|\leq 2|k-\ell|$, we have
\begin{align*}
&\langle k\rangle^r|\ell|\leq C\langle \ell\rangle^\d\langle k-\ell\rangle^{r+1-\d},
\end{align*}
which implies that
\begin{align*}
\Big\|\langle k\rangle^r\sum_{|\ell|\le 2|k-\ell|}\big(\chi(\f{k}{N})-\chi(\f{\ell}{N})\big)|\ell||\widehat{f}(k-\ell)| |\widehat{g}(\ell)|\Big\|_{\ell^2_k}\leq C\|f\|_{H^{r+1-\d}_x}\|g\|_{H^{s+\d}_x}.
\end{align*}
Here we used $s>\f12$.
If  $|\ell|\geq 2|k-\ell|$, then we have
\begin{align*}
\f{|\ell|}{2}\leq |k|\leq \f{3|\ell|}{2},
\end{align*}
and $k,\ell$ must have the same sign. Using the mean value theorem, we get
\begin{align*}
\chi\big(\f{k}{N}\big)-\chi\big(\f{\ell}{N}\big)=\f{1}{N}\chi'\big(\f{\xi}{N}\big)(k-\ell),
\end{align*}
where $\xi$ is some point  between $k$ and $\ell.$ In this case, we have
\begin{align*}
|\ell| \Big|\chi(\f{k}{N})-\chi(\f{\ell}{N})\Big|\leq & \chi(\f{\ell}{N/2})\big|\f{\xi}{N}\chi'(\f{\xi}{N})\big||k-\ell|\f{|\ell|}{|\xi|}\\
\leq& C\chi(\f{\ell}{N/2})|k-\ell|.
\end{align*}
Therefore, we obtain
\begin{align*}
\Big\|\langle k&\rangle^r\sum_{|\ell|\ge 2|k-\ell|}(\chi(\f{k}{N})-\chi(\f{\ell}{N}))|\ell||\widehat{f}(k-\ell)| |\widehat{g}(\ell)|\Big\|_{\ell^2_k}\\
\leq &\Big\|\sum_{\ell\in\mathbb{Z}}|k-\ell||\widehat{f}(k-\ell)\langle\ell\rangle^r |\chi(\f{\ell}{N/2})|\widehat{g}(\ell)|\Big\|_{\ell_k^2}\\
\leq& C\|f\|_{H^{s_1}_x}\|P_{\geq \f{N}{2}}g\|_{H^{r}_x}.
\end{align*}
Here we used $s_1>\f32.$ This shows the second inequality. 
\end{proof}

\begin{lemma}\label{lem:product-Gev}
Let $r\ge 0$ and $s> \f12$. Then  for any $N\in \Z_+$,
\beno
&& |fg|_{X^r}\leq C  |f|_{X^{s}}   |g |_{X^r}  +  C |f |_{X^r}  |g |_{X^{s}},\\
 &&|P_{\geq N}(f g)  |_{X^r}\leq C|f|_{X^s}  |P_{\geq \f{N}{2}}g |_{X^r}+C|P_{\geq \f{N}{2}}f |_{X^r}  |g |_{X^{s}}.
 \eeno
\end{lemma}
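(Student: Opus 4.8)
The plan is to deduce both inequalities from their classical Sobolev (tame product) analogues in $H^r_x(\mathbb{T})$, transporting them to the Gevrey setting by means of the subadditivity \eqref{eq:subadditive}. The mechanism is to pass to Fourier majorants: since $\widehat{fg}(k)=\sum_{\ell}\widehat{f}(k-\ell)\widehat{g}(\ell)$ and $\Phi(t,k)\le\Phi(t,k-\ell)+\Phi(t,\ell)$, the triangle inequality gives
\begin{align*}
e^{\Phi(t,k)}|\widehat{fg}(k)|\le\sum_{\ell}e^{\Phi(t,k-\ell)}|\widehat{f}(k-\ell)|\,e^{\Phi(t,\ell)}|\widehat{g}(\ell)|=\sum_{\ell}|\widehat{f_\Phi}(k-\ell)|\,|\widehat{g_\Phi}(\ell)|.
\end{align*}
Hence it suffices to prove the stated bounds for the auxiliary functions whose Fourier coefficients are the nonnegative majorants $|\widehat{f_\Phi}|,|\widehat{g_\Phi}|$, for which the weighted $\ell^2_k$ norms are exactly $|f|_{X^r},|g|_{X^r}$ and their $H^s$ versions. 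In other words, everything reduces to a product estimate in $H^r_x(\mathbb{T})$, and the Gevrey weight never reappears afterward.

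First I would prove the first inequality. Using $\langle k\rangle^r\le C\big(\langle k-\ell\rangle^r+\langle\ell\rangle^r\big)$ I split the convolution into a part carrying $\langle k-\ell\rangle^r$ on the $f$ factor and a part carrying $\langle\ell\rangle^r$ on the $g$ factor. Taking the $\ell^2_k$ norm and using Young's inequality $\ell^2\ast\ell^1\subset\ell^2$, each piece becomes a weighted $\ell^2$ norm of one factor times the $\ell^1$ norm of the Fourier coefficients of the other. The place where $s>\f12$ enters is the embedding $\|\widehat{g_\Phi}\|_{\ell^1}\le C|g|_{X^s}$, which follows from Cauchy--Schwarz together with $\sum_{\ell}\langle\ell\rangle^{-2s}<+\infty$. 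This yields $|fg|_{X^r}\le C|f|_{X^s}|g|_{X^r}+C|f|_{X^r}|g|_{X^s}$.

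For the second, localized inequality the only new feature is the output multiplier $\chi(k/N)$, which is supported on $|k|\ge\f N2$. In the convolution sum I would split according to $|\ell|\ge|k-\ell|$ and $|\ell|<|k-\ell|$; in the first region one has $|\ell|\ge\f{|k|}{2}\ge\f N4$ and $\langle k\rangle^r\le C\langle\ell\rangle^r$, while in the second region the roles of $f$ and $g$ are exchanged. Thus a nonzero high-frequency output forces the dominant factor to live at frequency $\gtrsim N$, which is precisely captured by $P_{\geq \f{N}{2}}$ applied to that factor; repeating the Young/$\ell^1$ argument of the previous step then produces the two terms $C|f|_{X^s}|P_{\geq \f{N}{2}}g|_{X^r}$ and $C|P_{\geq \f{N}{2}}f|_{X^r}|g|_{X^s}$. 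This is entirely parallel to the treatment of the case $|\ell|\ge2|k-\ell|$ in Lemma \ref{lem:com-S}. The main, and really the only, point requiring care is the frequency bookkeeping: one must match the support of $\chi(k/N)$ on the output with the support of the cut-off $\chi(\cdot/(N/2))$ defining $P_{\geq \f{N}{2}}$ on the inputs, the delicate configuration being the resonant high--high interaction in which both factors sit at comparable frequencies $\gtrsim N$; this case is in fact favorable, since either factor may then be bounded by its $P_{\geq \f{N}{2}}$ projection.
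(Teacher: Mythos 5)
Your treatment of the first inequality is correct and is essentially the paper's own argument: both reduce to the Fourier majorants (the paper's $f^{+}_\Phi,g^{+}_{\Phi}$, your $|\widehat{f_\Phi}|,|\widehat{g_\Phi}|$) via the subadditivity \eqref{eq:subadditive}; after that the paper cites the classical Sobolev product estimate while you reprove it through $\langle k\rangle^r\leq C(\langle k-\ell\rangle^r+\langle\ell\rangle^r)$, Young's inequality and the $\ell^1$-embedding furnished by $s>\f12$. That difference is cosmetic.

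For the second inequality there is a genuine gap at the decisive step. Your argument is purely support-based: ``a nonzero high-frequency output forces the dominant factor to live at frequency $\gtrsim N$, which is precisely captured by $P_{\geq\f N2}$.'' But $P_{\geq \f N2}$ does \emph{not} capture all frequencies $\geq \f N4$: its symbol $\chi(\f{2\ell}{N})$ is supported on $|\ell|\geq \f N4$ yet equals $1$ only for $|\ell|\geq \f N2$, and it vanishes at the edge $|\ell|=\f N4$. In your first region ($|\ell|\geq|k-\ell|$, hence $|\ell|\geq\f{|k|}{2}\geq\f N4$ on the output support) the dominant frequency can sit anywhere in $[\f N4,\f N2)$, where $|\widehat{g}(\ell)|$ is \emph{not} controlled by $|\mathcal{F}(P_{\geq \f N2}g)(\ell)|=\chi(\f{2\ell}{N})|\widehat g(\ell)|$. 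Support bookkeeping alone cannot close this case: for a cutoff $\chi$ satisfying the stated hypotheses but non-monotone on $[\f12,1]$ (say $\chi(0.6)=1$, $\chi(0.7)=\chi(0.5)=0$), placing $\widehat f$ at the single frequency $k-\ell=\f N4$ and $\widehat g$ at $\ell=0.35N$ makes the right-hand side vanish while the left-hand side does not, so any proof must use more than supports. What is needed --- and what the paper states and uses --- is the pointwise multiplier domination $\chi(\f kN)\leq\chi(\f{\ell}{N/2})$ whenever $|\ell|$ is the dominant frequency (and symmetrically with $k-\ell$), valid because the cutoff is implicitly monotone on $[\f12,1]$: one keeps the output symbol $\chi(\f kN)$ and dominates it by the input symbol, rather than discarding it and reinstating an input cutoff from support considerations. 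Once this inequality is inserted, your Young/$\ell^1$ scheme goes through verbatim. Note also that the delicate configuration is not the high--high resonant interaction you single out (there both symbols equal $1$ and nothing tapers); it is the edge case $|k|\approx\f N2$, $|\ell|\approx\f N4$.
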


\begin{proof}
By Plancherel formula, we have
\begin{align*}
|fg|_{X^r}=\big\|\langle k\rangle^re^{\Phi(t,k)}(\widehat{f}*\widehat{g})(k)\big\|_{\ell^2_k}.
\end{align*}
We get by \eqref{eq:subadditive} that
\begin{align*}
\Big|e^{\Phi(t,k)}(\widehat{f}*\widehat{g})(k)\Big|\leq& \sum_{\ell\in \mathbb{Z}}e^{\Phi(t,k-\ell)}|\widehat{f}|(k-\ell)e^{\Phi(t,\ell)}|\widehat{g}|(\ell)\\
=&\sum_{\ell\in \mathbb{Z}}\widehat{f^{+}_\Phi}(k-\ell) \widehat{g^{+}_{\Phi}}(\ell)=\widehat{f^{+}_\Phi }*\widehat{g^{+}_{\Phi}}=\mathcal{F}(f^{+}_\Phi g^{+}_{\Phi}),
\end{align*}
where we denote $f^{+}$ by  the Fourier transformation inverse of $|\widehat{f}|$ and so does $g^{+}$.
Then by the classical product estimate in Sobolev space, we obtain
\begin{align*}
|fg|_{X^r}\leq \|f^{+}_\Phi g^{+}_{\Phi}\|_{H^r_x}\leq& C\big(\|f^{+}_\Phi\|_{H^s_x}\|g^{+}_\Phi\|_{H^r_x}+\|f^{+}_\Phi\|_{H^r_x}\|f^{+}_\Phi\|_{H^s_x}\big)\\
=&C\big(|f|_{X^{s}}   |g |_{X^r} + |f |_{X^r}  |g |_{X^{s}}\big),
\end{align*}
where we use the fact that map $f_\Phi\mapsto
f^+_\Phi$ preserves the $L^2_x$ norm and $s>\f12$.

For the second one, we use the fact that if $2|k-\ell|\geq \ell$, then
\beno
\langle k\rangle^r\leq C\langle k-\ell\rangle^r,\quad \chi(\f{k}{N})\leq\chi(\f{k-\ell}{N/2}),
\eeno
and if $|\ell|\geq 2|k-\ell|$, then 
\beno
\langle k\rangle^r\leq C_r\langle \ell\rangle^r,\quad \chi(\f{k}{N})\leq \chi(\f{\ell}{N/2}).
\eeno
Then as in the argument for the first inequality, we have
\begin{align*}
|P_{\geq N}(f g)  |_{X^r}\le& C\|g^{+}_\Phi P_{\geq\f{N}{2}}\langle D_x\rangle^r f^{+}_\Phi\|_{L^2}+C\|f^{+}_\Phi P_{\geq\f{N}{2}}\langle D_x\rangle^r g^{+}_\Phi\|_{L^2}\\
\le&C\big(|f|_{X^s}  |P_{\geq \f{N}{2}}g |_{X^r}+|P_{\geq \f{N}{2}}f |_{X^r}  |g |_{X^{s}}\big).
\end{align*}
This shows the second inequality. 
\end{proof}

\begin{lemma}\label{lem:com-Gev}
Let $r\ge 0,~s_1>\f32$, $s>\f12$ and $0\leq\delta\leq 1$. Then it holds that
\begin{align*}
&\|(f\pa_xg)_{\Phi}-f\pa_xg_{\Phi}\|_{H^r_x}\leq C|f|_{X^{s_1}}|g|_{X^{r+\sigma}}+C|f|_{X^{r+1-\delta}}|g|_{X^{s+\delta}},\\
&\|P_{\geq N}(f\pa_xg)_{\Phi}-f(P_{\geq N}\pa_xg_{\Phi})\|_{H^r_x}\leq C|f|_{X^{s_1}}|P_{\geq \f{N}{2}}g|_{X^{r+\sigma}}+C|f|_{X^{r+1-\delta}}|g|_{X^{s+\delta}},
\end{align*}
for any $N\in\mathbb{Z}_+.$
\end{lemma}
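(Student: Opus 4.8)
The plan is to compute the Fourier coefficients of the Gevrey commutator and reduce the whole estimate to a convolution bound in the spirit of Lemma \ref{lem:com-S}. Writing $h=\pa_x g$ and using that $\pa_x$ commutes with $e^{\Phi(t,D)}$, the quantity is $(f\pa_x g)_\Phi-f\pa_x g_\Phi=e^{\Phi(t,D)}(fh)-fe^{\Phi(t,D)}h$, whose Fourier coefficients are
\[
\sum_{\ell}\widehat f(k-\ell)\,(i\ell)\,\widehat g(\ell)\big(e^{\Phi(t,k)}-e^{\Phi(t,\ell)}\big).
\]
Substituting $\widehat f(k-\ell)=e^{-\Phi(t,k-\ell)}\widehat{f_\Phi}(k-\ell)$ and $\widehat g(\ell)=e^{-\Phi(t,\ell)}\widehat{g_\Phi}(\ell)$ and invoking the subadditivity \eqref{eq:subadditive} to bound $e^{\Phi(t,k)-\Phi(t,k-\ell)-\Phi(t,\ell)}\le 1$, I would reduce matters (absorbing $\tau(t)\le\tau_0$ into $C$) to controlling
\[
\Big\|\langle k\rangle^{r}\sum_{\ell}\big|\langle k\rangle^{\sigma}-\langle \ell\rangle^{\sigma}\big|\,\big|\widehat{f_\Phi}(k-\ell)\big|\,|\ell|\,\big|\widehat{g_\Phi}(\ell)\big|\Big\|_{\ell^2_k}.
\]
Everything then hinges on the two elementary inequalities $|\langle k\rangle^{\sigma}-\langle\ell\rangle^{\sigma}|\le\langle k-\ell\rangle^{\sigma}$ (subadditivity of $x\mapsto x^{\sigma}$) and the near-diagonal mean value bound $|\langle k\rangle^{\sigma}-\langle\ell\rangle^{\sigma}|\le C\langle k-\ell\rangle\min(\langle k\rangle,\langle\ell\rangle)^{\sigma-1}$.

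I would then split the sum exactly as in the proof of Lemma \ref{lem:com-S}, into the regions $|\ell|\ge 2|k-\ell|$ and $|\ell|\le 2|k-\ell|$. On the first region $f$ sits at low frequency and $\langle k\rangle\approx\langle\ell\rangle$, so the mean value bound yields $\langle\ell\rangle^{r}\langle\ell\rangle^{\sigma-1}|\ell|\le\langle\ell\rangle^{r+\sigma}$ with one spare factor $\langle k-\ell\rangle$ on $f$; estimating the resulting convolution by Young's inequality with the $f$-factor in $\ell^1$ (which uses $s_1>\f32$) produces the first term $C|f|_{X^{s_1}}|g|_{X^{r+\sigma}}$, the crucial near-diagonal gain being that $\langle\ell\rangle^{\sigma-1}|\ell|$ costs only $\sigma$ rather than one derivative on $g$. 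On the second region $f$ carries the high frequency, $\langle k\rangle\le C\langle k-\ell\rangle$ and $\langle\ell\rangle\le C\langle k-\ell\rangle$; here I would use the crude bound $|\langle k\rangle^{\sigma}-\langle\ell\rangle^{\sigma}|\le\langle k-\ell\rangle^{\sigma}$, transfer part of the derivative $|\ell|$ from $g$ onto $f$ via $\langle\ell\rangle\le C\langle k-\ell\rangle$, and keep the $g$-factor in $\ell^1$ (using $s>\f12$) to reach the second term $C|f|_{X^{r+1-\delta}}|g|_{X^{s+\delta}}$.

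The step I expect to be the main obstacle is precisely this second region. Unlike the purely Sobolev commutator of Lemma \ref{lem:com-S}, where the commuted symbol $\langle k\rangle^r-\langle\ell\rangle^r$ is controlled by exactly $\langle k-\ell\rangle^r$, here the exponential weight difference deposits an \emph{extra} factor $\langle k-\ell\rangle^{\sigma}$ on the high-frequency argument, and arranging the derivative bookkeeping so that this loss lands on the stated norms — rather than costing a spurious $\sigma$ derivatives in $f$ or $g$ — is the delicate point; it is the analogue, off the diagonal, of the cancellation that made the first term clean, and is where the refined mean value bound must be played against the crude one. Finally, the $P_{\ge N}$ version follows the same two-region scheme: inserting the cutoff $\chi(k/N)$ and using the comparisons $\chi(k/N)\le\chi(\ell/(N/2))$ on $|\ell|\ge 2|k-\ell|$ and $\chi(k/N)\le\chi((k-\ell)/(N/2))$ on $|\ell|\le 2|k-\ell|$ — exactly as in Lemmas \ref{lem:com-S} and \ref{lem:product-Gev} — reproduces the localization $P_{\ge N/2}g$ in the first term and leaves the second term unchanged.
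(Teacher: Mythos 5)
Your Fourier-side setup and your treatment of the near-diagonal region $|\ell|\ge 2|k-\ell|$ coincide with the paper's proof, but there is a genuine gap in the off-diagonal region $|\ell|\le 2|k-\ell|$ --- precisely the step you flag as ``the main obstacle'' and then leave unresolved. The gap is created by your very first reduction. After the substitution $\widehat f(k-\ell)=e^{-\Phi(t,k-\ell)}\widehat{f_\Phi}(k-\ell)$, $\widehat g(\ell)=e^{-\Phi(t,\ell)}\widehat{g_\Phi}(\ell)$, the multiplier to be estimated is
\[
m(k,\ell)=\big(e^{\Phi(t,k)}-e^{\Phi(t,\ell)}\big)e^{-\Phi(t,k-\ell)-\Phi(t,\ell)},
\]
and you bound it \emph{globally} by $C\,|\langle k\rangle^\sigma-\langle\ell\rangle^\sigma|$ (mean value theorem on the exponential, then subadditivity to kill $e^{\max(\Phi(t,k),\Phi(t,\ell))-\Phi(t,k-\ell)-\Phi(t,\ell)}$). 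That bound is correct but lossy, and the loss is irrecoverable afterwards: on $|\ell|\le 2|k-\ell|$ the quantity $|\langle k\rangle^\sigma-\langle\ell\rangle^\sigma|$ genuinely reaches size $\langle k-\ell\rangle^\sigma$ (take $\ell=0$, $|k|$ large), so from your reduced expression the best available weight is $\langle k\rangle^r|\ell|\,\langle k-\ell\rangle^{\sigma}\le C\langle k-\ell\rangle^{r+1-\delta+\sigma}\langle\ell\rangle^{\delta}$, yielding $C|f|_{X^{r+1-\delta+\sigma}}|g|_{X^{s+\delta}}$ instead of the stated $C|f|_{X^{r+1-\delta}}|g|_{X^{s+\delta}}$. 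Re-tuning $\delta$ only moves the $\sigma$ loss onto $g$; it cannot be removed. Since the lemma is applied in the paper with $\sigma$ close to $1$ and with sharp index bookkeeping (e.g.\ in Lemma \ref{lem:h01-est}), this weaker statement is not sufficient.

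The repair is exactly where the paper's argument differs from yours: one must never replace $m(k,\ell)$ by the phase difference in the off-diagonal region. Subadditivity \eqref{eq:subadditive} gives $e^{\Phi(t,k)}e^{-\Phi(t,k-\ell)-\Phi(t,\ell)}\le 1$, and trivially $e^{\Phi(t,\ell)}e^{-\Phi(t,k-\ell)-\Phi(t,\ell)}=e^{-\Phi(t,k-\ell)}\le 1$, hence $|m(k,\ell)|\le C$ \emph{uniformly in $k,\ell$}. Using this crude bound on $|\ell|\le 2|k-\ell|$, the weight is only $\langle k\rangle^r|\ell|\le C\langle k-\ell\rangle^{r+1-\delta}\langle\ell\rangle^{\delta}$, which gives the stated term with no $\sigma$ loss; the refined bound $m(k,\ell)\le C\langle\ell\rangle^{\sigma-1}|k-\ell|$ is reserved for the near-diagonal region, where your computation is already correct. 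In other words, the interplay of refined versus crude bounds must be played at the level of the exponential multiplier $m(k,\ell)$ itself; its uniform boundedness is precisely the structural information your global reduction throws away. (For the second inequality the paper proceeds by splitting into $[P_{\ge N},f\pa_x]g_\Phi$, handled by Lemma \ref{lem:com-S}, plus $P_{\ge N}[e^{\Phi},f\pa_x]g$, handled by the first inequality together with $\chi(\f{k}{N})\le\chi(\f{k-\ell}{N/2})+\chi(\f{\ell}{N/2})$; your direct insertion of the cutoffs into the two-region scheme would also work once the off-diagonal estimate is repaired as above.)
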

\begin{proof}
By Plancherel formula and the argument in Lemma \ref{lem:product-Gev}, we have
\begin{align*}
\|(f\pa_xg)_{\Phi}-f\pa_xg_{\Phi}\|_{H^r_x}
\leq&\Big\|\langle k\rangle^r\sum_{\ell\in \mathbb{Z}}m(k,\ell)|\ell|\widehat{f^{+}_\Phi}(k-\ell) \widehat{g^{+}_{\Phi}}(\ell)\Big\|_{\ell^2_k},
\end{align*}
where $m(k,\ell)=(e^{\Phi(t,k)}-e^{\Phi(t,\ell)})e^{-\Phi(t,k-\ell)-\Phi(t,\ell)}.$
For $|\ell|\le 2|k-\ell|$, we have
\beno
\langle k\rangle^r|\ell|\leq C\langle k-\ell\rangle^{r+1-\d}|\ell|^{\d},\quad m(k,\ell)\leq C,
\eeno
which imply that 
\begin{align*}
\Big\|\langle k\rangle^r\sum_{|\ell| \le 2|k-\ell|}m(k,\ell)\widehat{f^{+}_\Phi}(k-\ell) |\ell|\widehat{g^{+}_{\Phi}}(\ell)\Big\|_{\ell^2_k}\leq C\|f^+_\Phi\|_{H^{r+1-\d}_x}\|g^{+}_{\Phi}\|_{H^{s+\d}_x}\leq C|f|_{X^{r+1-\delta}}|g|_{X^{s+\delta}}.
\end{align*}
For $|\ell|\geq 2|k-\ell|$, we have
\beno
\langle k\rangle^r\leq C\langle \ell\rangle^r,\quad
m(k,\ell)\leq C\langle \ell\rangle^{\sigma-1}|k-\ell|,
\eeno
which imply that
\begin{align*}
\Big\|\langle k\rangle^r\sum_{|\ell|\ge 2|k-\ell|}m(k,\ell)\widehat{f^{+}_\Phi}(k-\ell) |\ell|\widehat{g^{+}_{\Phi}}(\ell)\Big\|_{\ell^2_k}\leq C\|\pa_xf^+_\Phi\|_{H^{s}_x}\|g^{+}_{\Phi}\|_{H^{r+\sigma}_x}\leq C|f|_{X^{s_1}}|g|_{X^{r+\sigma}}.
\end{align*}
This shows the first inequality.\smallskip

For the second one, we have
\begin{align*}
\|P_{\geq N}(f\pa_xg)_{\Phi}-f(P_{\geq N}\pa_xg_{\Phi})\|_{H^r_x}\leq& \|[P_{\geq N},f\pa_x]g_\Phi\|_{H^r_x}+\|P_{\geq N}[e^{\Phi},f\pa_x]g\|_{H^r_x}\\
=& I_1+I_2.
\end{align*}
It follows from Lemma \ref{lem:com-S} that 
\begin{align*}
I_1\leq C|f|_{X^{s_1}}|P_{\geq \f{N}{2}}g|_{X^r}+C|f|_{X^{r+1-\delta}}|g|_{X^{s+\delta}}.
\end{align*}
Note that $\chi(\f{k}{N})\leq \chi(\f{k-\ell}{N/2})+\chi(\f{\ell}{N/2})$. We infer from the first inequality of this lemma that  
\begin{align*}
I_2\leq C|f|_{X^{s_1}}|P_{\geq \f{N}{2}}g|_{X^{r+\sigma}}+C|f|_{X^{r+1-\delta}}|g|_{X^{s+\delta}}.
\end{align*}
Putting $I_1-I_2$ together, we arrive at the second inequality.
\end{proof}

\subsection{Elliptic equation in a strip}

We denote by $(\tri_{\e,D})^{-1}h$ the solution of the following elliptic equation:
\begin{align*}
\left\{
\begin{aligned}
&\tri_\e F=(\pa_y^2+\e^2\pa_x^2)F=h,\quad (x,y)\in \mathcal{S},\\
 & F|_{y=0,1}=0.
\end{aligned}
\right.
\end{align*}
Let us introduce some notations
\begin{align}
K_1(k,y)=\f{e^{\e |k|y}-e^{-\e |k|y}}{e^{\e |k|}-e^{-\e |k|}},\quad K_2(k,y)=e^{-\e|k|y},\label{def:K12}
\end{align}
and 
\begin{align}
\nonumber
G_0(k,y)=&e^{-\e|k|}K_1(k,y)-K_1(k,1-y)-K_2(k,y)\\
=&\f{2e^{-\e|k|(1-y)}-2e^{\e|k|(1-y)}}{e^{\e |k|}-e^{-\e |k|}},\label{def:G0}\\
\nonumber
G_1(k,y)=&K_1(k,y)-e^{-\e|k|}K_1(k,1-y)+K_2(k,1-y)\\
=&\f{2e^{\e|k|y}-2e^{-\e|k|y}}{e^{\e |k|}-e^{-\e |k|}}.\label{def:G1}
\end{align}

\begin{lemma}\label{lem:elliptic}
Let $F=(\tri_{\e,D})^{-1}h$.
It holds that
\begin{align*}
\widehat{F}(k,y)=&\f{e^{-\e|k|(1-y)}}{2|k|\e}\int_0^1K_1(k,y)\widehat{h}(k,y)dy+\f{e^{-\e|k|y}}{2|k|\e}\int_0^1K_1(k,1-y)\widehat{h}(k,y)dy\\
&+\f{1}{2|k|\e}\int_1^yK_2(k,y'-y)\widehat{h}(k,y')dy'-\f{1}{2|k|\e}\int_0^yK_2(k,y-y')\widehat{h}(k,y')dy'.
\end{align*}
In particular, we have
\begin{align*}
\nonumber
\mathcal{F}(\pa_y(\tri_{\e,D})^{-1}h)=&\f{e^{-\e|k|(1-y)}}{2}\int_0^1K_1(k,y)\widehat{h}(k,y)dy-\f{e^{-\e|k|y}}{2}\int_0^1K_1(k,1-y)\widehat{h}(k,y)dy\\
&+\f{1}{2}\int_1^yK_2(k,y'-y)\widehat{h}(k,y')dy'+\f{1}{2}\int_0^yK_2(k,y-y')\widehat{h}(k,y')dy',
\end{align*}
and 
\begin{align*}
&\mathcal{F}(\pa_y(\tri_{\e,D})^{-1}h)|_{y=0}=\f12\int_0^1(G_0\widehat{h})(k,y)dy,\\
& \mathcal{F}(\pa_y(\tri_{\e,D})^{-1}h)|_{y=1}=\f12\int_0^1(G_1\widehat{h})(k,y)dy.
\end{align*}
\end{lemma}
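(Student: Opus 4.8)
The plan is to reduce the boundary value problem defining $(\tri_{\e,D})^{-1}$ to a family of ordinary differential equations via the partial Fourier transform in $x$, solve each ODE explicitly, and then organize the resulting Green's kernel into the stated form. First I would apply $\mathcal{F}$ in $x$: since $x\in\mathbb{T}$, writing $\lambda=\e|k|$, the equation $\tri_\e F=h$ with $F|_{y=0,1}=0$ becomes, for each frequency $k$, the two-point problem
\beno
\pa_y^2\widehat{F}(k,y)-\lambda^2\widehat{F}(k,y)=\widehat{h}(k,y),\qquad \widehat{F}(k,0)=\widehat{F}(k,1)=0.
\eeno
The homogeneous solutions are $e^{\pm\lambda y}$, equivalently $\sinh(\lambda y)$ and $\sinh(\lambda(1-y))$, which vanish at $y=0$ and $y=1$ respectively; in the notation of \eqref{def:K12} these are $\sinh\lambda\cdot K_1(k,y)$ and $\sinh\lambda\cdot K_1(k,1-y)$. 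A standard variation-of-parameters computation then produces a representation of the form $\widehat F(k,y)=\int_0^1 G(y,y')\widehat h(k,y')\,dy'$ with the Dirichlet Green's kernel built from these two solutions.

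Rather than re-derive the kernel from scratch, I would simply verify that the claimed expression for $\widehat F$ solves the ODE and satisfies the two boundary conditions; since the Dirichlet problem for $\pa_y^2-\lambda^2$ on $[0,1]$ has trivial kernel, uniqueness then forces equality. Checking the interior equation amounts to differentiating the stated formula twice in $y$, using $\pa_y^2 e^{\mp\lambda y}=\lambda^2 e^{\mp\lambda y}$ together with $K_2(k,0)=1$. Verifying $\widehat F|_{y=0,1}=0$ is the one genuinely computational point: at $y=0$ the $\int_0^y$ term vanishes and $\int_1^y=-\int_0^1$, reducing the boundary value to $\f{1}{2\lambda}\int_0^1\big[e^{-\lambda}K_1(k,y)+K_1(k,1-y)-K_2(k,y)\big]\widehat h\,dy$, and the bracket vanishes identically thanks to the hyperbolic identity
\beno
e^{-\lambda}\sinh(\lambda y)+\sinh(\lambda(1-y))=e^{-\lambda y}\sinh\lambda,
\eeno
with the symmetric identity handling $y=1$.

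Next I would compute $\pa_y F$ by differentiating under the integral sign. The two terms carrying the $y$-independent integrals $\int_0^1 K_1\widehat h\,dy$ contribute only through their prefactors $e^{-\lambda(1-y)}$ and $e^{-\lambda y}$, which upon differentiation and division by the $2\lambda$ factors yield exactly the first two terms of the claimed formula for $\mathcal{F}(\pa_y(\tri_{\e,D})^{-1}h)$. The delicate step, and the only place where care is needed, is the Leibniz differentiation of the two variable-endpoint integrals $\int_1^y$ and $\int_0^y$: each produces a diagonal boundary contribution $\pm\f{1}{2\lambda}K_2(k,0)\widehat h(k,y)=\pm\f{1}{2\lambda}\widehat h(k,y)$, and these cancel precisely, while $\pa_y K_2(k,y'-y)=\lambda K_2(k,y'-y)$ and $\pa_y K_2(k,y-y')=-\lambda K_2(k,y-y')$ convert the remaining integrals into the stated ones.

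Finally, to obtain the trace formulas I would set $y=0$ and $y=1$ in the expression for $\mathcal{F}(\pa_y(\tri_{\e,D})^{-1}h)$. At $y=0$ the integral $\int_0^y$ vanishes and $\int_1^y$ becomes $-\int_0^1 K_2(k,y)\,\cdot$, so collecting the three surviving terms reproduces exactly the combination $e^{-\e|k|}K_1(k,y)-K_1(k,1-y)-K_2(k,y)=G_0(k,y)$ from \eqref{def:G0}; the case $y=1$ is identical and yields $G_1$ via \eqref{def:G1}. I expect no conceptual obstacle anywhere: the whole lemma is a bookkeeping computation, and the only things to watch are the cancellation of the diagonal Leibniz terms and the hyperbolic-identity verification of the homogeneous boundary conditions (with the degenerate frequency $k=0$, where the $1/\lambda$ factors are read in the limiting sense, treated by continuity or as the elementary problem $\pa_y^2 F=h$).
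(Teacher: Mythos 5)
Your proposal is correct, and it is worth recording how its logical skeleton differs from the paper's. Both arguments start the same way, reducing $\tri_\e F=h$, $F|_{y=0,1}=0$ by the Fourier transform in $x$ to the two-point problem $(\pa_y^2-\e^2|k|^2)\widehat F=\widehat h$, $\widehat F(k,0)=\widehat F(k,1)=0$, and both end the same way (Leibniz differentiation with the cancelling diagonal terms $\pm\f{1}{2|k|\e}K_2(k,0)\widehat h$ to get the $\pa_y$ formula, then evaluation at $y=0,1$ to recover $G_0$ and $G_1$). The difference is in the middle step, where the formula for $\widehat F$ itself is established: the paper \emph{constructs} the solution, writing $\widehat F=C_1(k)e^{\e|k|y}+C_2(k)e^{-\e|k|y}+F_*(k,y)$ with an explicit variation-of-parameters particular solution $F_*$, solving the $2\times 2$ linear system for $C_1,C_2$ imposed by the Dirichlet conditions, and regrouping the exponentials into $K_1,K_2$; you instead \emph{verify} the stated kernel formula, checking the ODE directly and checking the boundary conditions via the hyperbolic identity $e^{-\la}\sinh(\la y)+\sinh(\la(1-y))=e^{-\la y}\sinh\la$ (which is correct, and does make the bracket $e^{-\e|k|}K_1(k,y)+K_1(k,1-y)-K_2(k,y)$ vanish identically), and then invoke uniqueness for the Dirichlet problem of $\pa_y^2-\la^2$ on $[0,1]$, whose homogeneous problem has only the trivial solution since $-\la^2\le 0$. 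Your route buys freedom from solving for $C_1,C_2$ and from the regrouping bookkeeping, at the price of needing the answer in hand before starting and of the identity check; the paper's route derives the kernel without guessing it. Your closing remark on the frequency $k=0$, where the prefactor $\f{1}{2|k|\e}$ is singular and the formulas must be read in the limiting sense (or the problem $\pa_y^2 F=h$ solved directly), addresses a point the paper passes over in silence, and your treatment of it is correct: the limits $K_1(k,y)\to y$, $K_2\to 1$ reproduce the elementary Dirichlet solution.
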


\begin{proof}
{Taking the Fourier transformation in $x$ on $F$, we get}
\begin{align*}
(\pa_y^2-\e^2|k|^2)\widehat{F}=\widehat{h},\quad \widehat{F}|_{y=0,1}=0.
\end{align*}
Then we have
\begin{align*}
\widehat{F}(k,y)=C_1(k) e^{\e |k|y}+C_2(k) e^{-\e |k| y}+F_*(k,y),
\end{align*}
where
\begin{align*}
F_*(k,y)=\f{1}{2|k|\e}\int_1^y e^{-\e |k|(y'-y)} \widehat{h}(k,y')dy'-\f{1}{2|k|\e}\int_0^y e^{-\e |k|(y-y')} \widehat{h}(k,y')dy',
\end{align*}
and
\begin{align*}
&C_1(k)=\f{1}{2|k|\e(e^{-\e |k|}-e^{\e |k|})}\int_0^1(e^{-\e |k|(1+y)}-e^{-\e |k|(1-y)})\widehat{h}(k,y)dy,\\
&C_2(k)=\f{1}{2|k|\e(e^{-\e |k|}-e^{\e |k|})}\int_0^1(e^{-\e |k|(1-y)}-e^{\e |k|(1-y)})\widehat{h}(k,y)dy.
\end{align*}
Recalling the definitions of $K_1$ and $K_2$, we obtain the solution formula  of $\widehat{F}(k,y)$.
The other formulas can be directly obtained from $\widehat{F}(k,y)$.
\end{proof}

We also introduce
\ben\label{def:G23}
G_2(k,y)=\pa_yG_0(k,y),\quad G_3(k,y)=\pa_y G_1(k,y). 
\een
It is easy to find that for any $s\in[1,\infty],$ 
\begin{align}
&\|(K_1,K_2,G_0,G_1)\|_{L^s}\leq  C\min\Big\{1,\f{1}{\e^{\f 1s} (1+|k|)^{\f 1s}}\Big\},\label{eq:K12-est}\\
&\|(\pa_y K_1,\pa_y K_2, G_2,G_3)\|_{L^s}\leq  C\e^{1-\f 1s} (1+|k|)^{1-\f1s}.\label{eq:G23-est}
\end{align}
Here the constant $C$ is independent of $k, \e$.

\section{The vorticity formulation of the error equations}
We denote
\begin{align*}
u^R=u^\e-u^p,\quad v^R=v^\e-v^p,\quad p^R=p^\e-p^p.
\end{align*}
It is easy to find that
\begin{align}\label{eq:error-u}
\left\{
\begin{aligned}
&\pa_t u^R-\Delta_\e u^R+\pa_x p^R+u^\e\pa_x u^R+u^R\pa_x u^p+v^\e\pa_y u^R+v^R\pa_y u^p-\e^2\pa_x^2 u^p=0,\\
&\e^2(\pa_t v^R-\Delta_\e v^R)+\pa_y p^R+\e^2(\pa_t v^p -\e^2 \pa_x^2 v^p-\pa_y^2 v^p+u^\e\pa_x v^\e+v^\e\pa_y v^\e)=0,\\
&\pa_x u^R+\pa_y v^R=0,\\
&(u^R,v^R)|_{y=0}=(u^R,v^R)|_{y=1}=0,\\
&(u^R,v^R)|_{t=0}=0.
\end{aligned}
\right.
\end{align}

We  introduce the vorticity $\om^R=\pa_y u^R-\e^2 \pa_x v^R$, which satisfies
 \begin{align*}
&\pa_t \om^R-\Delta_\e\om^R+f =N(\om^R, \om^R),
 \end{align*}
 where  $f$ and $N(\om^R, \om^R)$ are defined by
 \begin{align}
&f=f_3-\e^2 (f_1+f_2),\label{eq: f}\\
&N(\om^R, \om^R)=-u^R\pa_x \om^R  -v^R\pa_y \om^R,\label{eq:N1}
\end{align}
with
\begin{align}
&f_1=-(u^R\pa_x^2 v^p+v^R\pa_x\pa_y v^p),\label{eq: f_1}\\
&f_2= -( \pa_t \pa_x v^p-\e^2\pa_x^3 v^p-\pa_y^2\pa_x v^p+\pa_x^2\pa_y u^p+u^p\pa_x^2 v^p+v^p\pa_x\pa_y v^p),\label{eq: f_2}\\
&f_3= u^p\pa_x \om^R+u^R\pa_x \om^p+v^p\pa_y \om^R+v^R\pa_y \om^p,\quad \om^p=\pa_y u^p.\label{eq: f_3}
& \end{align}

Next let us derive the boundary condition of the vorticity.  Thanks to $\pa_x u^R+\pa_yv^R=0$ and $v^R|_{y=0,1}=0$, there exists  $\phi$ so that 
\begin{align*}
-\pa_x\phi=v^R,\quad \pa_y\phi=u^R-\f 1 {2\pi}\int_{\cS} u^Rdxdy,
\end{align*}
Since $\int_{\mathbb{T}} v^Rdx=0,$ the function $\phi$ is periodic in $x$. Thanks to $\pa_x\phi|_{y=0,1}=0$ and $\phi(1,x)-\phi(0,x)=0$, we may assume that $\phi|_{y=0,1}=0$. Thus, there holds that 
\begin{align*}
\tri_\e \phi=\om^R\quad \rm{in}\,\,\cS,\quad \phi|_{y={0,1}}=0.
\end{align*}
This shows that 
\begin{align}
u^R=&\pa_y(\tri_{\e,D})^{-1}\om^R+\f 1 {2\pi}\int_{\cS} u^Rdxdy,\label{eq: Biot u^R }\\
v^R=&-\pa_x(\tri_{\e,D})^{-1}\om^R.\label{eq: Biot v^R }
\end{align}

Motivated by \cite{Mae, WWZ}, we have
\begin{lemma}
It holds that 
\begin{align*}
&(\pa_y+\e |D|)\om^R|_{y=0}=\pa_y(\tri_{\e,D})^{-1}(f-N(\om^R, \om^R))|_{y=0}+\f1 {2\pi}\int_{\cS}\pa_tu^Rdxdy,\\
&(\pa_y-\e |D|)\om^R|_{y=1}=\pa_y(\tri_{\e,D})^{-1}(f-N(\om^R, \om^R))|_{y=1}+\f 1 {2\pi}\int_{\cS}\pa_tu^Rdxdy.
\end{align*}
\end{lemma}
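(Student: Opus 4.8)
The plan is to derive the two boundary identities by evaluating the Biot--Savart representation \eqref{eq: Biot u^R } at the boundaries and combining it with the vorticity equation restricted to $y=0,1$. First I would take the evolution equation $\pa_t\om^R-\tri_\e\om^R+f=N(\om^R,\om^R)$ and rewrite it as $\tri_\e\om^R=\pa_t\om^R+f-N(\om^R,\om^R)$, so that $\om^R=(\tri_{\e,D})^{-1}\big(\pa_t\om^R+f-N(\om^R,\om^R)\big)$ plus the contribution of the boundary values of $\om^R$, which are not zero. The point is to separate the part of $\om^R$ governed by the interior source from the harmonic part fixed by the (unknown) boundary data, and then produce a relation that eliminates the interior source in favour of the explicitly computable Neumann trace.

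The key computational step uses Lemma \ref{lem:elliptic}. From \eqref{eq: Biot u^R } we have $\pa_y(\tri_{\e,D})^{-1}\om^R=u^R-\f1{2\pi}\int_{\cS}u^Rdxdy$, and since $u^R|_{y=0,1}=0$ by \eqref{eq:error-u}, evaluating at the boundary gives $\pa_y(\tri_{\e,D})^{-1}\om^R|_{y=0,1}=-\f1{2\pi}\int_{\cS}u^Rdxdy$. On the other hand, applying the Neumann-trace formulas from Lemma \ref{lem:elliptic} directly to $\om^R$ (viewed as the right-hand side of its own elliptic problem $\tri_\e\phi=\om^R$ is not quite it — rather I apply the operator $\pa_y(\tri_{\e,D})^{-1}$ to the equation $\tri_\e\om^R=\pa_t\om^R+f-N(\om^R,\om^R)$) yields the trace of $\pa_y\om^R$ expressed through $G_0,G_1$. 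The crucial algebraic identity is that the kernels satisfy a relation producing exactly the combination $\pa_y\pm\e|D|$: one checks from \eqref{def:G0}--\eqref{def:G1} that the boundary operator $\f12\int_0^1 G_i(k,\cdot)\,\widehat{h}\,dy$ applied to $h=\tri_\e\om^R$, together with the $\e|k|$ factors coming from the exponential kernels $K_2$, reorganizes into $(\pa_y+\e|D|)\om^R|_{y=0}$ and $(\pa_y-\e|D|)\om^R|_{y=1}$. This is precisely why the combinations $\pa_y\pm\e|D|$ appear: they are the Dirichlet-to-Neumann type operators for $\tri_\e$ on the strip, and acting on a harmonic (in the $\tri_\e$ sense) remainder they annihilate it, leaving only the interior source.

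Concretely, I would write $\om^R=\om^R_{\mathrm{int}}+\om^R_{\mathrm{harm}}$ where $\tri_\e\om^R_{\mathrm{int}}=\pa_t\om^R+f-N(\om^R,\om^R)$ with zero Dirichlet data and $\tri_\e\om^R_{\mathrm{harm}}=0$ carrying the boundary values. Applying $(\pa_y\pm\e|D|)$ to $\om^R_{\mathrm{harm}}$ at the appropriate boundary kills it, because the half-space operator $\pa_y+\e|D|$ annihilates $e^{\e|k|y}$ decaying profiles at $y=0$ and symmetrically at $y=1$. The surviving term is $(\pa_y\pm\e|D|)\om^R_{\mathrm{int}}|_{y=0,1}=\pa_y(\tri_{\e,D})^{-1}(\pa_t\om^R+f-N(\om^R,\om^R))|_{y=0,1}$. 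Then I substitute $\pa_y(\tri_{\e,D})^{-1}\pa_t\om^R|_{y=0,1}$, which by interchanging $\pa_t$ with the elliptic solve and using the boundary relation above equals $\pa_t\big(\pa_y(\tri_{\e,D})^{-1}\om^R\big)|_{y=0,1}=\pa_t\big(-\f1{2\pi}\int_{\cS}u^Rdxdy\big)=-\f1{2\pi}\int_{\cS}\pa_tu^Rdxdy$, accounting for the sign and giving the stated $+\f1{2\pi}\int_{\cS}\pa_tu^Rdxdy$ term.

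The main obstacle I anticipate is the careful bookkeeping of the constant mode $k=0$ and the nonlocal average $\f1{2\pi}\int_{\cS}u^R\,dxdy$: the representation \eqref{eq: Biot u^R } is only valid up to this mean, the operator $(\tri_{\e,D})^{-1}$ is defined via division by $|k|$ so the zero frequency must be handled separately, and one must verify that the time derivative genuinely commutes with the elliptic solve on the nonzero frequencies while the $k=0$ part feeds exactly the $\int_{\cS}\pa_tu^R\,dxdy$ term. A secondary technical point is confirming the exact cancellation that turns the Neumann traces from Lemma \ref{lem:elliptic} into the clean operators $\pa_y\pm\e|D|$; this requires using the explicit forms \eqref{def:G0}--\eqref{def:G1} and the identity $\pa_yK_2=-\e|k|K_2$ rather than any soft argument. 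Once those two points are pinned down, the remaining manipulations are direct substitutions.
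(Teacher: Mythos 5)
Your overall strategy coincides with the paper's: differentiate the Biot--Savart relation \eqref{eq: Biot u^R } in time, use $u^R|_{y=0,1}=0$ to produce the term $\f{1}{2\pi}\int_{\cS}\pa_tu^R\,dxdy$, substitute the vorticity equation, and let the elliptic structure generate the boundary operators. However, the pivotal step --- the one that actually produces $\pa_y\pm\e|D|$ --- fails as you state it. You decompose $\om^R=\om^R_{\mathrm{int}}+\om^R_{\mathrm{harm}}$ with $\om^R_{\mathrm{harm}}$ the \emph{strip}-harmonic function carrying both traces $\om^R|_{y=0}$ and $\om^R|_{y=1}$, and claim that $(\pa_y\pm\e|D|)$ annihilates $\om^R_{\mathrm{harm}}$ at the corresponding boundary. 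This is false: by \eqref{def:K12} one has $\widehat{\om^R_{\mathrm{harm}}}(k,y)=K_1(k,y)\widehat{\om^R}(k,1)+K_1(k,1-y)\widehat{\om^R}(k,0)$, a combination of $e^{\e|k|y}$ and $e^{-\e|k|y}$, while $(\pa_y+\e|k|)$ annihilates only the pure decaying profile $e^{-\e|k|y}$. A direct computation gives
\begin{align*}
(\pa_y+\e|D|)\om^R_{\mathrm{harm}}\big|_{y=0}
=\f{\e|D|}{\sinh(\e|D|)}\Big(\om^R|_{y=1}-e^{-\e|D|}\om^R|_{y=0}\Big),
\end{align*}
which at low frequencies is of the size of $\om^R|_{y=1}-\om^R|_{y=0}$, i.e. order one, not zero. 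Put differently, $\pa_y\pm\e|D|$ are Dirichlet-to-Neumann operators for the \emph{half-space}, not for the strip: the strip operator is $\e|D|\coth(\e|D|)$ together with the coupling $\e|D|/\sinh(\e|D|)$ to the opposite boundary, so your decomposition, computed honestly, does not give the stated identity.

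The paper organizes exactly this step differently, and that is where the difference matters: it introduces the \emph{half-space} harmonic extension $\om^R_{h,0}$ of the single trace $\om^R|_{y=0}$, i.e. $\widehat{\om^R_{h,0}}(k,y)=e^{-\e|k|y}\widehat{\om^R}(k,0)$, for which the annihilation identity $\pa_y\om^R_{h,0}|_{y=0}=-\e|D|\om^R|_{y=0}$ is exact, and then writes $\tri_\e\om^R=\tri_\e(\om^R-\om^R_{h,0})$ inside $\pa_y(\tri_{\e,D})^{-1}(\cdot)|_{y=0}$, using $(\om^R-\om^R_{h,0})|_{y=0}=0$. Your instinct that a delicate cancellation needs checking is right, and in fact even the paper's step is not airtight: $\om^R-\om^R_{h,0}$ does not vanish at $y=1$, so $(\tri_{\e,D})^{-1}\tri_\e(\om^R-\om^R_{h,0})$ differs from $\om^R-\om^R_{h,0}$ by a strip-harmonic corrector whose normal derivative at $y=0$ is precisely the cross-boundary term displayed above. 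But the mechanism by which $\e|D|$ appears in the paper (annihilation of a pure half-space profile) is correct, while your version of it (annihilation of the strip-harmonic part) is not; repairing your argument forces you back to the paper's half-space corrector. A minor additional point: your substitution, like the paper's own computation, yields $-\f{1}{2\pi}\int_{\cS}\pa_tu^R\,dxdy$ on the right-hand side rather than the $+$ sign in the statement; this is harmless for the later estimates (the term is only ever controlled through \eqref{eq:ut-est}), but you should not assert that your bookkeeping produces the stated sign without explanation.
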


\begin{proof}
We only prove the first equality and the second one is similar. We introduce $\om_{h,0}^R$ which is the harmonic extension of $\om^R|_{y=0}$, i.e., 
\begin{align*}
\left\{
\begin{aligned}
&\tri_\e \om^R_{h,0} =0, \quad x\in \mathbb{T}, \,\,y\in\mathbb{R}_{+},\\
&\om^R_{h,0}|_{y=0}=\om^R|_{y=0}.
\end{aligned}
\right.
\end{align*}
We know that
\begin{align*}
\pa_y\om^R_{h,0}|_{y=0}=-\e|D|\om^R|_{y=0}.
\end{align*}
Taking $\pa_t$ to \eqref{eq: Biot u^R } and using $u^R|_{y=0,1}=0$ and the equation of $\om^R$, we obtain
\begin{align*}
\nonumber
0=&\pa_tu^R|_{y=0}=\pa_y(\tri_{\e,D})^{-1}\om^R_t|_{y=0}+\f1 {2\pi}\int_{\cS} \pa_tu^Rdy\\
=&\pa_y(\tri_{\e,D})^{-1}\Big(\tri_\e(\om^R-\om^R_{h,0})-f+N(\om^R, \om^R)\Big)\Big|_{y=0}+\f1 {2\pi}\int_{\cS} \pa_tu^Rdy\\
=&\pa_y\om^R|_{y=0}-\pa_y\om^R_{h,0}|_{y=0}-\pa_y(\tri_{\e,D})^{-1} (f-N(\om^R, \om^R))\big|_{y=0}+\f1 {2\pi}\int_{\cS}\pa_tu^Rdy,
\end{align*}
 where we used $(\om^R-\om^R_{h,0})|_{y=0}=0$ and $\Delta_{\e}\om^R_{h,0}=0 $.  This shows the first equality.  
\end{proof}

Based on Lemma \ref{lem:elliptic}, we give more precise formulation of the boundary condition of $\om^R$. Firstly, by the definition of $f_3$  and using the divergence free condition, we obtain
\begin{align*}
f_3=\pa_x(u^p\om^R+\pa_x^{-1}v^R~\pa_y \om^p)+u^R \pa_x\om^p-\pa_x^{-1}v^R~\pa_x\pa_y \om^p+\pa_y(v^p\om^R),
\end{align*}
where $\pa_x^{-1}v^R$ is defined by
\begin{align}
\pa_x^{-1}v^R=
\left\{
\begin{aligned}
&-\int_0^y u^Rdz\quad 0\leq y\leq \f12,\\
&-\int_1^y u^Rdz\quad \f12<y\leq1.
\end{aligned}
\right.
\end{align}
Then by Lemma \ref{lem:elliptic} and integration by parts (note $v^p\om^R|_{y=0,1}=0$), we get 
\begin{align*}
\mathcal{F}(\pa_y(\tri_{\e,D})^{-1}f)|_{y=0}(k)=&\f{ik}{2}\int_0^1 G_0(k,y)\mathcal{F}\Big(u^p\om^R+\pa_x^{-1}v^R~\pa_y \om^p\Big)(k,y)dy\\
&-\f{1}{2}\int_0^1 G_2(k,y)\mathcal{F}\Big(v^p\om^R\Big)(k,y)dy\\
&+\f{1}{2}\int_0^1 G_0(k,y)\mathcal{F}\Big(u^R\pa_x \om^p-\pa_x^{-1}v^R~\pa_x\pa_y \om^p-\e^2(f_1+f_2)\Big)(k,y)dy.
\end{align*}
Thus, we obtain
\begin{align*}
\mathcal{F}\Big((\pa_y+\e|D|) \om^R\Big)|_{y=0}=&ik\mathcal{F}h^0(k)+\mathcal{F}h^0_l(k)-\mathcal{F}\big(\pa_y(\tri_{\e,D})^{-1}(N(\om^R, \om^R))|_{y=0}\big)\\
&+\f1 {2\pi}\int_{\cS}\mathcal{F}(\pa_tu^R)dxdy,
\end{align*}
where
\begin{align}
\mathcal{F}h^0(k)=&\f{1}2\int_0^1 \big(G_0\mathcal{F}(u^p\om^R+\pa_x^{-1}v^R~\pa_y \om^p)\big)(k,y)dy,\label{eq:h0}\\
\nonumber
\mathcal{F}h^0_l(k)=&-\f{1}{2}\int_0^1 \big(G_2\mathcal{F}(v^p\om^R)\big)(k,y)dy\\
&+\f{1}{2}\int_0^1 \big(G_0\mathcal{F}(u^R\pa_x \om^p-\pa_x^{-1}v^R~\pa_x\pa_y \om^p-\e^2(f_1+f_2)\big)(k,y)dy.\label{eq:h2}
\end{align}
Similarly, we have
\begin{align*}
\mathcal{F}\Big((\pa_y -\e|D|)\om^R\Big)|_{y=1}=&ik\mathcal{F}h^1(k)+\mathcal{F}h^1_l(k)-\mathcal{F}\big(\pa_y(\tri_{\e,D})^{-1}(N(\om^R, \om^R))|_{y=1}\big)\\
&+\f1 {2\pi}\int_{\cS}\mathcal{F}(\pa_tu^R)dxdy,
\end{align*}
where
\begin{align}
\mathcal{F}h^1(k)=&\f{1}2 \int_0^1 \big(G_1\mathcal{F}(u^p\om^R+\pa_x^{-1}v^R~\pa_y \om^p)\big)(k,y)dy,\label{eq:h1}\\
\nonumber
\mathcal{F}h^1_l(k)=&-\f{1}{2}\int_0^1 \big(G_3\mathcal{F}(v^p\om^R)\big)(k,y)dy\\
&+\f{1}{2}\int_0^1 \big(G_1\mathcal{F}(u^R\pa_x \om^p-\pa_x^{-1}v^R~\pa_x\pa_y \om^p-\e^2(f_1+f_2)\big)(k,y)dy.\label{eq:h3}
\end{align}

Finally, we conclude that 
 \begin{align}\label{eq:error-om}
\left\{
\begin{aligned}
&\pa_t \om^R-\tri_{\e}\om^R +f=N(\om^R, \om^R),\\
&(\pa_y+\e|D|) \om^R|_{y=0}=\pa_xh^0+h^0_l- \pa_y(\tri_{\e,D})^{-1}(N(\om^R, \om^R))|_{y=0}+{\f1 {2\pi}\int_{\cS}\mathcal{F}(\pa_tu^R)dxdy},\\
&(\pa_y-\e|D|) \om^R|_{y=1}=\pa_xh^1+h^1_l- \pa_y(\tri_{\e,D})^{-1}(N(\om^R, \om^R))|_{y=1}+{\f1 {2\pi}\int_{\cS}\mathcal{F}(\pa_tu^R)dxdy },\\
&\om^R|_{t=0}=0.
\end{aligned}
\right.
\end{align}
Note that $\pa_xh^0$ and $\pa_xh^1$ are the worst terms.\smallskip

Let us compute $\int_{\cS} \pa_tu^R dxdy.$ Using the equation of $u^R$ in \eqref{eq:error-u}, we find that
\begin{align*}
\int_{\cS} \pa_t u^R dxdy=\int_{\cS}\pa_y^2 u^R dxdy=\int_{\cS} \pa_y\om^R dxdy,
\end{align*}
which gives
\begin{align}\label{eq:ut-est}
\Big|\int_{\cS} \pa_tu^R dxdy\Big|\leq& \|\pa_y\om^R\|_{L^1}.
\end{align}

Since $(u^R,v^R)$ satisfies the following elliptic equations
\beno
  \left\{
   \begin{array}{ll}
    \tri_\e u^R=\pa_y\om^R,\\
    u^R|_{y=0,1}=0,
   \end{array}
  \right.
  \qquad
    \left\{
     \begin{array}{ll}
      \tri_\e v^R=-\pa_x \om^R,\\
      v^R|_{y=0,1}=0,
     \end{array}
    \right.
\eeno
we arrive at
\begin{align}\label{eq:uR-elliptic}
\big\|(u^R,\e v^R, \pa_y u^R, \e \pa_x u^R, \e\pa_y v^R,\e^2\pa_x v^R)\big\|_{X^r}\leq C\|\om^R\|_{X^r}.
\end{align}

\section{Boundary layer lift }

To handle the worst terms $\pa_xh^0$ and $\pa_xh^1$, motivated by \cite{GMV}, we introduce a boundary layer lift for the vorticity.
More precisely, we consider the heat equation 
\begin{align}\label{eq:om-b}
\left\{
\begin{aligned}
&(\pa_t-\tri_\e)\om^{b,i}=0,   \\
&\pa_y\om^{b,i}|_{y=i}=\pa_x h^i,\\
&\om^{b,i}|_{t=0}=0,
\end{aligned}
\right.
\end{align}
where $t\in[0,T], ~x\in \mathbb{T}$ and $y>0$ for $i=0$ and $y<1$ for $i=1.$ Here $(h^0,h^1)$ is given by \eqref{eq:h0} and \eqref{eq:h1}. We also introduce the boundary layer velocity $(u^{b,i},v^{b,i})$, which are given by
\begin{align}
&u^{b,0}(x,y)=\int_{+\infty}^y\om^{b,0}(x,z)dz,\quad v^{b,0}=\int_{y}^{+\infty}\pa_x u^{b,0}(x,z)dz \quad \mbox{for}\quad y>0,\label{def:ub-0}\\
&u^{b,1}(x,y)=\int_{-\infty}^y\om^{b,1}(x,z)dz,\quad v^{b,1}=\int_{y}^{-\infty}\pa_x u^{b,1}(x,z)dz \quad \mbox{for}\quad y<1.\label{def:ub-1}
\end{align}

Motivated by Lemma 3.1 in \cite{GMV}, we have the following uniform estimates for $(\om^{b,i}, u^{b,i}, v^{b,i})$ in Gevrey class 
$X^r=X^r_{\sigma, \tau}$. 

\begin{lemma}\label{lem:lift}
Let $T>0$ and $r\in \R$. The boundary layer vorticity $\om^{b,i}$ obeys that
\begin{align*}
&\int_0^t\|\om^{b,i}\|_{ X^r}^2+\|(y-i)\pa_y\om^{b,i}\|_{X^r}^2ds\leq \f{C}{\beta^{\f32}}\int_0^t|h^i|_{X^{r+1-\f{3\sigma}4}}^2ds,\\
&\int_0^t\|(y-i)^\ell\om^{b,i}\|_{ X^r}^2+\|(y-i)^{\ell+1}\pa_y\om^{b,i}\|_{ X^r}^2ds\leq \f{C}{\beta^{\f32+\ell}}\int_0^t|h^i|_{ X^{r+1-\f{3\sigma}4-\f {\ell\sigma}2}}^2ds,\\
&\int_0^t|\om^{b,i}|_{y=1-i}|_{X^r}^2ds+|\pa_y\om^{b,i}|_{y=1-i}|_{ X^r}^2ds\leq \f{C}{\beta^{2M}}\int_0^t|h^i|_{X^{r+1-M\sigma}}^2ds,\\ 
& \int_0^t\|(\pa_y,\e\pa_x)\om^{b,i}\|_{X^r}^2ds\leq \f{C}{\beta^\f12}\int_0^t| h^i|_{X^{r+1-\f{\s}4}}^2ds,\\
&\sup_{s\in [0,t]}\|\om^{b,i}(s)\|_{X^r}^2\leq \f{C}{\beta^\f12}\int_0^t|h^i|_{X^{r+1-\f{\sigma}4}}^2ds,
\end{align*}
and the boundary layer velocity $u^{b,i}$ obeys that
\begin{align*}
&\int_0^t\|u^{b,i}\|_{ X^r}^2ds\leq \f{C}{\beta^{\f52}}\int_0^t|h^i|_{ X^{r+1-\f{5\sigma}4}}^2ds,\\
&\int_0^t\Big|\int_0^\infty u^{b,0}dy\Big|_{ X^r}^2ds+\int_0^t\Big|\int_{-\infty}^1 u^{b,1}\Big|_{ X^r}^2ds \leq \f{C}{\beta^{3}}\int_0^t|h^i|_{ X^{r+1-\f{3\sigma}2}}^2ds,\\
&\int_0^t\|(y-i)u^{b,i}\|_{X^r}^2ds\leq \f{C}{\beta^{\f72}}\int_0^t|h^i|_{ X^{r+1-\f{7\sigma}4}}^2ds,\\
&\int_0^t\|\e|D|u^{b,i}\|_{ X^r}^2ds\leq \f{C}{\beta^{\f32}}\int_0^t|h^i|_{ X^{r+1-\f{3\sigma}4}}^2ds,\\
&\int_0^t|u^{b,i}|_{y=i}|_{ X^r}^2\leq \f{C}{\beta^2}\int_0^t|h^i|_{X^{r+1-\sigma}}^2ds,
\end{align*}
and the boundary layer velocity $v^{b,i}$ obeys that
\begin{align*}
&\int_0^t\|v^{b,i}\|_{X^r}^2ds\leq \f{C}{\beta^{\f72}}\int_0^t|h^i|_{X^{r+2-\f{7\sigma}4}}^2ds,\\
&\int_0^t\|\e|D|v^{b,i}\|_{X^r}^2ds\leq \f{C}{\beta^{\f72}}\int_0^t|h^i|_{X^{r+2-\f{5\sigma}4}}^2ds,\\
&\int_0^t|v^{b,i}|_{y=i}|_{X^r}^2ds\leq \f{C}{\beta^{3}}\int_0^t|h^i|_{ X^{r+2-\f{3\sigma}2}}^2ds,\\
&\int_0^t|v^{b,i}|_{y=1-i}|_{ X^r}^2ds\leq \f{C}{\beta^{2M}}\int_0^t|h^i|_{X^{r+2-M\sigma}}^2ds,
\end{align*}
for  all $t\in[0,T]$, $i=0,1$ and any $M\geq0.$

\end{lemma}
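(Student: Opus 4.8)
The plan is to treat the boundary-layer vorticity $\om^{b,i}$ as the solution of a one-dimensional heat equation (in $y$) with an inhomogeneous Neumann boundary condition, taking the Fourier transform in $x$ so that the operator $\tri_\e = \pa_y^2 + \e^2\pa_x^2$ becomes $\pa_y^2 - \e^2|k|^2$ on the half-line. First I would write down the explicit solution via the Neumann heat kernel on $\{y>0\}$ (resp. $\{y<1\}$) with the extra exponential damping $e^{-\e^2|k|^2 t}$ coming from the $\e^2\pa_x^2$ term. Concretely, for $i=0$ the solution is a Duhamel integral against $\pa_x h^0(k,s)=ik\,\mathcal F h^0(k,s)$, convolved in $y$ with the Neumann heat kernel, and multiplied by $e^{-\e^2|k|^2(t-s)}$. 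The $X^r_{\sigma,\tau}$ norm then becomes, after Plancherel in $x$, a weighted $\ell^2_k$ sum where each mode carries the factor $e^{\tau(t)\langle k\rangle^\sigma}$; the key is that $\tau(t)=\tau_0 e^{-\beta t}$ is decreasing, so $e^{\tau(t)\langle k\rangle^\sigma}\le e^{\tau(s)\langle k\rangle^\sigma}$ for $s\le t$, which lets me pull the Gevrey weight inside the Duhamel integral and reduce everything to estimating $|h^i(s)|$ in a shifted $X$-norm.

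\textbf{Reduction to scalar kernel bounds.}
Once the representation is in hand, each of the listed inequalities reduces to bounding a single scalar convolution kernel in $y$ against the forcing $ik\,\mathcal F h^i$, uniformly in $k$ and $\e$. The mechanism producing the gain in $\beta$ is the standard parabolic smoothing estimate: for the heat semigroup one has
\begin{align*}
\int_0^t \big| e^{-(t-s)(\e^2|k|^2+\zeta^2)}\,|k|\,g(s)\big|\,ds
\end{align*}
controlled, after a time integration and Cauchy--Schwarz, by $\beta^{-a}$ times $\int_0^t|g(s)|^2$ in a norm lowered by the corresponding power of $\langle k\rangle^\sigma$. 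The precise powers appear because each factor of $(t-s)^{-1/2}$ from differentiating the heat kernel, and each spatial weight $(y-i)^\ell$ which under scaling behaves like $(t-s)^{\ell/2}$, trades against the Gevrey weight: the elementary inequality
\begin{align*}
\sup_{k} \langle k\rangle^{\theta\sigma}\!\int_0^t e^{-2(t-s)c\langle k\rangle^{\sigma}}\,ds \le C\beta^{-\theta}
\end{align*}
(valid because $\tau(t)-\tau(s)\sim \beta(t-s)\tau$ near the diagonal, generating the effective dissipation $c\langle k\rangle^\sigma$) is what converts half-derivative gains into factors of $\beta^{-1/2}$. I would verify that the exponents $\tfrac{3\sigma}{4},\tfrac{5\sigma}{4},\dots$ in each line match the number of half-derivative trades incurred by the particular combination of $\pa_y$'s, weights $(y-i)^\ell$, and one $y$- or $x$-integration defining $u^{b,i}$ and $v^{b,i}$.

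\textbf{Velocity estimates and trace terms.}
For $u^{b,i}$ and $v^{b,i}$ I would feed the bounds for $\om^{b,i}$ through the defining formulas \eqref{def:ub-0}--\eqref{def:ub-1}: each $y$-integration $\int_{\pm\infty}^y$ acting on an exponentially-localized heat profile contributes another factor of the boundary-layer thickness $\sim \beta^{-1/2}\langle k\rangle^{-\sigma/2}$, which accounts for the successive increments in the $\beta$-power and the $\sigma$-shift between consecutive lines; the $\e|D|$ estimates gain nothing in $\beta$ but cost one $x$-derivative, hence the shift by $1$ in the regularity index. The trace estimates at the far boundary $y=1-i$ are where the $\beta^{-2M}$ with arbitrary $M$ arises: the heat profile launched at $y=i$ must travel a unit distance to reach $y=1-i$, so its value there is bounded by $e^{-c\langle k\rangle^{\sigma}/\sqrt{\beta}}$ to any polynomial order, giving the arbitrarily large gain. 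The main obstacle I anticipate is organizing the kernel computations so that the $\e^2|k|^2$ damping and the Gevrey weight are handled uniformly in $\e$ and across all frequency regimes—in particular ensuring that the constant $C$ genuinely does not depend on $\e$ when $\e|k|$ is large, which requires the elementary kernel bounds \eqref{eq:K12-est}--\eqref{eq:G23-est} to be applied carefully rather than the bounds degenerating as $\e|k|\to\infty$.
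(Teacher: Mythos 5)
Your outline rests on the correct mechanism and would deliver the lemma, but it takes a genuinely different route from the paper. The paper never writes a physical-time Duhamel representation: it passes to the weighted unknown $\om^{b,0}_\Phi$, whose equation carries the explicit damping term $\tau_0\beta\langle D\rangle^\sigma$, extends the problem to all $t\in\R$ (setting $h^0_\Phi\equiv 0$ outside $[0,T]$, invoking Lemma 3.2 of \cite{GMV} for the consistency of this extension), and then takes the Fourier transform in \emph{both} $t$ and $x$. This produces the exact formula
\[
\mathcal{F}_{t,x}\overline{\om}^{b,0}_\Phi(\zeta,k,y)=\f{-ik\,\mathcal{F}_{t,x}h^{0}_\Phi(\zeta,k)}{\sqrt{i\zeta+\tau_0\beta\k^\sigma+\e^2|k|^2}}\,e^{-y\sqrt{i\zeta+\tau_0\beta\k^\sigma+\e^2|k|^2}},
\]
and every inequality of the lemma follows from the single lower bound $\big|\sqrt{i\zeta+\tau_0\beta\k^\sigma+\e^2|k|^2}\big|\ge\sqrt{\tau_0\beta\k^\sigma+\e^2|k|^2}$ for $\Im\zeta\le 0$, plus Plancherel in $(t,x)$: the weights $(y-i)^\ell$, the $y$-integrations defining $u^{b,i},v^{b,i}$, the traces, and the factor $\e|D|$ are all read off from the explicit profile $e^{-y\sqrt{\cdots}}$. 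What the paper's method buys is that the damping sits as a constant coefficient inside the symbol, so uniformity in $\e$ and in frequency is automatic and no convolution estimates with singular-in-time kernels are needed; what your method buys is that it is more elementary (no time-extension step, no Fourier transform in $t$) and yields the fifth, sup-in-time inequality directly, for which the paper needs a separate argument (Lemma 3.3 of \cite{GMV}).

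Three points in your plan need repair before it is a proof. First, monotonicity of $\tau$ alone (``pull the Gevrey weight inside the Duhamel integral'') produces no $\beta$-gain at all; every power of $\beta$ must come from the quantitative weight ratio $e^{-(\tau(s)-\tau(t))\k^\sigma}$, so this factor must be retained in each term -- you do recognize this later, but your opening reduction discards it. Second, your key scalar inequality is misstated: $\int_0^t e^{-2c\beta(t-s)\k^\sigma}ds\le (2c\beta\k^\sigma)^{-1}$ can only give a full power of $(\beta\k^\sigma)^{-1}$, whereas the lemma requires fractional gains such as $(\beta\k^\sigma)^{-3/4}$ (first line) and $(\beta\k^\sigma)^{-1/4}$ (fourth and fifth lines); these come from Young's inequality in $t$ with the singular kernels $(t-s)^{-a}e^{-c\beta(t-s)\k^\sigma}$, whose $L^1_t$ norms are $\sim(\beta\k^\sigma)^{a-1}$ -- e.g.\ $a=\f14$ for the first line, since $\|e^{-y^2/4u}u^{-1/2}\|_{L^2_y}\sim u^{-1/4}$. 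Third, your far-boundary trace factor is inverted: the profile a unit distance from the forced boundary is controlled by $e^{-c\sqrt{\beta\k^\sigma}}$ (from $e^{-y\sqrt{\la}}$ with $\la\sim\beta\k^\sigma$, or by minimizing $\f{1}{4(t-s)}+c\beta(t-s)\k^\sigma$), and it is this factor that dominates $C_M(\beta\k^\sigma)^{-M}$ for every $M$; the factor $e^{-c\k^\sigma/\sqrt{\beta}}$ you wrote tends to $1$ as $\beta\to\infty$ and would give no $\beta^{-2M}$ gain whatsoever. With these corrections, your scheme recovers all the stated estimates with the same exponents.
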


\begin{proof}
The proof is almost the same as Lemma 3.1 in \cite{GMV}. Here we just show main idea by proving an inequality. 

Thanks to the definition of $\tau(t)$,  we find that $\om^{b,0}_\Phi$ satisfies 
\begin{align}\label{eq:om-b0}
\left\{
\begin{aligned}
&(\pa_t+\tau_0\beta\langle D\rangle^\sigma -\tri_\e) \om^{b,0}_\Phi=0,\quad t\in[0,T], (x,y)\in \mathbb{T}\times [0,\infty), \\
&\pa_y\om^{b,0}_\Phi|_{y=0}=\pa_xh^0_\Phi,\\
&\om^{b,0}_j|_{t=0}=0.
\end{aligned}
\right.
\end{align}
For fixed $x\in\mathbb{T}$, we define that $h^0_\Phi(t,x)=0$ for $t\in \mathbb{R} \backslash [0,T]$, and  then we consider the extended system of \eqref{eq:om-b0}:
\begin{align}\label{eq: extension om^b}
\left\{
\begin{aligned}
&(\pa_t+\tau_0\beta\langle D\rangle^\sigma -\tri_\e) \overline{\om}^{b,0}_\Phi=0,\quad t\in \mathbb{R}, (x,y)\in \mathbb{T}\times(0,\infty),\\
&\pa_y\overline{\om}^{b,0}_\Phi|_{y=0}=\pa_xh^0_\Phi,
\end{aligned}
\right.
\end{align}
which satisfies 
\beno
 \overline{\om}^{b,0}_\Phi= \om^{b,0}_\Phi,\quad t\in[0,T],\quad \textrm{and}\quad  \overline{\om}^{b,0}_\Phi= 0,\quad t<0.
\eeno
which comes from Lemma 3.2 in \cite{GMV}. Taking Fourier transform in $t, x$ to obtain
 \begin{align*}
\left\{
\begin{aligned}
&(i\zeta+\tau_0\beta\langle k\rangle^\sigma +\e^2|k|^2)\mathcal{F}_{t,x}\overline{\om}^{b,0}_\Phi-\pa_y^2\mathcal{F}_{t,x}\overline{\om}^{b,0}_\Phi=0,\\
&\pa_y\mathcal{F}_{t,x}\overline{\om}^{b,0}_\Phi|_{y=0}=ik\mathcal{F}_{t,x}h^{b,0}_\Phi.
\end{aligned}
\right.
\end{align*}
Then the solution is given by
\begin{align}\label{eq: solve om^b,0}
\mathcal{F}_{t,x}\overline{\om}^{b,0}_\Phi(\zeta,k,y)=\f{-ik\mathcal{F}_{t,x}h^{b,0}_\Phi(\zeta, k)}{\sqrt{i\zeta+\tau_0\beta\langle k\rangle^\sigma+\e^2|k|^2}}e^{-y\sqrt{i\zeta+\tau_0\beta\langle k\rangle^\sigma+\e^2|k|^2}}.
\end{align}
For all $\zeta$ with $\Im \zeta\leq0,$ there holds
\begin{align*}
&\Big|\sqrt{\tau_0\beta\langle k\rangle^\sigma+\e^2|k|^2+i\zeta}\Big|\geq \sqrt{\tau_0\beta\k^\sigma+\e^2|k|^2-\Im\zeta}\geq \sqrt{\tau_0\beta\k^\sigma+\e^2|k|^2},
\end{align*}
which along with  \eqref{eq: solve om^b,0} implies 
\beno
\|\mathcal{F}_{t,x}\overline{\om}^{b,0}_\Phi(\zeta,k,y)\|^2_{\ell^2_k(L^2_{y,\zeta})}\leq \big\|\f{C_1|k|}{(\beta\k^\sigma)^{3/4}}\mathcal{F}_{t,x}h^0_{\Phi}\big\|^2_{L^2_\zeta(\ell^2_k)}.
\eeno
This shows by Plancherel's formula that
\beno
\int_0^t\|\om^{b,0}\|_{ X^r}^2ds\leq \f{C_1}{\beta^{\f32}}\int_0^t|h^0|_{X^{r+1-\f{3\sigma}4}}^2ds.
\eeno

The proof of the other inequalities is similar. But the proof of the fifth inequality is similar to Lemma 3.3 in \cite{GMV}.
\end{proof}

\section{Control the boundary layer lift via the interior vorticity}
We introduce the boundary layer profiles
\begin{align}
\om^{bl}(t,x,y)=&\om^{b,0}(t,x,y)+\om^{b,1}(t,x,y),\label{def:om-bl}\\
u^{bl}(t,x,y)=&u^{b,0}(t,x,y)+u^{b,1}(t,x,y),\label{def:u-bl}\\
v^{bl}(t,x,y)=&v^{b,0}(t,x,y)+v^{b,1}(t,x,y)\label{def:v-bl}\\
=&\pa_x\Big(\int_y^{+\infty} u^{b,0}(t,x,z)dz+\int_{y}^{-\infty} u^{b,1}(x,z)dz\Big),\nonumber
\end{align}
and the interior vorticity and velocity as follows
\begin{align}
&\om^{in}(t,x,y)=\om^R(t,x,y)-\om^{bl}(t,x,y),\label{def:om-in}\\
&u^{in}(t,x,y)=u^{R}(t,x,y)-u^{bl}(t,x,y),\label{def:u-in}\\
&v^{in}(t,x,y)=v^R(t,x,y)-v^{bl}(t,x,y).\label{def:v-in}
\end{align}

The following lemma gives the relation between $(u^{in}, v^{in})$ and $\om^{in}$.

\begin{lemma}\label{lem:win-uin}
Let $\Psi$ solve the elliptic equation
\begin{align}\label{eq:Psi}
\left\{
\begin{aligned}
&\tri_\e\Psi=0,\\
& \Psi|_{y=0}=-\Big(\int_0^{+\infty} u^{b,0}(t,x,z)dz+\int_{0}^{-\infty} u^{b,1}(t,x,z)dz\Big),\\
&\Psi_{y=1}=-\Big(\int_1^{+\infty} u^{b,0}(t,x,z)dz+\int_{1}^{-\infty} u^{b,1}(t,x,z)dz\Big).
\end{aligned}
\right.
\end{align}
Then it holds that
\begin{align*}
&u^{in}-\f1 {2\pi}\int_{\cS} u^R dxdy+\pa_y \Psi=\pa_y (\tri_{\e,D})^{-1}(\om^{in}+\e^2\pa_x v^{bl}),\\
& v^{in}-\pa_x \Psi=-\pa_x (\tri_{\e,D})^{-1}(\om^{in}+\e^2\pa_x v^{bl}).
\end{align*}
\end{lemma}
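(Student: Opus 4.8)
The plan is to reduce both claimed identities to the uniqueness of the Dirichlet problem for $\tri_\e$, by exhibiting a single stream function for the interior velocity whose $\e$-Laplacian is $\om^{in}+\e^2\pa_x v^{bl}$ and whose boundary trace is governed by $\Psi$.

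First I would introduce a stream function for the boundary layer velocity. Since $\pa_y u^{b,i}=\om^{b,i}$ and $(u^{b,i},v^{b,i})$ is divergence free by \eqref{def:ub-0}--\eqref{def:ub-1}, setting
\[
\psi^{bl}(t,x,y)=\int_{+\infty}^y u^{b,0}(t,x,z)\,dz+\int_{-\infty}^y u^{b,1}(t,x,z)\,dz
\]
gives $u^{bl}=\pa_y\psi^{bl}$ and $v^{bl}=-\pa_x\psi^{bl}$, the latter matching the definition \eqref{def:v-bl} of $v^{bl}$. Writing $\Theta=(\tri_{\e,D})^{-1}\om^R$, the Biot--Savart formulas \eqref{eq: Biot u^R } and \eqref{eq: Biot v^R } give $u^R=\pa_y\Theta+\f1{2\pi}\int_{\cS}u^Rdxdy$ and $v^R=-\pa_x\Theta$, so with $\chi:=\Theta-\psi^{bl}$ the interior velocity acquires the single-stream-function form $u^{in}=\pa_y\chi+\f1{2\pi}\int_{\cS}u^Rdxdy$ and $v^{in}=-\pa_x\chi$.

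Next I would compute $\tri_\e\chi$. From $\pa_y^2\psi^{bl}=\pa_y u^{bl}=\om^{bl}$ and $\pa_x^2\psi^{bl}=-\pa_x v^{bl}$ one obtains $\tri_\e\psi^{bl}=\om^{bl}-\e^2\pa_x v^{bl}$, hence
\[
\tri_\e\chi=\om^R-\big(\om^{bl}-\e^2\pa_x v^{bl}\big)=\om^{in}+\e^2\pa_x v^{bl}.
\]
For the boundary values, since $\Theta|_{y=0,1}=0$ I only need the traces of $\psi^{bl}$; evaluating the defining integrals at $y=0$ and $y=1$ and comparing against the Dirichlet data prescribed for $\Psi$ in \eqref{eq:Psi} shows $\psi^{bl}|_{y=0,1}=\Psi|_{y=0,1}$, so $(\chi+\Psi)|_{y=0,1}=0$. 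Because $\tri_\e\Psi=0$, the function $\chi+\Psi$ solves $\tri_\e(\chi+\Psi)=\om^{in}+\e^2\pa_x v^{bl}$ with vanishing Dirichlet data, whence uniqueness forces $\chi+\Psi=(\tri_{\e,D})^{-1}(\om^{in}+\e^2\pa_x v^{bl})$. Substituting $\chi=(\tri_{\e,D})^{-1}(\om^{in}+\e^2\pa_x v^{bl})-\Psi$ into the expressions for $u^{in}$ and $v^{in}$ and applying $\pa_y$ and $-\pa_x$ respectively produces the two asserted identities after rearrangement.

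The routine points --- convergence of the vertical integrals defining $\psi^{bl}$ and its periodicity in $x$ --- follow from the decay and periodicity of $u^{b,i}$ recorded in Lemma \ref{lem:lift}. The one step demanding genuine care is the $\e$-weighted bookkeeping: the correct notion of vorticity here is $\pa_y u-\e^2\pa_x v$ rather than $\pa_y u-\pa_x v$, which is precisely why the elliptic source for $\chi$ carries the extra term $\e^2\pa_x v^{bl}$ and why it cannot be dropped; matching the orientations of the improper integrals in $\psi^{bl}|_{y=0,1}$ against the signs in \eqref{eq:Psi} is the other place where an error would most easily creep in.
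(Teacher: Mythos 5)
Your proof is correct, and it shares the paper's master plan --- exhibit a stream function for $(u^{in},v^{in})$ whose $\tri_\e$-image is $\om^{in}+\e^2\pa_x v^{bl}$ and conclude by uniqueness of the Dirichlet problem --- but the way you build that stream function is genuinely different, and in a useful way. The paper invokes the divergence-free condition $\pa_x u^{in}+\pa_y v^{in}=0$ to produce a stream function $\phi$ abstractly (periodic because $\int_{\mathbb{T}}v^{in}dx=0$), and then must fix additive normalizations so that the traces of $\phi+\Psi$ vanish at $y=0,1$; this boundary-matching step is asserted rather than argued, and it is the only delicate point of the paper's proof. You instead set $\chi=(\tri_{\e,D})^{-1}\om^R-\psi^{bl}$, reusing the Biot--Savart identities \eqref{eq: Biot u^R }--\eqref{eq: Biot v^R } already established in Section 3 together with an explicit boundary-layer stream function $\psi^{bl}$. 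This buys two things: the trace $(\tri_{\e,D})^{-1}\om^R|_{y=0,1}=0$ holds by definition of the Dirichlet inverse, and $\psi^{bl}|_{y=0,1}$ is computed directly from the defining integrals, so the identity $(\chi+\Psi)|_{y=0,1}=0$ becomes an explicit verification with no constants left to choose --- i.e., your route makes rigorous exactly the step the paper compresses. Your sign bookkeeping ($\int_{-\infty}^{0}u^{b,1}dz=-\int_{0}^{-\infty}u^{b,1}dz$, and likewise at $y=1$) does check out against \eqref{eq:Psi}, and your relation $\tri_\e\psi^{bl}=\om^{bl}-\e^2\pa_x v^{bl}$ is precisely the content of the paper's formula $\om^{in}=\pa_y u^{in}-\e^2\pa_x v^{in}-\e^2\pa_x v^{bl}$, so the two computations of the elliptic source agree.
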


\begin{proof}
By the construction, we have 
\begin{align*}
 \pa_x u^{in}+\pa_y v^{in}=0,\quad  \om^{in}=\pa_y u^{in}-\e^2\pa_x v^{in}-\e^2\pa_x v^{bl}.
  \end{align*}
Thanks to $\pa_x u^{in}+\pa_y v^{in}=0$, there exists a stream function $\phi$ so that 
\begin{align*}
-\pa_x\phi=v^{in},\quad \pa_y \phi=u^{in}-\f1 {2\pi}\int_{\cS} u^R dxdy.
\end{align*}
Here $\phi$ is a periodic function in $x$ due to $\int_{\mathbb{T}}v^{in}dx=0$.
Thanks to $\int_{\T}\phi(x,1) dx=\int_{\T}\phi(x,0)dx$, we may assume that 
$\int_{\T}\phi(x,1) dx=\int_{\T}\phi(x,0)dx=0$.Thus, we find that 
\begin{align*}
\tri_\e (\phi+\Psi)=\om^{in}+\e^2\pa_x v^{bl},\quad (\phi+\Psi)|_{y=0,1}=0.
\end{align*}
 This implies our result.
\end{proof}

\begin{lemma}\label{lem:uin-win}
Let $0<\al_1<\al_2$ and $r\ge 0$. Then it holds that
\begin{align*}
&\int_0^t\|(u^{in},\e v^{in})\|_{X^{r+\f{\sigma}2}}^2ds\leq C\int_0^t\|\om^{in}\|_{X^{r+\f{\sigma}2}}^2ds+\f{C}{\beta^{\f32}}\int_0^t|(h^0,h^1)|_{X^{r+1-\f{3\sigma}4}}^2ds,\\
&\int_0^t\|(P_{\geq \al_2 N(\e)}-P_{\geq \al_1 N(\e)})( u^{R},\e v^{R})\|_{X^r}^2ds\leq C\int_0^t\|(P_{\geq \al_2 N(\e)}-P_{\geq \al_1 N(\e)})\om^{R}\|_{X^{r-\f{\sigma}{2}}}^2ds,
\end{align*}
and  the weighted estimate
\begin{align*}
\int_0^t\|(P_{\geq \al_2 N(\e)}-P_{\geq \al_1 N(\e)})(\varphi u^{R},\e\varphi v^{R})\|_{X^{r}}^2ds&\leq C\int_0^t\Big(\|(P_{\geq \al_2 N(\e)}-P_{\geq \al_1 N(\e)})\om^{R}\|_{X^{r-\sigma}}^2\\
&+\|(P_{\geq \al_2 N(\e)}-P_{\geq \al_1N(\e)})(\varphi\om^{R})\|_{X^{r-\f{\sigma}{2}}}^2\Big)ds,
\end{align*}
where the weight function $\varphi$ is defined by $\varphi(y)=y(1-y)$.
\end{lemma}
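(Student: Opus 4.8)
The plan is to reduce all three estimates to the Biot--Savart representations \eqref{eq: Biot u^R }--\eqref{eq: Biot v^R } and to the analogous formulas of Lemma \ref{lem:win-uin}, and then to extract sharp per-frequency bounds for the operators $\pa_y(\tri_{\e,D})^{-1}$, $\e\pa_x(\tri_{\e,D})^{-1}$ and $(\tri_{\e,D})^{-1}$ directly from the explicit kernels of Lemma \ref{lem:elliptic}. First I would record, using \eqref{eq:K12-est}--\eqref{eq:G23-est} together with Young's and Minkowski's inequalities in $y$, that for every $k$
\beno
\big\|\mathcal F\big(\pa_y(\tri_{\e,D})^{-1}h\big)(k,\cdot)\big\|_{L^2_y}+\big\|\mathcal F\big(\e\pa_x(\tri_{\e,D})^{-1}h\big)(k,\cdot)\big\|_{L^2_y}\le C\min\Big\{1,\f1{\e|k|}\Big\}\|\widehat h(k,\cdot)\|_{L^2_y},
\eeno
with an extra factor $\f1{\e|k|}$ for $(\tri_{\e,D})^{-1}$ itself. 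The arithmetic that drives everything, and the reason for the choice $N(\e)=[\e^{-\f2{2-\sigma}}]$, is the pair of elementary inequalities
\beno
\min\Big\{1,\f1{\e|k|}\Big\}\le \f{C}{\k^{\f\sigma2}}\ \ (|k|\gtrsim N(\e)),\qquad \e^2\k^{2-\sigma}\le C\ \ (|k|\le N(\e)),\\
\e\le \k^{-(1-\f\sigma2)}\ \ (|k|\ge N(\e)).
\eeno

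For the second inequality I would note that $P_{\ge\al_2N(\e)}-P_{\ge\al_1N(\e)}$ is an $x$-Fourier multiplier supported in $|k|\ge\f{\al_1}2N(\e)$, which annihilates the constant in \eqref{eq: Biot u^R }, so it commutes through the Biot--Savart kernels. On its support the factor $\min\{1,(\e|k|)^{-1}\}$ is bounded by $C(\al_1)\k^{-\f\sigma2}$; inserting this into the displayed $L^2_y$ bound and summing $\k^{2r}e^{2\Phi}$ produces exactly $\|(P_{\ge\al_2N}-P_{\ge\al_1N})(u^R,\e v^R)\|_{X^r}\le C\|(P_{\ge\al_2N}-P_{\ge\al_1N})\om^R\|_{X^{r-\f\sigma2}}$, i.e. the advertised gain of $\f\sigma2$ derivatives. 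This is the quantitative form of the heuristic $\|P_{\ge N(\e)}f\|_{X^r}\le C\|P_{\ge N(\e)}\e f\|_{X^{r+1-\f\sigma2}}$ announced in the introduction.

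For the weighted third inequality I would exploit that $\varphi=y(1-y)$ is $x$-independent and vanishes on $\{y=0,1\}$. With $\phi=(\tri_{\e,D})^{-1}\om^R$, so that $u^R=\pa_y\phi+c$ and $v^R=-\pa_x\phi$ where $c=\f1{2\pi}\int_{\cS}u^R\,dxdy$, the function $\varphi\phi$ again vanishes on the boundary and $\tri_\e(\varphi\phi)=\varphi\om^R+[\tri_\e,\varphi]\phi=\varphi\om^R-2\phi+2(1-2y)(u^R-c)$. Hence
\beno
\varphi(u^R-c)=\pa_y(\tri_{\e,D})^{-1}\big(\varphi\om^R-2\phi+2(1-2y)(u^R-c)\big)-(1-2y)\phi,
\eeno
and similarly $\e\varphi v^R=-\e\pa_x(\tri_{\e,D})^{-1}\big(\varphi\om^R-2\phi+2(1-2y)(u^R-c)\big)$. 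Applying the cut-off (which kills $c$), the leading term $\pa_y(\tri_{\e,D})^{-1}(\varphi\om^R)$ and its $\e\pa_x$ analogue give $\|(P_{\ge\al_2N}-P_{\ge\al_1N})(\varphi\om^R)\|_{X^{r-\f\sigma2}}$ by the $\f\sigma2$-gain. The remaining terms are lower order: on the support $\phi=(\tri_{\e,D})^{-1}\om^R$ is controlled by $\om^R$ with a full $\sigma$-gain, since the kernel of $(\tri_{\e,D})^{-1}$ carries $\f1{\e|k|}\min\{1,\f1{\e|k|}\}\le C\k^{-\sigma}$ there, while $u^R$ is controlled by the second inequality with a $\f\sigma2$-gain; both therefore contribute only $\|(P_{\ge\al_2N}-P_{\ge\al_1N})\om^R\|_{X^{r-\sigma}}$.

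Finally, for the first inequality I would start from Lemma \ref{lem:win-uin}, which represents $u^{in}$ and $v^{in}$ through $\pa_y(\tri_{\e,D})^{-1}(\om^{in}+\e^2\pa_xv^{bl})$ (resp. its $-\pa_x$ analogue) plus the harmonic corrector $\pa_{x,y}\Psi$ and a constant. The two $\om^{in}$ contributions are handled by the unit bound $\min\{1,(\e|k|)^{-1}\}\le C$, giving $C\|\om^{in}\|_{X^{r+\f\sigma2}}$ directly, and the constant only affects $k=0$. For the corrector pieces I would use that the elliptic operator contributes one full power of $\e$ to $\pa_y(\tri_{\e,D})^{-1}(\e^2\pa_xv^{bl})$ and that harmonicity contributes $\sqrt{\e|k|}$ to $\na\Psi$; combining these $\e$-gains with the Lemma \ref{lem:lift} bounds for $\|v^{bl}\|_{X^s}$ and for $|\int u^{b,i}\,dy|_{X^s}$ (which lose derivatives but gain powers of $\beta$), and then with the two-sided frequency inequalities above, makes these terms collapse onto $\f{C}{\beta^{3/2}}|(h^0,h^1)|_{X^{r+1-\f{3\sigma}4}}^2$. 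I expect this last balancing to be the main obstacle: one must check that the $\e$-weighted, derivative-lossy corrector estimates, after passing through $(\tri_{\e,D})^{-1}$ and the harmonic extension, land at precisely the regularity $X^{r+1-\f{3\sigma}4}$ with no residual power of $\e$. This succeeds only because the loss of $1-\f\sigma2$ derivatives is compensated by exactly one power of $\e$ at the scale $|k|\sim N(\e)$, which is the whole point of the exponent in $N(\e)=[\e^{-\f2{2-\sigma}}]$.
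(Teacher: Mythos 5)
Your second inequality is proved exactly as in the paper, and your third (weighted) inequality takes a genuinely different route that is nevertheless correct. The paper multiplies the explicit solution formula of Lemma \ref{lem:elliptic} by $\varphi$ and estimates the four resulting kernel terms directly, using the identity $\varphi(y)=\varphi(y')+(y-y')(1-y-y')$ inside the convolution kernels together with the decay $\|\varphi e^{-\e|k|y}\|_{L^2_y}\le C\e^{-3/2}(1+|k|)^{-3/2}$, arriving at the per-frequency bound $\|\varphi\widehat{u^R_\Phi}\|_{L^2_y}\le \f{C}{\e^2(1+|k|)^2}\|\widehat{\om^R_\Phi}\|_{L^2_y}+\f{C}{\e(1+|k|)}\|\varphi\widehat{\om^R_\Phi}\|_{L^2_y}$. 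You instead commute the weight through the operator: $\tri_\e(\varphi\phi)=\varphi\om^R-2\phi+2(1-2y)(u^R-c)$ with $\varphi\phi|_{y=0,1}=0$, then invert and reuse the band gain $\min\{1,(\e|k|)^{-1}\}\le C\k^{-\s/2}$ plus the extra factor $(\e|k|)^{-1}$ carried by $(\tri_{\e,D})^{-1}$ itself. I checked the identity and the bookkeeping: the band multiplier kills $c$ and commutes with multiplication in $y$, the terms $\phi$, $(1-2y)\phi$ and $(1-2y)u^R$ all land in $X^{r-\s}$, and the leading term gives the $X^{r-\s/2}$ norm of $\varphi\om^R$, so your version delivers exactly the stated estimate. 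It is more structural than the paper's kernel computation, at the cost of needing the $L^2_y$ bound for $(\tri_{\e,D})^{-1}$ (not only $\pa_y(\tri_{\e,D})^{-1}$), which does follow from Lemma \ref{lem:elliptic} as you claim.

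Two issues in your first inequality. The genuine gap: the constant $c=\f1{2\pi}\int_{\cS}u^R\,dxdy$ from Lemma \ref{lem:win-uin} cannot be disposed of by saying it ``only affects $k=0$''; it still contributes $|c|\le C\|u^R\|_{L^2}$ to the left-hand side, and this is not controlled by $\om^{in}$ alone. The paper bounds it by $\|u^R\|_{L^2}\le C\|\om^R\|_{L^2}$ and then splits $\om^R=\om^{in}+\om^{bl}$, estimating the $\om^{bl}$ part through Lemma \ref{lem:lift}; this step is precisely why the statement carries the factor $\beta^{-3/2}$ rather than the $\beta^{-5/2}$ produced by the corrector terms alone, so your argument as written does not reach the stated bound without adding it. Separately, the third of your ``elementary inequalities'' is reversed: for $|k|\ge N(\e)$ one has $\e\ge \k^{-(1-\f{\s}2)}$, not $\e\le\k^{-(1-\f{\s}2)}$ (equality holds only at $|k|\sim N(\e)$); indeed your own first inequality is equivalent to the correct direction, so the two are inconsistent as listed. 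Fortunately this is not load-bearing: the band estimates use only the first (correct) inequality, and for the corrector pieces no frequency splitting is needed at all --- the paper absorbs the harmonic-extension factor via the pointwise bound $(\e|k|)^{1/2}\le(\beta\k^{\s}+\e^2|k|^2)^{1/4}$, valid for all $k$, after re-running the Fourier-in-$(t,x)$ argument of Lemma \ref{lem:lift} rather than invoking its statement as a black box (your sketch should make this explicit, since the $(\e|k|)^{1/2}$ weight must act per frequency before the norms are taken).
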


\begin{proof}
Using the fact that 
\begin{align*}
\big\|\mathcal{F}\big((\pa_y,\e\pa_x)(\tri_{\e,D})^{-1}f\big)\big\|_{L^2_y}\leq \f{C}{(1+\e|k|)}\|\widehat{f}\|_{L^2_y}
\end{align*}
and Lemma \ref{lem:win-uin}, we infer that
\begin{align*}
\|(\widehat{u^{in}_\Phi},\e\widehat{v^{in}_\Phi})\|_{L^2_y}\leq& \|(\widehat{\pa_y\Psi_\Phi},\e\widehat{\pa_x\Psi_\Phi})\|_{L^2_y}+\f{C}{(1+\e|k|)}\|\widehat{\om^{in}_\Phi}+\e^2\widehat{\pa_x v^{bl}_\Phi}\|_{L^2_y}\\
&+\|u^R\|_{L^2}.
\end{align*}
Thanks to the definition of $\Psi$, we have
 \begin{align}\label{eq: Phi 1}
 \widehat{\Psi}(t,k,y)=\widehat{\Psi}(t,k,y)=K_1(k,y)\widehat{\Psi}|_{y=1}+K_1(k,1-y)\widehat{\Psi}|_{y=0}.
 \end{align}
 By \eqref{eq:K12-est},  \eqref{eq:G23-est} and following the proof of Lemma \ref{lem:lift}, we can deduce that
 \begin{align*}
\|\mathcal{F}_{t,x}(\pa_y\Psi_\Phi,\e\pa_x\Psi_\Phi)\|_{L^2_y}\leq& \|(\pa_y,\e|k|)K_1\|_{L^2_y}|\mathcal{F}_{t,x}\Psi_\Phi|_{y=0,1}|\\
\leq& C(\e|k|)^\f12\Big(\Big|\int_0^{+\infty} \mathcal{F}_{t,x}u^{b,0}_\Phi(\zeta,k,y)dy\Big|+\Big|\int_1^{-\infty} \mathcal{F}_{t,x}u^{b,1}_\Phi(\zeta,k,y)dy\Big|\Big)\\
\leq& C\f{(\e|k|)^\f12|k|( |\mathcal{F}_{t,x}h^0_\Phi|+|\mathcal{F}_{t,x}h^1_\Phi|)}{(\beta\k^\sigma+\e^2|k|^2)^{\f32}}\leq C\f{|k|( |\mathcal{F}_{t,x}h^0_\Phi|+|\mathcal{F}_{t,x}h^1_\Phi|)}{(\beta\k^\sigma+\e^2|k|^2)^{\f54}},
\end{align*}
and 
\begin{align*}
\f{C}{(1+\e|k|)}\|\e^2\mathcal{F}_{t,x}(\pa_x v^{bl}_\Phi)\|_{L^2_y}\leq \f{\e^2 |k|^3( |\mathcal{F}_{t,x}h^0_\Phi|+|\mathcal{F}_{t,x}h^1_\Phi|)}{(1+\e|k|)(\beta\k^\sigma+\e^2|k|^2)^{\f74}}\leq C\f{|k|( |\mathcal{F}_{t,x}h^0_\Phi|+|\mathcal{F}_{t,x}h^1_\Phi|)}{(\beta\k^\sigma+\e^2|k|^2)^{\f54}}.
\end{align*}
This implies that 
\begin{align*}
\int_0^t\|(u^{in},\e v^{in})\|_{X^{r+\f{\sigma}2}}^2ds\leq &C\int_0^t\|\om^{in}\|_{X^{r+\f{\sigma}2}}^2ds+\f{C}{\beta^{\f52}}\int_0^t|(h^0,h^1)|_{X^{r+1-\f{3\sigma}4}}^2ds+\int_0^t\|u^R\|_{L^2}^2ds\\
\leq &C\int_0^t\|\om^{in}\|_{X^{r+\f{\sigma}2}}^2ds+\f{C}{\beta^{\f32}}\int_0^t|(h^0,h^1)|_{X^{r+1-\f{3\sigma}4}}^2ds.
\end{align*}
Here we used the fact that 
\beno
\int_0^t\|u^R\|_{L^2}^2ds\le C\int_0^t\|\om^R\|_{L^2}^2ds\le  \int_0^t\|\om^{in}\|_{X^{r+\f{\sigma}2}}^2ds+\f{C}{\beta^{\f32}}\int_0^t|(h^0,h^1)|_{X^{r+1-\f{3\sigma}4}}^2ds.
\eeno

By \eqref{eq: Biot u^R } and  \eqref{eq: Biot v^R }, we infer that for  $\al_1 N(\e)\leq |k|\leq \al_2 N(\e)$,
\begin{align*}
\|(\widehat{u^{R}_\Phi},\e\widehat{v^{R}_\Phi})(t,k,\cdot)\|_{L^2_y}\leq \f{C}{(1+\e|k|)}\|\widehat{\om^{R}_\Phi}(t,k,\cdot)\|_{L^2_y}
\leq \f{C}{\k^{\f{\s}2}}\|\widehat{\om^{R}_\Phi}(t,k,\cdot)\|_{L^2_y},
\end{align*}
which implies the second inequality.

By \eqref{eq: Biot u^R } and Lemma \ref{lem:elliptic}, we find that for $\al_1 N(\e)\leq |k|\leq \al_2 N(\e)$, 
\begin{align*}
\nonumber
\varphi(y)\widehat{u^R_\Phi}=&\f{\varphi(y)e^{-\e|k|(1-y)}}{2}\int_0^1K_1(k,y)\widehat{\om^R_\Phi}(t,k,y)dy-\f{\varphi(y)e^{-\e|k|y}}{2}\int_0^1K_1(k,1-y)\widehat{\om^R_\Phi}(t,k,y)dy\\
&+\f{\varphi(y)}{2}\int_1^yK_2(k,y'-y)\widehat{\om^R_\Phi}(t,k,y')dy'+\f{\varphi(y)}{2}\int_0^yK_2(k,y-y')\widehat{\om^R_\Phi}(t,k,y')dy'\\
\nonumber
=& B^1+B^2+B^3+B^4.
\end{align*}

By \eqref{eq:K12-est} and $\|\varphi(y)e^{-\e|k|y}\|_{L^2_y}\leq \f{C}{\e^{\f32}(1+|k|)^{\f32}}$, we get
\begin{align*}
\|B^1\|_{L^2_y}+\|B^2\|_{L^2_y}
\leq&  C\|{\varphi(y)e^{-\e|k|y}}\|_{L^2_y}\|K_1\|_{L^2_y}\|\widehat{\om^R_\Phi}\|_{L^2_y}
\leq \f{C}{\e^2(1+|k|)^2}\|\widehat{\om^R_\Phi}\|_{L^2_y}.
\end{align*}
Thanks to $\varphi(y)=\varphi(y')+(y-y')(1-y-y')$,  we get by Young's inequality  that
\begin{align*}
\|B^3\|_{L^2_y}\leq& \Big\|\int_0^y(y-y')K_2(k,y-y')\widehat{\om^{R}_\Phi}(k,y')dy'\Big\|_{L^2_y}+\Big\|\int_0^yK_2(k,y-y')(\varphi\widehat{\om^{R}_\Phi})(k,y')dy'\Big\|_{L^2_y}\\
\leq& \|yK_2(k,y)\|_{L^1_y}\|\widehat{\om^{R}_\Phi}\|_{L^2_y}+\|K_2(k,y)\|_{L^1_y}\|(\varphi\widehat{\om^{R}_\Phi})\|_{L^2_y}\\
\leq &\f{C}{\e^2(1+|k|)^2}\|\widehat{\om^{R}_\Phi}\|_{L^2_y}+\f{C}{\e(1+|k|)}\|(\varphi\widehat{\om^{R}_\Phi})\|_{L^2_y}.
\end{align*}
Similarly, we have
\begin{align*}
\|B^4\|_{L^2_y}
\leq &\f{C}{\e^2(1+|k|)^2}\|\widehat{\om^{R}_\Phi}\|_{L^2_y}+\f{C}{\e(1+|k|)}\|(\varphi\widehat{\om^{R}_\Phi})\|_{L^2_y}.
\end{align*}
Summing up, we infer that for $\al_1 N(\e)\leq |k|\leq \al_2 N(\e)$,
\begin{align*}
\|\varphi \widehat{u^R_\Phi}\|_{L^2_y}\leq \f{C}{\e^2(1+|k|)^2}\|\widehat{\om^{R}_\Phi}\|_{L^2_y}+\f{C}{\e(1+|k|)}\|\varphi\widehat{\om^{R}_\Phi}\|_{L^2_y}.
\end{align*}
Thus, we obtain
\begin{align*}
\|(P_{\geq \al_2 N(\e)}-P_{\geq \al_1 N(\e)})(\varphi u^R)\|_{X^{r}}
\leq C\Big(&\|(P_{\geq \al_2 N(\e)}-P_{\geq \al_1 N(\e)})\varphi \om^R\|_{X^{r-\f{\sigma}{2}}}\\
&+\|(P_{\geq \al_2 N(\e)}-P_{\geq \al_1 N(\e)})\om^R\|_{X^{r-\sigma}}\Big).
\end{align*}
The estimate of $\e\varphi v^R$ is similar.
\end{proof}

\begin{lemma}\label{lem:h01-est}
Let  $u^p$ be given in Theorem \ref{thm:HyNS}  and $r\in [1,N_0-5]$. Then there exists $\beta_*>1$ such that for $\beta\geq \beta_*$ and  $\sigma\in[\f45,1]$, there holds that 
\begin{align}
&\int_0^t |(h^0,h^1)|^2_{X^{r+\f{\sigma}2}}ds
\leq C\int_0^t\|\om^{in}\|_{X^{r+\f{\sigma}2}}^2ds,\label{est: h^01}
\end{align}
 and for $\sigma\in[\f89,1]$,
\begin{align}
&\int_0^t \big|\e|D|(h^0,h^1)\big|^2_{X^{r+1-\f{3\sigma}{4}}}ds\nonumber\\
&\qquad\leq  C\int_0^t\big(\|P_{\geq N(\e)}(\pa_y u^R,\e^2\pa_x v^R) \|_{X^{r+1-\sigma}}^2+\|\om^{in}\|_{X^{r+\f{\sigma}2}}^2\big)ds.\label{est: h^01 L}
\end{align}
\end{lemma}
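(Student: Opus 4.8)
The plan is to start from the explicit representations \eqref{eq:h0}, \eqref{eq:h1}, so that estimating $h^0,h^1$ amounts to bounding the $y$-integrals $\int_0^1 G_j\,\mathcal F\big(g_1\big)\,dy$ ($j=0,1$) with $g_1=u^p\om^R+\pa_x^{-1}v^R\,\pa_y\om^p$, using Cauchy--Schwarz in $y$ together with the kernel bounds \eqref{eq:K12-est}, \eqref{eq:G23-est} and the Gevrey product estimates (the $y$-parametrized analogue of Lemma \ref{lem:product-Gev}) to peel off the smooth profiles $u^p,\om^p,\pa_y\om^p$. The structural fact that drives everything is that both $u^p$ and $\pa_x^{-1}v^R$ vanish at $y=0$ and $y=1$ (the former by no-slip, the latter by the piecewise definition of $\pa_x^{-1}v^R$), so the integrand $g_1$ carries the boundary weight $\varphi(y)=y(1-y)$. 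Writing $u^p=\varphi\,\tilde g^p$ with $\tilde g^p$ still Gevrey and bounded, this weight can be transferred onto the kernel, and from the explicit forms \eqref{def:G0}, \eqref{def:G1} one checks $\|\varphi G_j\|_{L^2_y}\le C(\e|k|)^{-1}\|G_j\|_{L^2_y}$ in the boundary-layer regime $\e|k|\gtrsim1$. Throughout I decompose $\om^R=\om^{in}+\om^{bl}$ and the velocities accordingly, controlling the interior contributions by Lemma \ref{lem:uin-win} and the boundary-layer contributions by Lemma \ref{lem:lift}.

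For \eqref{est: h^01} only the crude bound $\|G_j\|_{L^2_y}\le C$ is needed. Cauchy--Schwarz and the product estimate give $|(h^0,h^1)|_{X^{r+\sigma/2}}\le C\|\om^R\|_{X^{r+\sigma/2}}+C\|u^R\|_{X^{r+\sigma/2}}$; the interior part is $\le C\|\om^{in}\|_{X^{r+\sigma/2}}$ by the first inequality of Lemma \ref{lem:uin-win} (the stray $|h|$ term there being absorbable), while the boundary-layer part is controlled by $\int_0^t\|u^{bl}\|_{X^{r+\sigma/2}}^2$ and $\int_0^t\|(y-i)\om^{bl}\|_{X^{r+\sigma/2}}^2$, the weight $(y-i)$ coming from the boundary vanishing of $u^p$. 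By Lemma \ref{lem:lift} both are $\le\frac{C}{\beta^{5/2}}\int_0^t|h^i|_{X^{r+1-3\sigma/4}}^2$, and since $\sigma\ge\frac45$ gives $r+1-\frac{3\sigma}4\le r+\frac\sigma2$, this is $\le\frac{C}{\beta^{5/2}}\int_0^t|(h^0,h^1)|_{X^{r+\sigma/2}}^2$, which for $\beta\ge\beta_*$ is absorbed into the left-hand side. This is exactly where $\sigma\ge\frac45$ enters.

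For \eqref{est: h^01 L} I insert the factor $\e|k|$ and split the frequency sum at $N(\e)$, exploiting the two relations built into its definition: $\e\langle k\rangle\le\langle k\rangle^{\sigma/2}$ for $|k|\le N(\e)$ and $\e^{-1}\le\langle k\rangle^{1-\sigma/2}$ for $|k|\ge N(\e)$. On high frequencies I transfer the boundary weight onto the kernel, so $\e|k|\|\varphi G_j\|_{L^2_y}\le C(\e|k|)^{-1/2}$; the resulting coefficient $\e^{-1}\langle k\rangle^{2r+1-3\sigma/2}$ is $\le C\langle k\rangle^{2(r+1-\sigma)}$ precisely because $\e^{-1}\le\langle k\rangle^{1-\sigma/2}$ there, so this piece is bounded by $\|P_{\ge N(\e)}(\pa_y u^R,\e^2\pa_x v^R)\|_{X^{r+1-\sigma}}^2$ after writing $\om^R=\pa_y u^R-\e^2\pa_x v^R$ (the velocity term $\pa_x^{-1}v^R$ being handled through its boundary vanishing and the relations \eqref{eq:uR-elliptic} and Lemma \ref{lem:uin-win}). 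On low frequencies I again transfer the weight onto the kernel for the interior part, where now $\e|k|\|\varphi G_j\|_{L^2_y}\le C$, giving $\le C\|\om^{in}\|_{X^{r+1-3\sigma/4}}\le C\|\om^{in}\|_{X^{r+\sigma/2}}$ (here $\sigma\ge\frac45$). For the low-frequency boundary-layer part I instead keep the weight on $\om^{bl}$ and use $\e|k|\|G_j\|_{L^2_y}\le C\langle k\rangle^{\sigma/4}$, reducing matters to $\int_0^t\|(y-i)\om^{bl}\|_{X^{r+1-\sigma/2}}^2$ and $\int_0^t\|u^{bl}\|_{X^{r+1-\sigma/2}}^2$; by Lemma \ref{lem:lift} these are $\le\frac{C}{\beta^{5/2}}\int_0^t|h^i|_{X^{r+2-7\sigma/4}}^2$, and $\sigma\ge\frac89$ is precisely what yields $r+2-\frac{7\sigma}4\le r+\frac\sigma2$, after which the already-established \eqref{est: h^01} bounds this by $\int_0^t\|\om^{in}\|_{X^{r+\sigma/2}}^2$.

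The main obstacle is the bookkeeping in the second estimate: one must simultaneously exploit the boundary vanishing of $u^p$ and $\pa_x^{-1}v^R$ to generate the weight $\varphi$, the kernel decay of $\varphi G_j$ in the boundary-layer regime, and the two frequency relations defining $N(\e)$, and then check that every leftover boundary-layer term lands at regularity $\le r+\frac\sigma2$ so that the $\beta^{-5/2}$ gain of Lemma \ref{lem:lift} combined with \eqref{est: h^01} closes the loop. It is exactly this requirement on the low-frequency boundary-layer term that pins down $\sigma\ge\frac89$, while the high-frequency matching to the anisotropic norm $\|P_{\ge N(\e)}(\pa_y u^R,\e^2\pa_x v^R)\|_{X^{r+1-\sigma}}$ must be carried out componentwise in $(\pa_y u^R,\e^2\pa_x v^R)$ rather than through $\om^R$ directly, and care is needed with the frequency spreading produced by multiplication by $\tilde g^p$ (handled by the $P_{\ge N/2}$ version of the product estimates, the low-frequency tails being reabsorbed into the interior terms).
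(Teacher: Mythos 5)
Your proposal is correct and follows essentially the same route as the paper's own proof: Cauchy--Schwarz in $y$ with the explicit kernels, the weight $\varphi$ generated by the boundary vanishing of $u^p$ and of $\pa_x^{-1}v^R$ (Hardy), the decomposition $\om^R=\om^{in}+\om^{bl}$ treated by Lemma \ref{lem:uin-win} and Lemma \ref{lem:lift}, absorption of the stray $h$-terms for $\beta$ large (where $\sigma\ge\f45$ enters), and, for \eqref{est: h^01 L}, the frequency splitting at $N(\e)$ with the weight transferred onto the kernel, the componentwise high-frequency bound by $\|P_{\geq N(\e)}(\pa_y u^R,\e^2\pa_x v^R)\|_{X^{r+1-\sigma}}$, and the low-frequency boundary-layer term whose exponent $r+2-\f{7\sigma}4\le r+\f{\sigma}2$ is exactly what forces $\sigma\ge\f89$. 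The differences are purely cosmetic bookkeeping: the paper expresses your kernel gain as $\e|k|\,\|G_0\varphi\|_{L^2_y}\le C\k^{-\f{\sigma}4}$ on high frequencies and peels off the profile $u^p/\varphi$ via the commutator estimate of Lemma \ref{lem:com-Gev} rather than the $P_{\geq N/2}$ product estimate.
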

\begin{proof}

Let us first prove \eqref{est: h^01}. Recalling the definition of $h^0$ in \eqref{eq:h0}, we have
\begin{align*}
|h^0|_{X^{r+\f{\sigma}{2}}}=\|\k^{r+\f{\sigma}{2}}\widehat{h^0_\Phi}\|_{\ell^2_k}\leq& C\|\k^{r+\f{\sigma}{2}}\int_0^1(G_0\mathcal{F}(u^p\om^R)_\Phi)dy\|_{\ell^2_k}\\
&+C\|\k^{r+\f{\sigma}{2}}\int_0^1(G_0\mathcal{F}(\pa_x^{-1}v^R\cdot \pa_y \om^p)_\Phi)dy\|_{\ell^2_k}\\
\leq&C\|u^p\om^R\|_{X^{r+\f{\sigma}{2}}}+C\|\pa_x^{-1}v^R\cdot \pa_y \om^p\|_{X^{r+\f{\sigma}{2}}}.
\end{align*}
By Lemma \ref{lem:product-Gev}, Lemma \ref{lem:lift} and Lemma \ref{lem:uin-win}, we get
\begin{align*}
\int_0^t\|u^p\om^R\|_{X^{r+\f{\sigma}{2}}}^2ds\leq &C\int_0^t\|\varphi \om^R\|_{X^{r+\f{\sigma}{2}}}^2ds\leq C\int_0^t\|\om^{in}\|_{X^{r+\f{\sigma}{2}}}^2ds+C\int_0^t\|\varphi \om^{bl}\|_{X^{r+\f{\sigma}{2}}}^2ds\\
\leq&C\int_0^t\|\om^{in}\|_{X^{r+\f{\sigma}{2}}}^2ds+\f{C}{\beta^\f52}\int_0^t|(h^0,h^1)|_{X^{r+1-\f{3\sigma}{4}}}^2ds,
\end{align*}
and
\begin{align}
\int_0^t\|\pa_x^{-1}v^R\cdot \pa_y \om^p\|_{X^{r+\f{\sigma}{2}}}^2ds\leq &C\int_0^t\|u^R\|_{X^{r+\f{\sigma}{2}}}^2ds\leq C\int_0^t\|u^{in}\|_{X^{r+\f{\sigma}{2}}}^2ds+C\int_0^t\|u^{bl}\|_{X^{r+\f{\sigma}{2}}}^2ds\nonumber\\
\leq&C\int_0^t\|\om^{in}\|_{X^{r+\f{\sigma}{2}}}^2ds+\f{C}{\beta^\f32}\int_0^t|(h^0,h^1)|_{X^{r+1-\f{3\sigma}{4}}}^2ds.\label{eq:uR-est}
\end{align}
This shows that
\begin{align*}
\int_0^t |h^0|^2_{X^{r+\f{\sigma}2}}ds\leq C\int_0^t\|\om^{in}\|_{X^{r+\f{\sigma}{2}}}^2ds+\f{C}{\beta^\f52}\int_0^t|(h^0,h^1)|_{X^{r+1-\f{3\sigma}{4}}}^2ds.
\end{align*}
Similarly, we have
\begin{align*}
\int_0^t |h^1|^2_{X^{r+\f{\sigma}2}}ds\leq C\int_0^t\|\om^{in}\|_{X^{r+\f{\sigma}{2}}}^2ds+\f{C}{\beta^\f52}\int_0^t|(h^0,h^1)|_{X^{r+1-\f{3\sigma}{4}}}^2ds.
\end{align*}
Choosing $\beta\ge \beta^*$ with $\beta^*$ suitably large, we deduce that 
\begin{align}
\int_0^t |(h^0,h^1)|^2_{X^{r+\f{\sigma}2}}ds
\leq C\int_0^t\|\om^{in}\|_{X^{r+\f{\sigma}2}}^2ds,
\end{align}
if $1-\f{3\sigma}{4}\leq \f{\sigma}{2}$ which is equivalent to $\sigma\ge \f 45.$\medskip

Next we prove \eqref{est: h^01 L}. We have
\begin{align*}
\big|\e|D|h^0\big|_{X^{r+1-\f{3\sigma}{4}}}=&\|\k^{r+1-\f{3\sigma}{4}}\e|k| \widehat{h_{\Phi}} \|_{\ell^2_k}\\
\leq&\Big\|(1_{|k|\geq N(\e)}+1_{|k|\leq N(\e)-1})\k^{r+1-\f{3\sigma}{4}}\e|k|\int_0^1G_0\mathcal{F}(u^p\om^R)_\Phi)dy\Big\|_{\ell^2_k}\\
&+\Big\|(1_{|k|\geq N(\e)}+1_{|k|\leq N(\e)-1})\k^{r+1-\f{3\sigma}{4}}\e|k|\int_0^1G_0\mathcal{F}(\pa_x^{-1}v^R\cdot \pa_y \om^p)_\Phi)dy\Big\|_{\ell^2_k}\\
=& I_1+I_2+I_3+I_4.
\end{align*}

For $I_1,$ due to $|k|\geq N(\e)$, we have $\e(1+|k|)\geq c\k^{\f{\sigma}{2}}$ and 
\begin{align}\label{est: var G_0}
\|\e|k|\|G_0\varphi\|_{L^2_y}\leq C\e|k|\min\Big\{1,\f{1}{\e^{\f32}(1+|k|)^{\f32}}\Big\}\leq C\k^{-\f{\sigma}{4}},
\end{align}
which along with Lemma \ref{lem:com-Gev} gives
\begin{align*}
\int_0^t I_1^2ds\leq& \int_0^t \|P_{\geq N(\e)}(\f{u^p}{\varphi}\om^R)\|_{X^{r+1-\sigma}}^2ds\\
\leq &C\int_0^t\|P_{\geq N(\e)}\om^R
\|_{X^{r+1-\sigma}}^2ds+\int_0^t \|P_{\geq N(\e)}(\f{u^p}{\varphi}\om^R)_\Phi-\f{u^p}{\varphi}P_{\geq N(\e)}\om^R_\Phi\|^2_{L^2_y(H^{r+1-\sigma}_x)}ds\\
\leq&C\int_0^t \|P_{\geq N(\e)}(\pa_y u^R,\e^2\pa_x v^R)\|_{X^{r+1-\sigma}}^2ds+\int_0^t \|P_{\geq \f{N(\e)}{2}}\om^R_\Phi\|_{L^2_y(H^{r+1-\sigma+\sigma-1}_x)}^2ds\\
\leq& C\int_0^t \|P_{\geq N(\e)}(\pa_y u^R,\e^2\pa_x v^R)\|_{X^{r+1-\sigma}}^2ds+C\int_0^t \|\om^{R}\|_{X^r}^2ds.
\end{align*}
For $I_3,$ by \eqref{est: var G_0} again and Hardy's inequality
\beno
\|\f{\pa_x^{-1}v^R_\Phi}{\varphi}\|_{L^2_y}\leq C\|\pa_y\pa_x^{-1}v^R_\Phi\|_{L^2_y}\leq C\|u^R_\Phi\|_{L^2_y},
\eeno
we obtain
\begin{align*}
\int_0^t I_3^2ds\leq &C\int_0^t \|P_{\geq N(\e)}(\f{\pa_x^{-1}v^R}{\varphi}\cdot \pa_y \om^p)\|_{X^{r+1-\sigma}}^2ds\\
\leq &C\int_0^t \|P_{\geq N(\e)}u^R\|_{X^{r+1-\s}}^2ds\leq C\int_0^t \|u^R\|_{X^{r+1-\s}}^2ds.
\end{align*}

For $I_2$, using the facts that $\e|k|\|G_0\varphi\|_{L^2_y}\leq C$ and $\e|k|\|G_0\|_{L^2_y}\leq C(\e|k|)^{\f12}\leq C\k^{\f{\s}{4}}$ due to {$|k|\leq 2N(\e),$}  we get by Lemma \ref{lem:lift} and \eqref{est: h^01} that 
\begin{align*}
\int_0^t I_2^2ds\leq& \int_0^t \|{P_{\leq 2N(\e)}}\e|k|(G_0\varphi)(\f{u^p}{\varphi}\om^{in})\|_{X^{r+1-\f{3\sigma}{4}}}^2ds\\
&+\int_0^t \|{P_{\leq 2N(\e)}}\e|k|G_0(\f{u^p}{\varphi}\varphi\om^{bl})\|_{X^{r+1-\f{3\sigma}{4}}}^2ds\\
\leq& \int_0^t \|\om^{in}\|_{X^{r+1-\f{3\sigma}{4}}}^2ds
+\int_0^t \|\varphi\om^{bl}\|_{X^{r+1-\f{\sigma}{2}}}^2ds\leq C\int_0^t \|\om^{in}\|_{X^{r+\f{\sigma}{2}}}^2ds,
\end{align*}
if $1-\f{\sigma}{2}+1-\f{5\s}{4}\leq \f{\s}{2}$ which is equivalent to $\sigma\ge \f89.$
For $I_4$, we have
\begin{align*}
\int_0^t I_4^2ds\leq &C\int_0^t \|(\f{\pa_x^{-1}v^R}{\varphi}\cdot \pa_y \om^p)\|_{X^{r+1-\f{3\sigma}{4}}}^2ds
\leq C\int_0^t \|u^R\|_{X^{r+1-\f{3\sigma}{4}}}^2ds,
\end{align*}

Summing up the estimates of $I_1-I_4$, we conclude that
\begin{align*}
&\int_0^t |\e|D|h^0|_{X^{r+1-\f{3\sigma}{4}}}^2ds\\
&\leq C\int_0^t\|P_{\geq N(\e)}(\pa_y u^R,\e^2\pa_x v^R)\|_{X^{r+1-\sigma}}^2+\|\om^{in}\|_{X^{r+\f{\sigma}2}}^2+ \|\om^{R}\|_{X^r}^2+\|u^R\|_{X^{r+1-\f{3\sigma}{4}}}^2ds.
\end{align*}
On the other hand, by Lemma \ref{lem:lift} and \eqref{est: h^01}, we have
\begin{align*}
\int_0^t \|\om^{R}\|_{X^r}^2ds\leq& \int_0^t \|\om^{in}\|_{X^r}^2ds+\int_0^t \|\om^{bl}\|_{X^r}^2ds\\
\leq&\int_0^t \|\om^{in}\|_{X^r}^2ds+\f{C}{\beta^{\f32}}\int_0^t |(h^0,h^1)|_{X^{r+1-\f{3\sigma}{4}}}^2ds\\
\leq& C\int_0^t \|\om^{in}\|_{X^{r+\f{\sigma}{2}}}^2ds,
\end{align*}
and
\begin{align*}
\int_0^t \|u^R\|_{X^{r+1-\f{3\sigma}{4}}}^2ds\leq& C\int_0^t \|u^{in}\|_{X^{r+1-\f{3\sigma}{4}}}^2ds+C\int_0^t \|u^{bl}\|_{X^{r+1-\f{3\sigma}{4}}}^2ds\\
\leq& C\int_0^t \|\om^{in}\|_{X^{r+\f{\sigma}{2}}}^2ds.
\end{align*}
These show that 
\begin{align*}
&\int_0^t \big|\e|D|h^0\big|_{X^{r+1-\f{3\sigma}{4}}}^2ds\leq C\int_0^t\|P_{\geq N(\e)}(\pa_y u^R,\e^2\pa_x v^R)\|_{X^{r+1-\sigma}}^2+\|\om^{in}\|_{X^{r+\f{\sigma}2}}^2ds.
\end{align*}
The estimate of $h^1$ is similar.
\end{proof}

The following proposition is a direct consequence of Lemma \ref{lem:lift} and Lemma \ref{lem:h01-est}.

\begin{proposition}\label{prop:om-b}
Under the assumptions of Lemma \ref{lem:h01-est}, there holds that 
\begin{align*}
\sup_{s\in[0,t]}&\|\om^{bl}(s)\|_{X^{r-1+\f{3\sigma}4}}^2+\int_0^t\|(\pa_y,\e\pa_x)\om^{bl}\|_{X^{r-1+\f{3\sigma}4}}^2ds+\beta\int_0^t(\|\om^{bl}\|_{X^{r-1+\f{5\sigma}4}}^2+\|\varphi \om^{bl}\|^2_{X^{r-1+\f{7\sigma}4}})ds\\
\leq& \f{C}{\beta^\f12} \int_0^t\|\om^{in}\|_{X^{r+\f{\sigma}2}}^2ds.
\end{align*}
\end{proposition}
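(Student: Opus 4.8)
The plan is to prove the proposition as a direct consequence of Lemma \ref{lem:lift} and Lemma \ref{lem:h01-est}, the entire content being careful index bookkeeping. First I would split $\om^{bl}=\om^{b,0}+\om^{b,1}$ and treat the four quantities on the left-hand side one at a time, applying to each the appropriate inequality of Lemma \ref{lem:lift} with the regularity index in that lemma chosen so that \emph{every} resulting $|h^i|$-norm lands at exactly $X^{r+\f\s2}$. The derivative losses built into Lemma \ref{lem:lift} are precisely compensated by the index shifts $r-1+\f{3\s}4$, $r-1+\f{5\s}4$, $r-1+\f{7\s}4$ appearing in the statement, which is what makes this collapse work.

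Concretely, for $\sup_{s}\|\om^{bl}\|_{X^{r-1+\f{3\s}4}}^2$ and $\int_0^t\|(\pa_y,\e\pa_x)\om^{bl}\|_{X^{r-1+\f{3\s}4}}^2\,ds$ I would invoke the fifth and fourth inequalities of Lemma \ref{lem:lift} at index $\tilde r=r-1+\f{3\s}4$; since each loses $1-\f\s4$ derivatives with prefactor $\beta^{-1/2}$, the right-hand side becomes $\f{C}{\beta^{1/2}}\int_0^t|h^i|_{X^{r+\f\s2}}^2\,ds$. For $\beta\int_0^t\|\om^{bl}\|_{X^{r-1+\f{5\s}4}}^2\,ds$ I would use the first inequality (loss $1-\f{3\s}4$, factor $\beta^{-3/2}$) at $\tilde r=r-1+\f{5\s}4$, so that the external $\beta$ combines to $\f{C}{\beta^{1/2}}$ and the $|h^i|$-norm again sits at $X^{r+\f\s2}$. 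The genuinely weighted term $\beta\int_0^t\|\varphi\om^{bl}\|_{X^{r-1+\f{7\s}4}}^2\,ds$ is the one spot needing a small argument rather than pure bookkeeping: observing that $\varphi(y)=y(1-y)$ is bounded by $y$ near $y=0$ and by $1-y$ near $y=1$, i.e. $\varphi(y)\le|y-i|$ on $[0,1]$, I get $\|\varphi\om^{b,i}\|_{X^{\tilde r}}\le\|(y-i)\om^{b,i}\|_{X^{\tilde r}}$, which lets me apply the second (weighted) inequality of Lemma \ref{lem:lift} with $\ell=1$. That estimate loses $1-\f{5\s}4$ derivatives with factor $\beta^{-5/2}$, and at $\tilde r=r-1+\f{7\s}4$ it again produces $|h^i|_{X^{r+\f\s2}}$, the external $\beta$ leaving a final prefactor $\beta^{-3/2}\le\beta^{-1/2}$.

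Having reduced all four contributions to a multiple of $\int_0^t|(h^0,h^1)|_{X^{r+\f\s2}}^2\,ds$ with prefactor at most $C\beta^{-1/2}$, the closing step is to apply estimate \eqref{est: h^01} of Lemma \ref{lem:h01-est}, which controls $\int_0^t|(h^0,h^1)|_{X^{r+\f\s2}}^2\,ds$ by $C\int_0^t\|\om^{in}\|_{X^{r+\f\s2}}^2\,ds$; this is exactly the point at which the hypothesis $\s\ge\f45$ is used. Summing yields the asserted bound with constant $C\beta^{-1/2}$. I do not anticipate any real obstacle: the argument is entirely in matching indices so that each application of Lemma \ref{lem:lift} outputs precisely the $X^{r+\f\s2}$-norm of $(h^0,h^1)$ that Lemma \ref{lem:h01-est} absorbs, and in verifying that every power of $\beta$ is no worse than $\beta^{-1/2}$. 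The weight comparison $\varphi\le|y-i|$ for the $\varphi\om^{bl}$ term is the only estimate that is not immediate from quoting the preceding lemmas verbatim.
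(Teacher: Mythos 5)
Your proposal is correct and is precisely the argument the paper intends: the paper states this proposition without proof as ``a direct consequence of Lemma \ref{lem:lift} and Lemma \ref{lem:h01-est}'', and your index bookkeeping (choosing $\tilde r$ in each inequality of Lemma \ref{lem:lift} so that every $|h^i|$-norm lands at $X^{r+\f{\sigma}2}$, with the pointwise bound $\varphi(y)\le |y-i|$ on $[0,1]$ reducing the weighted term to the $(y-i)^\ell$ estimate with $\ell=1$, followed by \eqref{est: h^01}) is exactly the omitted verification, with all exponents and powers of $\beta$ checking out.
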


Let us conclude this section by the estimates of $h^0_l, h^1_l$.

\begin{lemma}\label{lem:h01l-est}
Under the assumptions of Lemma \ref{lem:h01-est}, there holds that 
\beno
&&\int_0^t |(h^0_l,h^1_l)|_{X^{r}}^2ds\leq Ct \e^4+C\int_0^t\|\om^{in}\|_{X^{r}}^2ds +C \int_0^t\|\pa_y\om^{in}\|_{X^{r}}^2ds.
\eeno
\end{lemma}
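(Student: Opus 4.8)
The plan is to substitute the definitions \eqref{eq:h2}, \eqref{eq:h3} and estimate the four resulting groups of terms separately; since $G_1,G_3$ satisfy the same bounds \eqref{eq:K12-est}, \eqref{eq:G23-est} as $G_0,G_2$, it suffices to treat $h^0_l$. The four groups are the boundary-layer kernel term $\int_0^1 G_2\mathcal{F}(v^p\om^R)\,dy$, the two bounded-kernel terms $\int_0^1 G_0\mathcal{F}(u^R\pa_x\om^p)\,dy$ and $\int_0^1 G_0\mathcal{F}(\pa_x^{-1}v^R\,\pa_x\pa_y\om^p)\,dy$, and the forcing term $\e^2\int_0^1 G_0\mathcal{F}(f_1+f_2)\,dy$. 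The main tools are $\|G_0\|_{L^2_y}\le C$ versus $\|G_2\|_{L^2_y}\le C(\e\langle k\rangle)^{\f12}$ from \eqref{eq:K12-est}--\eqref{eq:G23-est}, the product estimate Lemma \ref{lem:product-Gev}, the elliptic bound \eqref{eq:uR-elliptic}, and the boundary-layer estimates Lemma \ref{lem:lift} and Proposition \ref{prop:om-b}. The $Ct\e^4$ is produced solely by the purely Prandtl part of the forcing: $f_2$ depends only on $u^p,v^p$, hence is bounded in $X^r$ for $r\le N_0-5$ by Theorem \ref{thm:HyNS}, so $\big\|\e^2\int_0^1 G_0\mathcal{F}(f_2)\,dy\big\|_{X^r}\le C\e^2$ and its square integrates to $\le Ct\e^4$. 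The $f_1$ part carries the factor $\e^2$ together with $u^R$ or $v^R$; via $\|\e v^R\|_{X^r}\le C\|\om^R\|_{X^r}$ from \eqref{eq:uR-elliptic} it is an $O(\e)$ multiple of $\|\om^R\|_{X^r}$, and is absorbed after splitting $\om^R=\om^{in}+\om^{bl}$ and using $\e\le 1$ on the interior part.

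The delicate term is $\int_0^1 G_2\mathcal{F}(v^p\om^R)\,dy$, because the kernel $G_2=\pa_yG_0$ grows like $(\e\langle k\rangle)^{\f12}$ and a direct bound would lose half an $x$-derivative. The key step is to integrate by parts in $y$: since $v^p|_{y=0,1}=0$, the boundary contributions vanish and
\begin{align*}
-\tfrac12\int_0^1 G_2\mathcal{F}(v^p\om^R)\,dy=\tfrac12\int_0^1 G_0\,\mathcal{F}\big(\pa_y(v^p\om^R)\big)\,dy,\qquad \pa_y(v^p\om^R)=-\pa_xu^p\,\om^R+v^p\pa_y\om^R,
\end{align*}
where I used the divergence-free identity $\pa_yv^p=-\pa_xu^p$. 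Now $\|G_0\|_{L^2_y}\le C$ turns this into $C\|\pa_y(v^p\om^R)\|_{X^r}$. Since $u^p(x,0)=u^p(x,1)=0$ forces $\pa_xu^p=O(\varphi)$, I write $\pa_xu^p\,\om^R=(\pa_xu^p/\varphi)\,\varphi\om^R$ and bound $\|\pa_xu^p\,\om^R\|_{X^r}\le C\|\varphi\om^R\|_{X^r}$ by Lemma \ref{lem:product-Gev}; the remaining $v^p\pa_y\om^R$ is estimated directly, its interior part $v^p\pa_y\om^{in}$ producing exactly the term $\|\pa_y\om^{in}\|_{X^r}$ on the right.

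It then remains to convert every $\om^R,u^R,v^R$-quantity into an interior quantity at regularity $r$ without losing $x$-derivatives. For this I split $\om^R=\om^{in}+\om^{bl}$, $u^R=u^{in}+u^{bl}$, and exploit that $u^p,\pa_xu^p,v^p$ vanish on $\{y=0,1\}$ to the right order — in particular $v^p=O(\varphi^2)$, because $\pa_yv^p|_{y=0,1}=-\pa_xu^p|_{y=0,1}=0$ — so that each boundary-layer contribution carries a weight $(y-i)$ or $(y-i)^2$ and the weighted inequalities of Lemma \ref{lem:lift} apply. Concretely, $\|\varphi\om^{bl}\|_{X^r}$ and $\|\varphi^2\pa_y\om^{bl}\|_{X^r}$ are bounded, after invoking \eqref{est: h^01}, by $\tfrac{C}{\beta^{5/2}}\int_0^t\|\om^{in}\|_{X^{r+1-\frac{5\sigma}{4}}}^2\,ds\le \tfrac{C}{\beta^{5/2}}\int_0^t\|\om^{in}\|_{X^r}^2\,ds$, the last inequality being valid precisely because $\sigma\ge\tfrac45$; likewise $\int_0^t\|u^{bl}\|_{X^r}^2\,ds$ and $\int_0^t\|u^{in}\|_{X^r}^2\,ds$ reduce to $C\int_0^t\|\om^{in}\|_{X^r}^2\,ds$ through the velocity bounds of Lemma \ref{lem:lift} and Lemma \ref{lem:uin-win} used at regularity $r-\tfrac{\sigma}2$, and the same routine handles $\|u^R\|_{X^r}$ in the two bounded-kernel groups and $\pa_x^{-1}v^R$ (bounded in $X^r$ by $\|u^R\|_{X^r}$). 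Summing the four groups yields the claimed estimate.

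The main obstacle is this last bookkeeping rather than any single estimate: after the integration by parts the corrector is only controlled with a derivative gain measured by powers of $\beta$ against the weight, so one must match the vanishing order of the Prandtl coefficients to the weight exponent in Lemma \ref{lem:lift} so that the residual regularity $r+1-\tfrac{5\sigma}{4}$ (or $r+1-\tfrac{3\sigma}{4}$ before weighting) never exceeds $r$. It is exactly this matching that both pins down the restriction $\sigma\ge\tfrac45$ and explains why the right-hand side needs only $\|\om^{in}\|_{X^r}$ and $\|\pa_y\om^{in}\|_{X^r}$, with no gain of $x$-regularity.
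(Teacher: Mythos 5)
Your proof is correct, and apart from one step it coincides with the paper's: the same decomposition of $h^0_l$ into the $G_2$ term, the two $G_0$-kernel terms, and the $\e^2(f_1+f_2)$ forcing; the same treatment of the $f_1$ piece via \eqref{eq:uR-elliptic}, of the $f_2$ piece as the sole source of $Ct\e^4$, of the bounded-kernel terms via Hardy's inequality and the splitting $u^R=u^{in}+u^{bl}$; and the same bookkeeping for the corrector (weighted estimates of Lemma \ref{lem:lift} with $\ell=1$, then \eqref{est: h^01} at a shifted index, with $\sigma\ge\f45$ guaranteeing $r+1-\f{5\sigma}4\le r$). The genuine difference is in the critical $G_2$ term. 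The paper does not integrate by parts: it keeps the kernel $G_2$, uses $\|G_2\|_{L^1_y}\le C$ (the $s=1$ case of \eqref{eq:G23-est}, in which the $\e\langle k\rangle$ growth disappears) against $v^p\om^R$ in $L^\infty_y$, writes $v^p\om^R=\f{v^p}{\varphi^2}\,\varphi^2\om^R$, and controls $\|\varphi^2\om^R_\Phi\|_{H^r_x(L^\infty_y)}$ by the Gagliardo--Nirenberg inequality \eqref{equality: GN}; this interpolation is where $\|\pa_y\om^{in}\|_{X^r}$ enters there. You instead move $\pa_y$ off the kernel onto $v^p\om^R$ (legitimate, since $v^p|_{y=0,1}=0$ kills the boundary contributions) and use $\pa_yv^p=-\pa_xu^p$, so that $\|\pa_y\om^{in}\|_{X^r}$ comes directly from $v^p\pa_y\om^{in}$. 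The two routes are essentially dual and need the same structural inputs --- $v^p=O(\varphi^2)$, $\pa_xu^p=O(\varphi)$, and the same weighted boundary-layer quantities $\|\varphi\om^{bl}\|_{X^r}$, $\|\varphi^2\pa_y\om^{bl}\|_{X^r}$ --- so neither buys extra generality: yours avoids \eqref{equality: GN} at the cost of one more product estimate for $\f{\pa_xu^p}{\varphi}\,\varphi\om^R$, while the paper's H\"older-in-$y$ argument avoids differentiating the Prandtl coefficients. One small point to make explicit in your write-up: when you invoke \eqref{est: h^01} to bound $|(h^0,h^1)|_{X^{r+1-\f{5\sigma}4}}$, you are applying Lemma \ref{lem:h01-est} with $r$ replaced by $r+1-\f{7\sigma}4$, which must still lie in $[1,N_0-5]$; this is harmless here (the lemma is used with $r=N_0-7$, $N_0\ge 10$), and the paper is equally silent about it.
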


\begin{proof}
Recalling the definition of  \eqref{eq:h2}, we have
\begin{align*}
\int_0^t|h^0_l|_{X^r}^2ds\leq&\|\k^r\int_0^1(G_2\mathcal{F}(v^p\om^R)_\Phi)dy\|_{\ell_k^2}^2ds+\|\k^r\int_0^1(G_0\mathcal{F}(u^R\pa_x \om^p)_\Phi)dy\|_{\ell_k^2 }^2ds\\
&+\|\k^r\int_0^1(G_0\mathcal{F}(\pa_x^{-1}v^R\cdot \pa_x\pa_y \om^p)_\Phi)dy\|_{\ell_k^2}^2ds+\e^4\|\k^r\int_0^1(G_0\mathcal{F}(f_1)_\Phi)dy\|_{\ell_k^2}^2ds\\
&+\e^4\|\k^r\int_0^1(G_0\mathcal{F}(f_2)_\Phi)dy\|_{\ell_k^2}^2ds\\
=& I_1+I_2+I_3+I_4+I_5.
\end{align*}

Using the facts that $\|G_2\|_{L^1_y}\leq C$ and
$$\|\f{v^p}{\varphi^2}\|_{L^\infty_y}^2\leq C\|\pa_x\om^p\|_{L^\infty_y}^2\leq C,$$
we get by Lemma \ref{lem:lift} and  Lemma \ref{lem:h01-est} that 
\begin{align*}
I_1\leq&C\int_0^t\|\varphi^2\om^{R}_\Phi\|_{H^r_x(L_y^\infty)}^2ds\leq C\int_0^t\|\om^{in}_\Phi\|_{H^r_x(L_y^\infty)}^2ds+C\int_0^t\|\varphi^2\om^{bl}_\Phi\|_{H^r_x(L_y^\infty)}^2ds\\
\leq& C\int_0^t(\|\om^{in}\|_{X^r}^2+\|\pa_y\om^{in}\|^2_{X^r} )ds+C\int_0^t\big(\|\varphi\om^{bl}\|_{X^r}^2+\|\varphi^2\pa_y\om^{bl}\|^2_{X^r}\big)ds\\
\leq&C\int_0^t(\|\om^{in}\|_{X^r}^2+\|\pa_y\om^{in}\|^2_{X^r} )ds +\f{C}{\beta^{\f52}}\int_0^t|(h^0,h^1)|_{X^{r+1-\f{5\sigma}4}}^2ds\\
\leq &C\int_0^t(\|\om^{in}\|_{X^r}^2+\|\pa_y\om^{in}\|^2_{X^r} )ds.
\end{align*}
Here we use the Gagliardo-Nirenberg inequality
 \begin{align}\label{equality: GN}
\|f\|_{L^\infty_y}\leq C\|f\|_{L^2_y}^\f12\big(\|f\|_{L^2_y}^\f12+\|\pa_yf\|_{L^2_y}^\f12\big).
\end{align}

Similar to  \eqref{eq:uR-est}, we have
\begin{align*}
|I_2|+|I_3|\leq&C\int_0^t\|\om^{in}\|_{X^r}^2ds+\f{C}{\beta^\f52}\int_0^t|(h^0,h^1)|_{X^{r+1-\f{5\sigma}4}}^2ds\leq C\int_0^t\|\om^{in}\|_{X^r}^2ds.
\end{align*}

For $I_4$ and $I_5$, by Lemma \ref{lem:product-Gev} and \eqref{eq:uR-elliptic}, we have 
\begin{align*}
|I_4|\leq&C\e^2\int_0^t\|(\e u^R, \e v^R)\|_{X^r}^2ds\leq C\e^2\int_0^t\|\om^R\|_{X^r}^2ds\\
\leq&C\int_0^t\big(\|\om^{in}\|_{X^r}^2+\f{1}{\beta^{\f52}}|(h^0,h^1)|_{X^{r+1-\f{5\sigma}{4}}}^2\big)ds\leq C\int_0^t\|\om^{in}\|_{X^r}^2ds
\end{align*}
and
\begin{align*}
|I_5|\leq& Ct\e^4.
\end{align*}
Collecting the estimates $I_1-I_5$, we arrive at
\beno
&&\int_0^t |h^0_l|_{X^{r}}^2ds\leq Ct \e^4+C\int_0^t\|\om^{in}\|_{X^{r}}^2ds +C\int_0^t\|\pa_y\om^{in}\|_{X^{r}}^2ds.
\eeno

The estimate of $h^1_l$ is similar.
\end{proof}

\section{Energy estimate via the vorticity equation}

By the construction of $\om^{in}$, we find that 
 \begin{align}\label{eq: om^in}
\left\{
\begin{aligned}
&\pa_t \om^{in}-\Delta_\e\om^{in}+
u^p\pa_x \om^{in}+v^p\pa_y \om^{in}+v^{in}\pa_y \om^p\\
&\qquad+\e^2(f_1+f_2)
=N(\om^R, \om^R)-u^p\pa_x \om^{bl}-u^{R}\pa_x \om^p-v^p\pa_y \om^{bl}-v^{bl}\pa_y \om^p,\\
&\pa_y \om^{in}|_{y=0}=h^0_l-\e|D|\om^{R}|_{y=0}- \pa_y(\tri_{\e,D})^{-1}(N(\om^R, \om^R))|_{y=0}\\
& \qquad \qquad \qquad-(\pa_y+\e|D|) \om^{b,1}|_{y=0}+{\f{1}{2\pi}\int_{\cS} \pa_tu^R dxdy},\\
& \pa_y \om^{in}|_{y=1}=h^1_l+\e|D|\om^{R}|_{y=1}- \pa_y(\tri_{\e,D})^{-1}(N(\om^R, \om^R))|_{y=1}\\
 & \qquad \qquad \qquad-(\pa_y -\e|D|)\om^{b,0}|_{y=1}+{\f{1}{2\pi}\int_{\cS}\pa_tu^R dxdy},\\
&\om^{in}|_{t=0}=0,
\end{aligned}
\right.
\end{align}
where $(h^0_l,h^1_l)$ is given by \eqref{eq:h2} and \eqref{eq:h3}, $(u^p,v^p)$ is the solution of \eqref{eq:HyNS} and $f_1,f_2$ are given by \eqref{eq: f_1}-\eqref{eq: f_2}. For simplicity, we denote $\mathcal{N}=N(\om^R, \om^R)$.

\begin{proposition}\label{prop:om-in}
Let $\sigma\in[\f89,1]$ and $r=N_0-7$. Then there exists $\beta_0>1$ and $\bar\d>0$ so that for all $t\in[0,T]$, $\beta\geq \beta_0$ and  $\d\in(0,\bar\d)$, there holds that 
\begin{align*}
\sup_{s\in[0,t]}&\|\om^{in}(s)\|_{X^r}^2+\int_{0}^t\|(\pa_y,\e\pa_x)\om^{in})\|_{X^r}^2ds+\beta\int_0^t\|\om^{in}\|_{X^{r+\f{\sigma}2}}^2ds\\
\leq&Ct \e^4+2\d\int_0^t\|\mathcal{N}\|_{X^{r-\f{\s}4}}^2ds+C\e^2\int_0^t\|P_{\geq N(\e)}(\pa_y,\e\pa_x)(u^R, \e v^R)\|_{X^{r+1}}^2ds\\
&+C\int_0^t\|P_{\geq N(\e)}(\pa_y,\e\pa_x)(u^R, \e v^R)\|_{X^{r+1-\sigma}}^2ds.
\end{align*}
\end{proposition}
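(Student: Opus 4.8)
The plan is to run the so-called hydrostatic energy estimate on the interior vorticity equation \eqref{eq: om^in}, testing against $\f{\D^r\om^{in}_\Phi}{\pa_y\om^p}$ rather than against $\D^r\om^{in}_\Phi$ itself. First I would apply $\D^r e^{\Phi}$ to the equation, writing $\om^{in}_\Phi=e^{\Phi}\om^{in}$, and note that the weight $\Phi=\tau(t)\k^\sigma$ generates, through $\pa_t$, the good dissipative term $-\dot\tau\k^\sigma=\beta\tau_0 e^{-\beta t}\k^\sigma$, which after the hydrostatic pairing produces the gain $\beta\int_0^t\|\om^{in}\|_{X^{r+\f\sigma2}}^2ds$ on the left-hand side. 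The horizontal and vertical viscosity $-\tri_\e\om^{in}$ yields $\int_0^t\|(\pa_y,\e\pa_x)\om^{in}\|_{X^r}^2ds$ (modulo commutators of $\pa_y$ with the $y$-dependent weight $\f{1}{\pa_y\om^p}$, which are lower order since $\pa_y\om^p\ge\d_0$ by Theorem \ref{thm:HyNS} and are absorbed).

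The heart of the argument is the treatment of the transport terms. For $v^p\pa_y\om^{in}$ the hydrostatic trick is decisive: pairing with $\f{\om^{in}}{\pa_y\om^p}$ and integrating by parts in $y$ converts the dangerous $\pa_y\om^{in}$ into a term with no derivative loss, exactly as in the model computation $\int_{\cS}v^p\pa_y\om\,\f{\om}{\pa_y\om^p}=\int_{\cS}v^p\om$ recalled in the introduction; the convexity $\pa_y\om^p\ge\d_0>0$ is what makes the weight admissible. The term $v^{in}\pa_y\om^p$, which is of size comparable to $\om^{in}$ once Lemma \ref{lem:uin-win} is invoked to bound $v^{in}$ by $\om^{in}$ (with a $\f12$-derivative gain in $\e|D|$), contributes a term absorbable into the $\beta$-gain after taking $\beta$ large. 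For the commutators arising from $[\D^r e^\Phi,u^p]\pa_x\om^{in}$ and $[\D^r e^\Phi,v^p]\pa_y\om^{in}$ I would call Lemma \ref{lem:com-Gev} with $\d$ chosen so that the output sits in $X^{r+\f\sigma2}$ and $X^r$ respectively, again absorbed by the left-hand side for large $\beta$.

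The forcing and boundary contributions are then estimated term by term using the results already assembled. The interior forcing $\e^2(f_1+f_2)$ is $O(\e^2)$ and, after Cauchy--Schwarz in time, produces the $Ct\e^4$ term. The lower-order source terms $u^p\pa_x\om^{bl}$, $u^R\pa_x\om^p$, $v^p\pa_y\om^{bl}$, $v^{bl}\pa_y\om^p$ are controlled by Proposition \ref{prop:om-b}, which bounds all boundary-layer quantities by $\beta^{-1/2}\int_0^t\|\om^{in}\|_{X^{r+\f\sigma2}}^2$, hence absorbable. The nonlinearity $\mathcal N=N(\om^R,\om^R)$ is kept on the right as $2\d\int_0^t\|\mathcal N\|_{X^{r-\f\sigma4}}^2ds$, to be closed in the later bootstrap. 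The boundary terms in the energy identity come from integration by parts in $y$ in the viscosity term and split into $h^0_l,h^1_l$, the correctors $\om^{b,i}$ at the opposite boundary, and the genuinely new pieces $\e|D|\om^R|_{y=0,1}$; I would bound $(h^0_l,h^1_l)$ by Lemma \ref{lem:h01l-est}.

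The main obstacle I expect is precisely the boundary term $\int_0^t\int_{\T}\e|D|\D^r\om^R_\Phi\,\f{\D^r\om^{in}_\Phi}{\pa_y\om^p}\big|_{y=0,1}dxds$ flagged in the introduction. Splitting $\om^R=\om^{in}+\om^{bl}$ reduces it to trace factors times $\|\om^{in}\|_{X^r}$-type norms; the $\om^{bl}$ part is handled by Proposition \ref{prop:om-b}, but the $\|\e|D|\om^{in}\|_{X^r}$ factor cannot simply be absorbed. Here the key high--low decomposition of the paper enters: for frequencies $\le 2N(\e)$ one has $\e|k|\le C\k^{\f\sigma2}$, so $\|P_{\le 2N(\e)}\e|D|\om^{in}\|_{X^r}\le C\|\om^{in}\|_{X^{r+\f\sigma2}}$, absorbable by the $\beta$-gain; for frequencies $\ge N(\e)$ one uses the Biot--Savart relation and the observation $\|P_{\ge N(\e)}f\|_{X^r}\le C\|P_{\ge N(\e)}\e f\|_{X^{r+1-\f\sigma2}}$ to trade the high-frequency part of $\e|D|\om^{in}$ for $\e^2\|P_{\ge N(\e)}(\pa_y,\e\pa_x)(u^R,\e v^R)\|_{X^{r+1}}^2$ plus a term at regularity $r+1-\sigma$, which are exactly the two surviving terms on the right-hand side of the stated inequality. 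Combining all pieces and choosing $\beta\ge\beta_0$ and $\d<\bar\d$ to absorb every term proportional to $\int_0^t\|\om^{in}\|_{X^{r+\f\sigma2}}^2ds$ yields the claimed estimate.
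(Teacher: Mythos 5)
Your overall scheme --- testing with $\f{\D^r\om^{in}_\Phi}{\pa_y\om^p}$, extracting the $\beta$-dissipation from the time derivative of the Gevrey weight, and especially the high--low frequency decomposition of $\|\e|D|\om^{in}\|_{X^r}$ in the boundary term (low frequencies absorbed via $\e|k|\le C\k^{\f{\sigma}{2}}$ for $|k|\le 2N(\e)$, high frequencies traded for $\e^2\|P_{\ge N(\e)}(\pa_y,\e\pa_x)(u^R,\e v^R)\|^2_{X^{r+1}}$ plus the $X^{r+1-\sigma}$ term) --- matches the paper. But there is a genuine gap at the heart of the argument: you have applied the hydrostatic trick to the wrong term, and consequently your treatment of the genuinely dangerous term $v^{in}\pa_y\om^p$ fails. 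The term $v^p\pa_y\om^{in}$, to which you attach the trick, is mere transport by the smooth background velocity $v^p$; it loses no derivatives and is handled by an ordinary integration by parts (it produces the paper's terms $T^2$, $T^3$). The term that loses a derivative is $v^{in}\pa_y\om^p$, because $v^{in}$ is recovered from $u^{in}$ only through $v^{in}=-\int_0^y\pa_x u^{in}dz+\cdots$. Your proposal to bound it via Lemma \ref{lem:uin-win} and absorb it into the $\beta$-gain does not work: that lemma controls $\e v^{in}$, not $v^{in}$ (the factor $\e$ is essential there), and without it one pays a full $x$-derivative, $\|v^{in}\|_{X^r}\lesssim \|u^{in}\|_{X^{r+1}}$, whereas the dissipation $\beta\int_0^t\|\om^{in}\|^2_{X^{r+\f{\sigma}{2}}}ds$ gains only $\f{\sigma}{2}\le\f12$ derivatives. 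Even in the analytic case $\sigma=1$, the product $\|\om^{in}\|_{X^{r+1}}\|\om^{in}\|_{X^r}$ is not bounded by $\|\om^{in}\|^2_{X^{r+\f12}}$, so no choice of large $\beta$ closes this term by absorption.

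What the hydrostatic weight actually buys is an exact algebraic cancellation for this term: writing $\D^r(v^{in}_\Phi\pa_y\om^p)=\pa_y\om^p\,\D^r v^{in}_\Phi+[\D^r,\pa_y\om^p]v^{in}_\Phi$, the commutator gains one derivative and is harmless (the paper's $T^4$), while the main part pairs as $\int_{\cS}\pa_y\om^p\,\D^r v^{in}_\Phi\,\f{\D^r\om^{in}_\Phi}{\pa_y\om^p}dxdy=\int_{\cS}\D^r v^{in}_\Phi\,\D^r\om^{in}_\Phi dxdy$, i.e.\ the weight cancels $\pa_y\om^p$ exactly. Substituting $\om^{in}=\pa_y u^{in}-\e^2\pa_x v^{in}-\e^2\pa_x v^{bl}$, integrating by parts and using $\pa_x u^{in}+\pa_y v^{in}=0$, the bulk contribution vanishes identically, leaving only the $\e^2\pa_x v^{bl}$ piece and boundary traces (the paper's $T^5=T^{51}+T^{52}+T^{53}$), which are then estimated by Lemmas \ref{lem:lift}, \ref{lem:uin-win} and \ref{lem:h01-est} using $\sigma\ge\f89$. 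Without this cancellation step the energy estimate cannot close, so the proposal as written is incomplete precisely where the convexity hypothesis $\pa_y\om^p\ge\d_0$ is supposed to act.
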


\begin{proof} Firstly, we derive the equation of $\om^{in}_\Phi$:
 \begin{align}\label{eq:om-in-Phi}
\left\{
\begin{aligned}
&\pa_t \om^{in}_\Phi+ \beta \D^\sigma\om^{in}_\Phi-\Delta_\e\om^{in}_\Phi+
(u^p\pa_x+v^p\pa_y) \om^{in}_\Phi+v^{in}_\Phi\pa_y \om^p=-(u^p\pa_x+v^p\pa_y) \om^{bl}_\Phi\\
&\quad-v^{bl}_\Phi\pa_y \om^p
-((u^p\pa_x\om^R)_\Phi-u^p\pa_x\om^R_\Phi)-((v^p\pa_y\om^R)_\Phi-v^p\pa_y\om^R_\Phi)\\
&\quad-((v^R\pa_y\om^p)_\Phi-v^R_\Phi\pa_y\om^p)-(u^R\pa_x \om^p-\e^2u^R\pa_x^2 v^p-\e^2v^R\pa_x\pa_y v^p+\e^2f_2)_\Phi+\mathcal{N}_\Phi
,\\
&\pa_y \om^{in}_\Phi|_{y=0}=(h^0_l)_\Phi-\e|D| \om^{R}_\Phi|_{y=0}- \pa_y(\tri_{\e,D})^{-1}\mathcal{N}_\Phi|_{y=0}\\
&\qquad\qquad\qquad-(\pa_y+\e|D|) \om^{b,1}_\Phi|_{y=0}+\Big({\f{1}{2\pi}}\int_{\cS}\pa_tu^R dxdy\Big)_\Phi,\\
&\pa_y \om^{in}_\Phi|_{y=1}=(h^1_l)_\Phi+\e|D|\om^{R}_\Phi|_{y=1}- \pa_y(\tri_{\e,D})^{-1}\mathcal{N}_\Phi |_{y=1}\\
&\qquad\qquad\qquad-(\pa_y-\e|D|) \om^{b,0}_\Phi|_{y=1}+\Big({\f{1}{2\pi}}\int_{\cS}\pa_tu^R dxdy\Big)_\Phi,\\
&\om^{in}_\Phi|_{t=0}=0.
\end{aligned}
\right.
\end{align}

The worst term in the system is $v_\Phi^{in}\pa_y \om^p$. To handle it, we use the hydrostatic trick. Taking $\D^r$ on the both sides of \eqref{eq:om-in-Phi} and taking $L^2$ inner product  with $\f{\D^r\om^{in}_\Phi}{\pa_y\om^p}$~($\pa_y\om^p\ge \delta_0$), we arrive at
\begin{align*}
\f12\f{d}{dt}&\Big\|\f{\D^r\om^{in}_\Phi}{\sqrt{\pa_y\om^p }}\Big\|_{L^2}^2+\beta \Big\|\f{\D^{r+\f{\sigma}{2}}\om^{in}_\Phi}{\sqrt{\pa_y\om^p}}\Big\|_{L^2}^2+\Big\|\f{(\e\pa_x,\pa_y)\D^r\om^{in}_\Phi}{\sqrt{\pa_y\om^p}}\Big\|_{L^2}^2\\
=&\int_{\mathcal{S}}\D^r\om_\Phi^{in}\cdot(\e\pa_x,\pa_y)\f{1}{\pa_y\om^p}\cdot (\e\pa_x,\pa_y)\D^r\om_\Phi^{in} dxdy+\int_{\mathbb{T}}\f{\D^r\pa_y\om^{in}_\Phi \D^r\om^{in}_\Phi}{\pa_y\om^p}\Big|_{y=0}^{y=1}dx\\
&+\int_{\mathcal{S}}|\D^r\om^{in}_\Phi|^2\Big(\pa_x(\f{u^p}{\pa_y\om^p})+\pa_y(\f{v^p}{\pa_y\om^p})\Big)dxdy-\int_{\mathcal{S}}\big[\D^r,u^p\pa_x+v^p\pa_y\big]\om^{in}_\Phi ~\f{\D^r\om^{in}_\Phi}{\pa_y\om^p}dxdy\\
&-\int_{\mathcal{S}}[\D^r,\pa_y\om^p]v^{in}_\Phi~\f{\D^r\om^{in}_\Phi}{\pa_y\om^p}dxdy
-\int_\mathcal{S} \D^r v^{in}_\Phi\D^r\om^{in}_\Phi dxdy-\int_\mathcal{S}\D^r(u^p\pa_x \om^{bl}_\Phi)~\f{\D^r\om^{in}_\Phi}{\pa_y\om^p} dxdy\\
&-\int_\mathcal{S} \D^R( v^p\pa_y \om^{bl}_\Phi)\f{\D^r\om^{in}_\Phi}{\pa_y\om^p} dxdy
-\int_\mathcal{S}\D^r( v^{bl}_\Phi\pa_y\om^{p} )~ \f{\D^r\om^{in}_\Phi}{\pa_y\om^p}dxdy\\
&-\int_\mathcal{S}\D^r\Big( (u^p\pa_x\om^R)_\Phi-u^p\pa_x\om^R_\Phi \Big)~ \f{\D^r\om^{in}_\Phi}{\pa_y\om^p}dxdy
-\int_\mathcal{S}\D^r\Big( (v^p\pa_y\om^R)_\Phi-v^p\pa_y\om^R_\Phi \Big)~ \f{\D^r\om^{in}_\Phi}{\pa_y\om^p}dxdy\\
&-\int_\mathcal{S}\D^r\Big( (v^R\pa_y\om^p)_\Phi-v^R_\Phi\pa_y\om^p \Big)~ \f{\D^r\om^{in}_\Phi}{\pa_y\om^p}dxdy\\
&-\int_\mathcal{S}\D^r\Big( (u^R\pa_x \om^p-\e^2u^R\pa_x^2 v^p-\e^2v^R\pa_x\pa_y v^p)_\Phi\Big)~ \f{\D^r\om^{in}_\Phi}{\pa_y\om^p}dxdy\\
&-\int_\mathcal{S}\e^2\D^r(f_2)_\Phi~ \f{\D^r\om^{in}_\Phi}{\pa_y\om^p}dxdy-\int_\mathcal{S}\D^r\mathcal{N}_\Phi~ \f{\D^r\om^{in}_\Phi}{\pa_y\om^p}dxdy
=T^0+\cdots T^{14}.
\end{align*}
Integrating on $[0,t)$ with $t\leq T$ and using $\pa_y\om^p\geq\d_0$, we obtain
\begin{align*}
\|\om^{in}(t)\|_{X^{r}}^2+2\beta\int_0^t  \|\om^{in}\|_{X^{r+\f{\sigma}2}}^2ds+2\int_0^t\|(\e\pa_x,\pa_y)\om^{in}\|_{X^{r}}^2ds\leq C\int_0^t|T^0|+\cdots+|T^{14}|ds.
\end{align*}

Now we estimate $T^i, i=0,\cdots, 14$.\medskip

\underline{Estimate of $T^0$ and $T^2$}. It is easy to get
\beno
&&\int_0^t|T^0|ds\leq C\int_0^t\|\om^{in}\|_{X^r}^2ds+\delta\int_0^t\|(\e\pa_x,\pa_y)\om^{in})\|_{X^{r}}^2ds,\\
&&\int_0^t|T^2|ds\leq C\int_0^t\|\om^{in}\|_{X^{r}}^2ds.
\eeno

\underline{Estimate of $T^1$}. By Lemma \ref{lem:lift}, Lemma \ref{lem:h01-est}, Lemma \ref{lem:h01l-est} and \eqref{eq:ut-est}, we have
\begin{align*}
&\int_0^t|T^1|ds\\
&\leq C\int_0^t\Big(\Big|\Big(h^0_l,h^1_l,(\pa_y+\e|D| )\om^{b,1}|_{y=0},(\pa_y-\e|D| ) \om^{b,0}|_{y=1}\Big)\Big|_{X^{r-\f{\sigma}4}}\\
&\quad+|\pa_y(\tri_{\e,D})^{-1}\mathcal{N}|_{y=0,1} |_{X^{r-\f{\sigma}4}}+|\int_{\cS} \pa_t u^Rdx dy|_{X^r}\Big)\times\|\om^{in}\|_{X^{r+\f{\sigma}2}}^\f12\|\pa_y\om^{in}\|_{X^{r}}^\f12ds\\
&\quad+\Big|\int_0^t\int_{\mathbb{T}}\e|D|\D^r\om^{R}_\Phi~\f{\D^r\om^{in}_\Phi}{\pa_y\om^p}|_{y=0,1}dxds\Big|\\
&\leq C\int_0^t\big(|(h^0,h^1,h^0_l,h^1_l)|_{X^{r}}+\|\mathcal{N} \|_{X^{r-\f{\s}4}}+\|\pa_y\om^R\|_{L^1}\big)\|\om^{in}\|_{X^{r+\f{\sigma}2}}^\f12\|\pa_y\om^{in}\|_{X^{r}}^\f12ds\\
&\quad+\Big|\int_0^t\int_{\mathbb{T}}\e|D|\D^r\om^{R}_\Phi~\f{\D^r\om^{in}_\Phi}{\pa_y\om^p}|_{y=0,1}dxds\Big|\\
&\leq \delta\int_0^t \|(\pa_y,\e\pa_x)\om^{in}\|_{X^{r}}^2ds+ Ct \e^4+\d\int_0^t\|\mathcal{N}\|_{X^{r-\f{\s}4}}^2ds+C\int_0^t\|\om^{in}\|_{X^{r+\f{\s}2}}^2ds\\
&\quad+\Big|\int_0^t\int_{\mathbb{T}}\e|D|\D^r\om^{R}_\Phi~\f{\D^r\om^{in}_\Phi}{\pa_y\om^p}|_{y=0,1}dxds\Big|,
\end{align*}
where we used
\begin{align*}
\int_0^t\|\pa_y \om^R\|_{L^1}^2ds\leq  C\int_0^t\|\pa_y\om^{in}\|_{L^2}^2ds+\f{C}{\beta^\f12}\int_0^t|( h^0, h^1)|_{X^r}^2ds.
\end{align*}

Let $y_0,y_1\in [0,1]$ so that 
\beno
\big|\e|D|{\om^{in}}(y_0)\big|_{X^r}\le \|\e|D|\om^{in}\|_{X^r},\quad  \big|{\om^{in}}(y_1)\big|_{X^{r+\f \sigma 2}}\le \|{\om^{in}}\|_{X^{r+\f \sigma 2}}.
\eeno
Then we infer that
\begin{align*}
&\Big|\int_0^t\int_{\mathbb{T}}\e|D|\D^r\om^{R}_\Phi~\f{\D^r\om^{in}_\Phi}{\pa_y\om^p}|_{y=0,1}dxds\Big|\\
&\leq \Big|\int_0^t\int_{\mathbb{T}}\e|D|\D^r\om^{in}_\Phi~\f{\D^r\om^{in}_\Phi}{\pa_y\om^p}\Big|_{y=y_0}dxds\Big|+\Big|\int_0^t\int_{\mathbb{T}}\e|D|\D^r\om^{bl}_\Phi~\f{\D^r\om^{in}_\Phi}{\pa_y\om^p}\Big|_{y=y_1}dxds\Big|\\
&\quad+\Big|\int_0^t\int_{0,1}^{y_0}\int_{\T}\pa_y\Big(\e|D|\D^r\om^{in}_\Phi~\f{\D^r\om^{in}_\Phi}{\pa_y\om^p}\Big)dxdyds\Big|+\Big|\int_0^t\int_{0,1}^{y_1}\int_{\T}\pa_y\Big(\e|D|\D^r\om^{bl}_\Phi~\f{\D^r\om^{in}_\Phi}{\pa_y\om^p}\Big)dxdyds\Big|\\
&\leq C\int_0^t\Big(\|\e|D|\om^{in}\|_{X^r}\|\om^{in}\|_{L^\infty_y(H^r_x)}+\|\om^{in}\|_{X^{r+\f{\s}{2}}}\|\e|D|\om^{bl}\|_{L^\infty_y(H^{r-\f{\s}{2}}_x)}\\
&\qquad+\|\e|D|\om^{in}\|_{X^r}\|\pa_y\om^{in}\|_{X^r}+\|\e|D|\om^{bl}\|_{X^r}\|\pa_y\om^{in}\|_{X^r}+\|\e|D|\pa_y\om^{bl}\|_{X^{r-\f{\s}{2}}}\|\om^{in}\|_{X^{r+\f{\s}{2}}}\Big)ds\\
&\leq C\int_0^t\big(\|\e|D|\om^{in}\|_{X^{r}}+\|\e|D| \om^{bl}\|_{X^r}\big)\big(\|\pa_y\om^{in}\|_{X^{r}}+\|\om^{in}\|_{X^{r}}\big)\\
&\qquad+\big(\|\e|D|\om^{bl}\|_{X^{r-\f{\s}2}}+\|\e|D|\pa_y\om^{bl}\|_{X^{r-\f{\s}2}}\big)\|\om^{in}\|_{X^{r+\f{\s}2}}ds.
\end{align*}
By Lemma \ref{lem:lift} and Lemma \ref{lem:h01-est}, we get
\beno
&&\int_0^t\big( \|\e|D|\om^{bl}\|_{X^{r}}^2 +\|\e|D|\pa_y\om^{bl}\|_{X^{r-\f{\s}2}}^2\big)ds\leq C\int_0^t\big|\e|D|(h^0,h^1)\big|^2_{X^{r+1-\f{3\sigma}{4}}}ds\\
&&\leq C\int_0^t\big(\|P_{\geq N(\e)}(\pa_y,\e\pa_x)(u^R, \e v^R)\|_{X^{r+1-\sigma}}^2+\|\om^{in}\|_{X^{r+\f{\sigma}2}}^2\big)ds.
\eeno 

For $\|\e|D|\om^{in}\|^2_{X^{r}}$, we divide the frequency into two parts: $|k|\geq N(\e)$ and $|k|\leq N(\e)$. When $|k|\leq N(\e),$ it holds that $\e|k|\leq C\k^{\f{\sigma}{2}},$ which gives
\begin{align*}
&\int_0^t\|P_{\leq 2N(\e)}\e|D|\om^{in}\|_{X^{r}}^2ds\leq C\int_0^t\|\om^{in}\|_{X^{r+\f{\sigma}{2}}}^2ds.
\end{align*}
For the high frequency part, by Lemma \ref{lem:lift} and Lemma \ref{lem:h01-est}, we get
\begin{align*}
\int_0^t\|P_{\geq N(\e)}\e|D|\om^{in}\|_{X^{r}}^2ds\leq& C\e^2\int_0^t\|P_{\geq N(\e)}(\pa_y,\e\pa_x)(u^R, \e v^R)\|_{X^{r+1}}^2ds+C\int_0^t\| \e|D|\om^{bl}\|_{X^{r}}^2ds\\
&\leq C\e^2\int_0^t\|P_{\geq N(\e)}(\pa_y,\e\pa_x)(u^R, \e v^R)\|_{X^{r+1}}^2ds\\
&\qquad+C\int_0^t|\e|D|(h^0,h^1)|^2_{X^{r+1-\f{3\sigma}{4}}}ds\\
\leq& C\e^2\int_0^t\|P_{\geq N(\e)}(\pa_y,\e\pa_x)(u^R, \e v^R)\|_{X^{r+1}}^2ds\\
&+C\int_0^t\big(\|P_{\geq N(\e)}(\pa_y,\e\pa_x)(u^R, \e v^R)\|_{X^{r+1-\sigma}}^2+\|\om^{in}\|_{X^{r+\f{\sigma}2}}^2\big)ds.
\end{align*}

Summing up, we arrive at 
\begin{align*}
\int_0^t&|T^1|ds\leq \delta\int_0^t \|(\pa_y,\e\pa_x)\om^{in}\|_{X^{r}}^2ds+ Ct \e^4+ C\int_0^t\|\om^{in}\|_{X^{r+\f{\sigma}2}}^2dt'+\d\int_0^t\|\mathcal{N}\|_{X^{r-\f{\s}4}}^2ds\\
&+C\e^2\int_0^t\|P_{\geq N(\e)}(\pa_y,\e\pa_x)(u^R, \e v^R)\|_{X^{r+1}}^2ds+C\int_0^t\|P_{\geq N(\e)}(\pa_y,\e\pa_x)(u^R, \e v^R)\|_{X^{r+1-\sigma}}^2ds.
\end{align*}

\underline{Estimate  of $T^3$}. By Lemma \ref{lem:com-S}, we have
\begin{align*}
\int_0^t|T^3|ds\leq& C\int_0^t(\|\om^{in}\|_{X^{r}}+\|\pa_y\om^{in}\|_{X^{r}})\|\om^{in}\|_{X^{r}}ds\\
\leq& \d\int_0^t \|\pa_y\om^{in}\|_{X^{r}}^2ds+C\int_0^t\|\om^{in}\|_{X^{r}}^2ds.
\end{align*}

\underline{Estimate of $T^4$}. By Lemma \ref{lem:com-S} and Lemma \ref{lem:uin-win}, we have
\begin{align*}
\int_0^t|T^4|ds\leq& C\int_0^t\|v^{in}\|_{X^{r-1}}\|\om^{in}\|_{X^{r}}ds\leq C\int_0^t\|\om^{in}\|_{X^{r}}^2ds
\end{align*}
where we used that fact that $v^{in}=\int_0^y \pa_x u^{in}dz-v^{bl}|_{y=0}$ and 
$$\int_0^t|v^{bl}|_{y=0}|_{X^{r-1}}^2ds\leq C\int_0^t|(h^0,h^1)|_{X^{r+1-\f{3\sigma}{2}}}^2ds\leq C\int_0^t\|\om^{in}\|_{X^{r}}^2ds$$
due to $\sigma\ge \f 89.$\smallskip

\underline{Estimate of $T^5$}. For this term, we need to use the hydrostatic trick. Integration by parts gives
\begin{align*}
T^5=&\int_\mathcal{S} \D^r v^{in}_\Phi~ \D^r(\pa_y u^{in}_\Phi-\e^2\pa_x v^{in}_\Phi-\e^2\pa_x v^{bl}_\Phi)dxdy\\
=&\int_\mathcal{S} \D^r\pa_xu^{in}_\Phi \D^ru^{in}_\Phi dxdy-\int_\mathcal{S} \e^2\D^r\pa_xv^{in}_\Phi~\D^rv^{in}_\Phi dxdy-\e^2\int_\mathcal{S}\D^r v^{in}_\Phi~ \D^r\pa_x v^{bl}_\Phi dxdy\\
&+\int_{\mathbb{T}}\D^rv^{in}_\Phi ~\D^ru^{in}_\Phi\Big|_{y=0}^{y=1}dx\\
=&-\e^2\int_\mathcal{S} \D^r v^{in}_\Phi~\D^r\pa_x v^{bl}_\Phi dxdy+\int_{\mathbb{T}} \D^r v^{in}_\Phi(1)~\D^r u^{in}_\Phi(1)dx-\int_{\mathbb{T}}\D^rv^{in}_\Phi(0)~\D^r u^{in}_\Phi(0)dx\\
=& T^{51}+T^{52}+T^{53}.
\end{align*}
We first consider the boundary term $T^{52}.$ Recalling the boundary condition
 \begin{align*}
 u^{in}(1)=-u^{bl}(1)=-(u^{b,0}(1)+u^{b,1}(1))),~v^{in}(1)=-v^{bl}(1)=-(v^{b,0}(1)+v^{b,1}(1)),
 \end{align*}
it follows from Lemma \ref{lem:lift}  and Lemma \ref{lem:h01-est} that 
\begin{align*}
\int_0^t |T^{52}|ds\leq& C\Big(\int_0^t|v^{b,i}_\Phi|_{y=1}|_{X^{r+1-\f{3\s}2}}^2ds\Big)^{\f12}\Big(\int_0^t|u^{b,i}_\Phi|_{y=1}|_{X^{r-1+\f{3\s}2}}^2ds\Big)^{\f12}\\
\leq&\f{C}{\beta^{\f52}}\Big(\int_0^t|(h^0,h^1)|_{X^{r+3-3\s}}^2ds\Big)^{\f12}\Big(\int_0^t|(h^0,h^1)|_{X^{r+\f{\s}2}}^2ds\Big)^{\f12}
\leq
\f{C}{\beta^{\f52}}\int_0^t\|\om^{in}\|_{X^{r+\f{\s}2}}^2ds,
\end{align*}
here we used $3-3\s\leq\f{\s}{2}$  due to $\sigma\geq \f89$. Similarly, we have
\begin{align*}
\int_0^t |T^{53}|ds\leq
\f{C}{\beta^{\f52}}\int_0^t\|\om^{in}\|_{X^{r+\f{\s}2}}^2ds.
\end{align*}
By Lemma \ref{lem:uin-win} and Lemma \ref{lem:lift}, we get
\begin{align*}
\int_0^t|T^{51}|ds\leq& C\int_0^t\big(\|\e  v^{in}\|_{X^{r+\f{\s}2}}^2+\|\e \pa_x v^{bl}\|_{X^{r-\f{\s}2}}^2\big)ds\\
\leq&C\int_0^t\|\om^{in}\|_{X^{r+\f{\s}2}}^2ds+\f{C}{\beta^{\f32}}\int_0^t|(h^0,h^1)|_{X^{r+1-\f{3\s}4}}^2ds+\f{C}{\beta^{\f52}}\int_0^t|(h^0,h^1)|_{X^{r-\f{\s}2+2-\f{5\s}4}}^2ds\\
\leq& C\int_0^t\|\om^{in}\|_{X^{r+\f{\s}2}}^2ds,
\end{align*}
here we used $-\f{\s}2+2-\f{5\s}4\leq \f{\s}2$ due to $\sigma\ge \f 89$. This shows that 
\begin{align*}
\int_0^t|T^5|ds\leq C\int_0^t\|\om^{in}\|_{X^{r+\f{\s}2}}^2ds.
\end{align*}

\medskip

\underline{Estimates of $T^i, i=6, 7, 8.$} By Lemma \ref{lem:lift}  and  Lemma \ref{lem:h01-est}, we get

\begin{align*}
\int_0^t|T^6|ds\leq& C\int_0^t\|\f{u^p}{\varphi}\pa_x(\varphi\om^{bl})_{\Phi}\|_{H^{r-\f{\sigma}{2},0}}\|\om^{in}_{\Phi}\|_{H^{r+\f{\sigma}{2},0}}ds\\
\leq & C \int_0^t \|\varphi\om^{bl}\|_{X^{r+1-\f{\s}{2}}}^2ds+C\int_0^t\|\om^{in}\|^2_{X^{r+\f{\s}2}} ds\\
\leq& C\int_0^t|(h^0,h^1)|^2_{X^{r+2-\f{7\s}4}}+C\int_0^1\|\om^{in}\|^2_{X^{r+\f{\s}2}} ds\\
\leq& C\int_0^t\|\om^{in}\|^2_{X^{r+\f{\s}2}} ds,
\end{align*}
where we used  $2-\f{7\s}4\leq \f{\s}2$ due to $\sigma\ge \f 89.$ Similarly, we have
\begin{align*}
\int_0^t|T^7|\leq& C\int_0^t\|\f{v^p}{\varphi^2} \varphi^2\pa_y\om^{bl}_\Phi\|_{H^{r-\f{\s}{2},0}}\|\om^{in}_\Phi\|_{H^{r+\f{\s}{2},0}} ds\\
\leq& C\int_0^t\f{1}{\beta^{\f54}}|(h^0,h^1)|_{X^{r+1-\f{7\s}4}}\|\om^{in}\|_{X^{r+\f{\s}2}}ds\\
\leq& C\int_0^t\|\om^{in}\|^2_{X^{r+\f{\s}2}} ds,
\end{align*}
and
\begin{align*}
\int_0^t|T^8|ds\leq& C\int_0^t\|v^{bl}\|_{X^{r+\f{\s}{2}}}\|\om^{in}\|_{X^{r-\f{\s}{2}}}ds\\
\leq& C\int_0^t\f{1}{\beta^{\f74}}|(h^0,h^1)|_{X^{r+2-\f{9\s}4}}\|\om^{in}\|_{X^{r+\f{\s}2}}ds\\
\leq& C\int_0^t\|\om^{in}\|^2_{X^{r+\f{\s}2}} ds.
\end{align*}

\medskip

\underline{Estimates of $T^i, i=9, 10, 11.$} 
By Lemma \ref{lem:com-Gev}, Lemma \ref{lem:lift}  and  Lemma \ref{lem:h01-est}, we have
\begin{align*}
\int_0^t|T^9|ds\leq& C\int_0^t\| (u^p\pa_x\om^R)_\Phi-u^p\pa_x\om^R_\Phi\|_{H^{r-\f{\s}{2},0}}\|\om^{in}\|_{X^{r+\f{\s}{2}}}ds\\
\leq& C\int_0^t\|\f{u^p_\Phi}{\varphi}\|_{L^\infty_y(H^{r+1}_x)}\|\varphi \om^{R}\|_{X^{r+\f{\s}{2}}}\|\om^{in}\|_{X^{r+\f{\s}{2}}}ds\\
\leq& C\int_0^t \|\om^{in}\|_{X^{r+\f{\s}{2}}}^2+\|\varphi \om^{bl}\|_{X^{r+\f{\s}{2}}}^2  ds\leq C\int_0^t \|\om^{in}\|_{X^{r+\f{\s}{2}}}^2ds.
\end{align*}
Due to $v^R=-\int_0^y\pa_x u^Rdz$, we similarly have
\begin{align*}
\int_0^t|T^{11}|ds\leq& C\int_0^t\|(v^R\pa_y\om^p)_\Phi-v^R_\Phi\pa_y\om^p \|_{H^{r-\f{\s}{2},0}}\|\om^{in}\|_{X^{r+\f{\s}{2}}}ds\\
\leq& C\int_0^t\|\pa_y\om^p_\Phi\|_{L^\infty_y(H^{r+1}_x)}\|u^{R}\|_{X^{r+\f{\s}{2}}}\|\om^{in}\|_{X^{r+\f{\s}{2}}}ds\\
\leq& C\int_0^t \|\om^{in}\|_{X^{r+\f{\s}{2}}}^2+\|u^{in}\|_{X^{r+\f{\s}{2}}}^2+\|u^{bl}\|_{X^{r+\f{\s}{2}}}^2  ds\leq C\int_0^t \|\om^{in}\|_{X^{r+\f{\s}{2}}}^2ds,
\end{align*}
and 
\begin{align*}
\int_0^t|T^{10}|ds&\leq C\int_0^t \|\varphi^2\pa_y\om^R\|_{X^r}\|\om^{in}\|_{X^r}ds\leq \d\int_0^t \|\pa_y\om^{in}\|_{X^{r}}^2ds+C\int_0^t \|\om^{in}\|_{X^{r+\f{\s}{2}}}^2ds.
\end{align*}

\underline{Estimates of $T^{i}, i=12,13,14$}.
By Lemma \ref{lem:product-Gev}, Lemma \ref{lem:lift} and Lemma \ref{lem:uin-win}, it is easy to see that
\beno
\int_0^t|T_j^{12}| ds\leq C\int_0^t \|\om^{in}\|_{X^{r+\f{\s}2}}^2 ds+C\int_0^t \|(u^R,\e v^R)\|^2_{X^{r-\f{\s}2}}ds
\leq C\int_0^t \|\om^{in}\|_{X^{r+\f{\s}{2}}}^2ds,
\eeno
and 
\beno
&&\int_0^t|T^{13}|ds \leq  Ct\e^4+C\int_0^t \|\om^{in}\|_{X^{r+\f{\s}2}}^2 ds,\\
&&\int_0^t|T^{14}|\leq C\int_0^t\|\om^{in}\|_{X^{r+\f{\s}2}}^2ds+\d\int_0^t\|\mathcal{N}\|_{X^{r-\f{\s}2}}^2ds.
\eeno

Summing up the estimates of $T^0-T^{14}$, and taking $\beta$ large enough and $\d$ small enough, we deduce the desired result.
\end{proof}

We directly deduce from Proposition \ref{prop:om-b} and Proposition \ref{prop:om-in}  that

\begin{corol}\label{cor:omR}
Under the assumption of Proposition \ref{prop:om-in}, there holds that
\begin{align*}
\sup_{s\in[0,t]}&\|\om^{R}(s)\|_{X^{r-1+\f{3\s}4}}^2++\int_0^t\|(\pa_y,\e\pa_x)\om^{R}\|_{X^{r-1+\f{3\sigma}4}}^2ds+\beta\int_0^t\big(\|\om^{R}\|_{X^{r-1+\f{5\s}4}}^2+\|\varphi \om^{R}\|_{X^{r+\f{\s}{2}}}^2\big)ds\\
 \leq &Ct \e^4+2\d\int_0^t\|\mathcal{N}\|_{X^{r-\f{\s}4}}^2ds+ C\e^2\int_0^t\|P_{\geq N(\e)}(\pa_y,\e\pa_x)(u^R, \e v^R)\|_{X^{r+1}}^2ds\\
&+C\int_0^t\|P_{\geq N(\e)}(\pa_y,\e\pa_x)(u^R, \e v^R)\|_{X^{r+1-\sigma}}^2 ds.
\end{align*}
\end{corol}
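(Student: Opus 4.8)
The plan is to deduce the corollary purely by combining Proposition \ref{prop:om-in} and Proposition \ref{prop:om-b} through the decomposition $\om^R=\om^{in}+\om^{bl}$ of \eqref{def:om-in}. The only tools are the elementary bound $\|a+b\|^2\le 2\|a\|^2+2\|b\|^2$ and the monotonicity $\|\cdot\|_{X^{r'}}\le\|\cdot\|_{X^{r}}$ for $r'\le r$ recorded in Section 2. No new analytic estimate is required; the entire content is a verification that the Gevrey indices appearing on the left-hand side of the corollary are compatible with those furnished by the two propositions, together with an absorption of the $\om^{bl}$ contribution for large $\beta$.

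First I would split each term on the left of the corollary. For $\sup_{s}\|\om^R\|_{X^{r-1+\f{3\s}4}}^2$ and $\int_0^t\|(\pa_y,\e\pa_x)\om^R\|_{X^{r-1+\f{3\s}4}}^2ds$, the inequality $\|a+b\|^2\le 2\|a\|^2+2\|b\|^2$ reduces the $\om^R$ estimate to an $\om^{in}$ part and an $\om^{bl}$ part at the same level; since $r-1+\f{3\s}4\le r$ (because $\s\le1$), monotonicity bounds the $\om^{in}$ part by the matching term of Proposition \ref{prop:om-in}, while the $\om^{bl}$ part is precisely what Proposition \ref{prop:om-b} controls. For $\beta\int_0^t\|\om^R\|_{X^{r-1+\f{5\s}4}}^2ds$ I would use $r-1+\f{5\s}4\le r+\f\s2$ (equivalently $\f{3\s}4\le1$) to bound the $\om^{in}$ part by $\beta\int_0^t\|\om^{in}\|_{X^{r+\f\s2}}^2ds$, and leave the $\om^{bl}$ part to the third term of Proposition \ref{prop:om-b}. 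For the weighted term $\beta\int_0^t\|\varphi\om^R\|_{X^{r+\f\s2}}^2ds$ I would use that $\varphi(y)=y(1-y)$ depends only on $y$ and is bounded, so that $\|\varphi\om^{in}\|_{X^{r+\f\s2}}\le C\|\om^{in}\|_{X^{r+\f\s2}}$, while for $\om^{bl}$ the index inequality $r+\f\s2\le r-1+\f{7\s}4$, i.e.\ $\s\ge\f45$, which holds since $\s\ge\f89$, lets monotonicity reduce it to the fourth term of Proposition \ref{prop:om-b}.

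At this point I would have reached
\begin{align*}
\text{LHS}\le C\big[(\mathrm{I}1)+(\mathrm{I}2)+(\mathrm{I}3)\big]+C\big[(\mathrm{B}1)+(\mathrm{B}2)+(\mathrm{B}3)+(\mathrm{B}4)\big],
\end{align*}
where $(\mathrm{I}1)$--$(\mathrm{I}3)$ and $(\mathrm{B}1)$--$(\mathrm{B}4)$ denote the terms on the left of Proposition \ref{prop:om-in} and Proposition \ref{prop:om-b}. Proposition \ref{prop:om-b} bounds the second bracket by $\f{C}{\beta^{1/2}}\int_0^t\|\om^{in}\|_{X^{r+\f\s2}}^2ds=\f{C}{\beta^{3/2}}\cdot\beta\int_0^t\|\om^{in}\|_{X^{r+\f\s2}}^2ds$, i.e.\ a $\beta^{-3/2}$-multiple of the term $(\mathrm{I}3)$, which is itself part of the left of Proposition \ref{prop:om-in} and hence majorized by its right-hand side. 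Applying Proposition \ref{prop:om-in} to the first bracket then reproduces exactly the four terms $Ct\e^4$, $\int_0^t\|\mathcal{N}\|_{X^{r-\f\s4}}^2ds$, $\e^2\int_0^t\|P_{\ge N(\e)}(\pa_y,\e\pa_x)(u^R,\e v^R)\|_{X^{r+1}}^2ds$ and $\int_0^t\|P_{\ge N(\e)}(\pa_y,\e\pa_x)(u^R,\e v^R)\|_{X^{r+1-\s}}^2ds$ appearing on the right of the corollary. Taking $\beta$ large enough absorbs the $\beta^{-3/2}$ remainder, yielding the claim.

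There is essentially no obstacle here: the one thing to watch is the arithmetic of the Gevrey indices, and every required inequality collapses to the two constraints $\f{3\s}4\le1$ and $\s\ge\f45$, both comfortably met in the admissible range $\s\in[\f89,1]$. The only point worth flagging is that the coefficient of $\int_0^t\|\mathcal{N}\|_{X^{r-\f\s4}}^2ds$ is multiplied by an absolute constant during this reduction; since $\d$ is a free small parameter to be fixed later in the bootstrap of Section 9, this is harmless and is absorbed by renaming $\d$, which is why the corollary may be recorded with the same coefficient $2\d$ as in Proposition \ref{prop:om-in}.
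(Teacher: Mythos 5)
Your proposal is correct and is essentially identical to the paper's own argument: the paper offers no separate proof, stating only that the corollary is ``directly deduced'' from Proposition \ref{prop:om-b} and Proposition \ref{prop:om-in}, which is precisely the decomposition $\om^R=\om^{in}+\om^{bl}$, the index-monotonicity checks ($r-1+\f{3\s}4\le r$, $r-1+\f{5\s}4\le r+\f{\s}2$, $r+\f{\s}2\le r-1+\f{7\s}4$ for $\s\ge\f45$), and the $\beta^{-3/2}$ absorption that you carried out. Your closing remark that the coefficient of $\int_0^t\|\mathcal{N}\|_{X^{r-\f{\s}4}}^2ds$ really comes out as $C\d$ rather than $2\d$, and is harmless because $\d$ is a free small parameter fixed only in the Section 9 bootstrap, is an accurate observation about the paper's bookkeeping rather than a gap.
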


\section{Energy estimate via the velocity equation}

In this section, we are devoted to the estimates for the high frequency part of $(u^R,\e v^R)$. In this case, we can directly use the velocity equation. Recall that $(u^R, v^R)$ satisfies
\begin{align}\label{eq: error-(u,v)-1}
\left\{
\begin{aligned}
&\pa_t u^R-{\Delta_\e u^R}+\pa_x p^R+u^p\pa_x u^R+u^R\pa_x u^p+v^R\pa_y u^p+v^p\pa_y u^R\\
&\qquad\qquad+\mathcal{N}_u-\e^2\pa_x^2 u^p=0,\\
&\e^2(\pa_t v^R-\Delta_\e v^R+u^p\pa_x v^R+u^R\pa_x v^p+v^R\pa_y v^p+v^p\pa_y v^R+\mathcal{N}_v)\\
&\qquad\qquad+\pa_y p^R+\e^2(\pa_t v^p -\e^2 \pa_x^2 v^p-\pa_y^2 v^p+u^p\pa_x v^p+v^p\pa_y v^p)=0,\\
&\pa_x u^R+\pa_y v^R=0 ,\\
&(u^R,v^R)|_{y=0}=(u^R,v^R)|_{y=1}=0,\\
&(u^R,v^R)|_{t=0}=0.
\end{aligned}
\right.
\end{align}
Here $(\mathcal{N}_u,\mathcal{N}_v)$ is nonlinear term  given  by
\begin{align*}
\mathcal{N}_u=&u^R\pa_x u^R+v^R\pa_y u^R,\quad \mathcal{N}_v=u^R\pa_x v^R+v^R\pa_y v^R.
\end{align*}

\begin{proposition}\label{prop:ur-high}
Let $\sigma\in[\f45,1]$ and $r=N_0-7$. Then there exist $\beta_1$ and  $T,\bar\d>0$, such that for any  $\d\in(0,\bar\d),~\beta\geq \beta_1$ and $t\in[0,T]$, there holds that
\begin{align*}
&\e^2\|P_{\geq N(\e)}(u^R,\e v^R)(t)\|_{X^{r+1}}^2+\beta \e^2 \int_0^t\|P_{\geq N(\e)}(u^R,\e v^R)\|_{X^{r+1+\f{\s}{2}}}^2\\
&\qquad +\int_0^t\e^2\|P_{\geq N(\e)}(\pa_y,\e\pa_x)( u^{R},\e v^R)\|_{X^{r+1}}^2\\
&\leq  C\int_0^t \|\om^{in}\|_{X^{r+\f{\s}2}}^2ds+\d\int_0^t\|P_{\geq N(\e)}(\mathcal{N}_u,\e \mathcal{N}_v)\|_{X^{r+1-\f{\s}2}}^2ds,
\end{align*}
and
\begin{align*}
&\sup_{s\in[0,t]}\|P_{\geq N(\e)}(u^R,\e v^R)(t)\|_{X^{r+1-\sigma}}^2+\beta  \int_0^t\|P_{\geq N(\e)}(u^R,\e v^R)\|_{X^{r+1-\f{\s}{2}}}^2\\
\nonumber
&\qquad +\int_0^t\|P_{\geq N(\e)}(\pa_y,\e\pa_x)( u^{R},\e v^R)\|_{X^{r+1-\s}}^2\\
\nonumber
&\leq  C\int_0^t \|\om^{in}\|_{X^{r+\f{\s}2}}^2 ds+\d\int_0^t\|P_{\geq N(\e)}(\mathcal{N}_u,\e \mathcal{N}_v)\|_{X^{r+1-\f{3\s}2}}^2ds.
\end{align*}
\end{proposition}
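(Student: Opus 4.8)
The plan is to run a weighted energy estimate directly on the velocity system \eqref{eq: error-(u,v)-1}, localized to high frequencies. First I would apply the Gevrey weight $e^{\Phi}$ (so that the time derivative produces the coercive Gevrey term $\beta\langle D\rangle^{\sigma}$) and then $\langle D\rangle^{r+1}P_{\geq N(\e)}$ — Fourier multipliers in $x$ that commute with $\pa_t,\pa_y,\pa_x,\Delta_\e$ and with the pressure, and produce only controllable commutators against the profile coefficients $u^p,v^p$. I would pair the $u^R$-equation with $\e^2\langle D\rangle^{2(r+1)}P_{\geq N(\e)}^2 u^R_\Phi$ and the $v^R$-equation with $\e^2\langle D\rangle^{2(r+1)}P_{\geq N(\e)}^2 v^R_\Phi$ and add; because the $v^R$-equation already carries an intrinsic $\e^2$, the two contributions assemble exactly into the norm $\e^2\|P_{\geq N(\e)}(u^R,\e v^R)\|_{X^{r+1}}^2$. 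The time-derivative terms give $\tfrac12\tfrac{d}{dt}$ of the energy, the $\beta\langle D\rangle^{\sigma}$ terms give the Gevrey gain $\beta\e^2\|P_{\geq N(\e)}(u^R,\e v^R)\|_{X^{r+1+\s/2}}^2$, and $-\Delta_\e$ gives, after integrating by parts in $y$ (all boundary terms vanish since $u^R|_{y=0,1}=v^R|_{y=0,1}=0$), the full dissipation $\e^2\|P_{\geq N(\e)}(\pa_y,\e\pa_x)(u^R,\e v^R)\|_{X^{r+1}}^2$. The two pressure contributions combine through $\pa_x u^R+\pa_y v^R=0$ and the same boundary condition into $-\langle p^R,\pa_x u^R+\pa_y v^R\rangle=0$, so the pressure drops out.

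It then remains to bound the right-hand side. The transport terms $u^p\pa_x$ and $v^p\pa_y$ acting on $(u^R,v^R)$ are handled by the usual transport antisymmetry (the top-order part integrates to $\tfrac12\int(\pa_x u^p+\pa_y v^p)|\cdots|^2$, bounded using incompressibility of the profile), while the $P_{\geq N(\e)}$- and $e^{\Phi}$-commutators are exactly of the form treated by Lemma \ref{lem:com-S} and Lemma \ref{lem:com-Gev} and sit at regularity at most $r+1$. The purely linear source terms ($\e^2\pa_x^2 u^p$ and the $\e^2(\cdots)$ block in the $v^R$-equation) are built from the Gevrey-smooth profile $(u^p,v^p)$, so their $P_{\geq N(\e)}$-projection is smaller than any power of $\e$ and is harmless. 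Among the stretching terms, $u^R\pa_x u^p$ pairs $u^R$ with $u^R$ and reproduces $\e^2\|P_{\geq N(\e)}u^R\|_{X^{r+1}}^2$ (Gronwall), while $v^R\pa_y v^p$ and $u^R\pa_x v^p$ pair $v^R$ with $v^R$, which absorbs the full $\e^2$ weight and again reproduces the energy. The surviving lower-order pieces are reduced to $\|\om^{in}\|_{X^{r+\s/2}}$ by the high-frequency velocity–vorticity relation of Lemma \ref{lem:uin-win}, namely $\|P_{\geq N(\e)}(u^R,\e v^R)\|_{X^{\rho}}\le C\|P_{\geq N(\e)}\om^R\|_{X^{\rho-\s/2}}$, writing $\om^R=\om^{in}+\om^{bl}$ and controlling $\om^{bl}$ by $\om^{in}$ via Proposition \ref{prop:om-b}; the nonlinear terms are simply retained on the right as $\d\int\|P_{\geq N(\e)}(\mathcal{N}_u,\e\mathcal{N}_v)\|_{X^{r+1-\s/2}}^2$ after Cauchy–Schwarz against the dissipation.

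The hard part will be the cross term $v^R\pa_y u^p$ in the $u^R$-equation, tested against $\e^2 u^R$: here $v^R$ enters at the top regularity $r+1$, yet through $v^R=-\int_0^y\pa_x u^R$ it is controlled only at the cost of a full $x$-derivative, and a crude product bound produces an unacceptable factor $\e^{-1}$. The resolution I would use is structural rather than by force: after peeling off the $P_{\geq N(\e)}/e^{\Phi}$-commutators (again Lemma \ref{lem:com-Gev}), I would use incompressibility $\pa_y v^R=-\pa_x u^R$ on the localized fields and integrate by parts in $y$, the boundary contributions vanishing because $v^R|_{y=0,1}=0$. This trades the dangerous derivative for an inverse horizontal derivative $\pa_x^{-1}$, which on $|k|\ge N(\e)$ is a genuine gain of size $N(\e)^{-1}=\e^{2/(2-\s)}$; combined with the strong anisotropic dissipation $\e^2\|\e\pa_x(u^R,\e v^R)\|_{X^{r+1}}^2$ (equivalently the $\beta$-term at $X^{r+1+\s/2}$), this term becomes absorbable once $\s$ is bounded below, the required threshold being comfortably met by the standing hypothesis $\s\in[\tfrac45,1]$. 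This $\pa_x^{-1}$-gain played against the anisotropic dissipation is the velocity-side analogue of the hydrostatic trick, and it is precisely where the high-frequency cutoff $N(\e)$ is indispensable.

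Finally I would integrate in time from the zero initial data, choose $\d$ small and $\beta$ large so as to absorb everything that was thrown onto the dissipation and the $\beta\langle D\rangle^{\sigma}$ term, and close by Gronwall; this yields the first estimate. The second estimate follows from the identical scheme run at regularity $r+1-\s$ (equivalently, by testing against the lower-order multiplier), the only change being that the retained nonlinearity then sits at $X^{r+1-3\s/2}$, so no new difficulty arises.
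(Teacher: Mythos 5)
Your overall architecture coincides with the paper's own proof: the same Gevrey-weighted, $P_{\geq N(\e)}$-localized $H^{r+1,0}$ energy estimate on \eqref{eq: error-(u,v)-1}, cancellation of the pressure and of the top-order transport terms by incompressibility, commutator control via Lemma \ref{lem:com-S} and Lemma \ref{lem:com-Gev}, reduction of the residual low-frequency pieces to $\|\om^{in}\|_{X^{r+\f{\s}2}}$ through the elliptic velocity--vorticity relation and the boundary-layer bounds, and the second inequality obtained by rerunning the scheme at regularity $r+1-\s$. However, your treatment of the one term you correctly single out as critical --- $v^R\pa_y u^p$ tested against $\e^2 u^R$ (the term $S^3$ in the paper's proof) --- has a genuine gap: the proposed ``structural'' resolution by incompressibility plus integration by parts in $y$ is circular. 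Writing, on the localized fields, $V=\pa_x^{-1}P_{\geq N(\e)}\langle D\rangle^{r+1}v^R_\Phi$ (legitimate, since all frequencies involved are nonzero), the $v^R$-factor is $\pa_x V$ and the $u^R$-factor is $-\pa_y V$, so the term is $-\e^2\int_{\cS} \pa_y u^p\,\pa_x V\,\pa_y V\,dxdy$. Because $\pa_x(V\pa_y V)-\pa_y(V\pa_x V)=0$ pointwise, every integration by parts (in $y$, then in $x$, with the boundary terms indeed vanishing) reproduces the term itself with coefficient $+1$: it cancels from both sides and leaves only an identity among lower-order quantities, never a reduction carrying the promised factor $\pa_x^{-1}$. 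The same circularity occurs if one instead substitutes $u^R=-\pa_x^{-1}\pa_y v^R$ and integrates by parts. So the claimed trade of the dangerous derivative for $N(\e)^{-1}$ cannot be obtained this way.

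What actually saves this term --- and is what the paper does --- is a direct estimate with no structure at all: Cauchy--Schwarz puts $v^R$ at $X^{r+1-\f{\s}2}$ against $u^R$ at $X^{r+1+\f{\s}2}$, and on the frequencies $|k|\geq N(\e)$ one has $1\leq \e|k|\langle k\rangle^{-\f{\s}2}$, whence
\begin{align*}
\|P_{\geq N(\e)}v^R\|_{X^{r+1-\f{\s}2}}\leq \|P_{\geq N(\e)}\e v^R\|_{X^{r+2-\s}}\leq \|P_{\geq N(\e)}\e v^R\|_{X^{r+1+\f{\s}2}},
\end{align*}
the last step using $2-\s\leq 1+\f{\s}2$, i.e.\ $\s\geq\f23$; both factors are then absorbed by the coercive term $\beta\e^2\|P_{\geq N(\e)}(u^R,\e v^R)\|_{X^{r+1+\f{\s}2}}^2$ for $\beta$ large, the commutator residues going to the right-hand side as you describe. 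You in fact state this very mechanism (the ``$N(\e)^{-1}$-gain played against the dissipation''), so the repair costs one line, but as written your key step would fail. A secondary inaccuracy: the profile sources are not ``smaller than any power of $\e$'' after applying $P_{\geq N(\e)}$, since $u^p$ is only bounded in a Gevrey norm carrying the same weight $e^{\tau\langle k\rangle^{\s}}$ as the $X^r$-norms; the projection gains only the polynomial factor permitted by the finite Sobolev gap in $N_0$. This does not matter here, because those terms carry explicit factors of $\e^2$ and are estimated by $Ct\e^4$, exactly as $S^{11},S^{12}$ in the paper.
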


\begin{proof}
Acting operator $e^{\Phi(t,D)}P_{\geq N(\e)}$ on the first two equations of \eqref{eq: error-(u,v)-1} and
taking $H^{r+1,0}$ inner product  with $P_{\geq N(\e)}(u^{R}_\Phi,v^R_\Phi)$, we get by integration by parts that
\begin{align*}
&\f12\f{d}{dt}\|P_{\geq N(\e)}(u^R,\e v^R)(t)\|_{X^{r+1}}^2+\beta  \|P_{\geq N(\e)}(u^R,\e v^R)\|_{X^{r+1+\f{\s}{2}}}^2+\|P_{\geq N(\e)}(\pa_y,\e\pa_x)( u^{R},\e v^R)\|_{X^{r+1}}^2\\
 &=\Big\langle P_{\geq N(\e)} p^R_\Phi, P_{\geq N(\e)}(\pa_x u^R_\Phi+\pa_y v^R_\Phi)\Big\rangle_{H^{r+1,0}}+\f12\Big\langle |P_{\geq N(\e)}u^{R}_\Phi|^2+|\e P_{\geq N(\e)}v^{R}_\Phi|^2,\pa_xu^p+\pa_yv^p\Big\rangle_{H^{r+1,0}}\\
 &\quad-\Big\langle P_{\geq N(\e)}(v^R\pa_yu^p)_\Phi, P_{\geq N(\e)}u^R_\Phi\Big\rangle_{H^{r+1,0}}-\Big\langle P_{\geq N(\e)}\e^2(v^R\pa_yv^p)_\Phi, P_{\geq N(\e)}v^R_\Phi\Big\rangle_{H^{r+1,0}}\\
 &\quad-\Big\langle P_{\geq N(\e)}(u^R\pa_x  u^p)_\Phi, P_{\geq N(\e)}u^R_\Phi\Big\rangle_{H^{r+1,0}}-\Big\langle P_{\geq N(\e)}\e^2(u^R\pa_x  v^p)_\Phi, P_{\geq N(\e)}v^R_\Phi\Big\rangle_{H^{r+1,0}}\\
 &\quad-\Big\langle P_{\geq N(\e)}(u^p\pa_x  u^R)_\Phi-u^p\pa_x  P_{\geq N(\e)}u^R_\Phi, P_{\geq N(\e)}u^R_\Phi\Big\rangle_{H^{r+1,0}}\\
 &\quad-\e^2\Big\langle P_{\geq N(\e)}(u^p\pa_x  v^R)_\Phi-u^p\pa_x P_{\geq N(\e)} v^R_\Phi, P_{\geq N(\e)}v^R_\Phi\Big\rangle_{H^{r+1,0}}\\
 &\quad-\Big\langle P_{\geq N(\e)}(v^p\pa_y  u^R)_\Phi-v^p P_{\geq N(\e)}\pa_y  u^R_\Phi\Big), P_{\geq N(\e)}u^R_\Phi\Big\rangle_{H^{r+1,0}}\\
 &\quad-\e^2\Big\langle P_{\geq N(\e)}(v^p\pa_y  v^R)_\Phi-v^p P_{\geq N(\e)} \pa_y v^R_\Phi, P_{\geq N(\e)}v^R_\Phi\Big\rangle_{H^{r+1,0}}\\
 &\quad+\e^2\Big\langle P_{\geq N(\e)}\pa_x^2  u^p_\Phi, P_{\geq N(\e)}u^R_\Phi\Big\rangle_{H^{r+1,0}}\\
 &\quad-\e^2\Big\langle P_{\geq N(\e)}(\pa_t v^p -\e^2 \pa_x^2 v^p-\pa_y^2 v^p+u^p\pa_x v^p+v^p\pa_y v^p)_\Phi, P_{\geq N(\e)}v^R_\Phi\Big\rangle_{H^{r+1,0}}\\
 &\quad-\Big\langle P_{\geq N(\e)}(\mathcal{N}_u)_\Phi, P_{\geq N(\e)}u^R_\Phi\Big\rangle_{H^{r+1,0}}-\e^2\Big\langle P_{\geq N(\e)}(\mathcal{N}_v)_\Phi, P_{\geq N(\e)}v^R_\Phi\Big\rangle_{H^{r+1,0}}\\
 &=S^1+\cdots+S^{14}.
\end{align*}
This gives
\begin{align*}
&\|P_{\geq N(\e)}(u^R,\e v^R)(t)\|_{X^{r+1}}^2+2\beta  \int_0^t\|P_{\geq N(\e)}(u^R,\e v^R)\|_{X^{r+1+\f{\s}{2}}}^2ds\\
&\quad+2\int_0^t\|P_{\geq N(\e)}(\pa_y,\e\pa_x)( u^{R},\e v^R)\|_{X^{r+1}}^2ds\leq C\int_0^t|S^1|+\cdots+|S^{14}|ds.
\end{align*}

Thanks to $\pa_x u^p+\pa_y v^p=0,~\pa_x u^R+\pa_y v^R=0, $ we have $S^1=S^2=0.$\smallskip

{\underline{Estimate of $S^3-S^6.$}} We get by Lemma \ref{lem:com-Gev}, Lemma \ref{lem:uin-win} and \eqref{eq:uR-elliptic} that 

\begin{align*}
\int_0^t|S^3|ds\leq& \int_0^t\Big(\|P_{\geq N(\e)}v^R_\Phi\pa_yu^p\|_{H^{r+1-\f{\s}{2},0}}+\|P_{\geq N(\e)}(v^R \pa_yu^p)_\Phi-P_{\geq N(\e)}v^R_\Phi\pa_yu^p\|_{H^{r+1-\f{\s}{2},0}}\Big)\\
&\qquad\times\|P_{\geq N(\e)} u^R\|_{X^{r+1+\f{\s}{2}}}ds\\
\leq&\int_0^t\|P_{\geq N(\e)} u^R\|_{X^{r+1+\f{\s}{2}}}^2+\|P_{\geq N(\e)} v^R\|_{X^{r+1-\f{\s}{2}}}^2+\|P_{\geq \f{N(\e)}{2}} v^R\|_{X^{r+\f{\s}{2}}}^2+\|v^R\|_{X^{\f12+}}^2ds\\
\leq&\int_0^t\|P_{\geq N(\e)} u^R\|_{X^{r+1+\f{\s}{2}}}^2+\|P_{\geq N(\e)} \e v^R\|_{X^{r+2-\s}}^2+\|P_{\geq \f{N(\e)}{2}} \e v^R\|_{X^{r+1}}^2+\|\om^R\|_{X^{\f32+}}^2ds\\
  \leq & C\int_0^t\Big(  \|P_{\geq N(\e)}(u^R,\e v^R)\|_{X^{r+1+\f{\sigma}2}}^2 + \|P_{\leq N(\e)}\om^R\|_{X^{r+1-\f{\s}2}}^2+\|\om^R\|_{X^{\f32+}}^2\Big)ds.
\end{align*}
Here we used $2-\s\leq 1+\f{\s}{2}$ and $\e|k|\geq \k^{\f{\s}{2}}$ for $|k|\geq N(\e)$. Similarly, we have
 \begin{align*}
\int_0^t|S^4|ds  \leq & C\int_0^t\Big(  \|P_{\geq N(\e)}(\e v^R)\|_{X^{r+1+\f{\sigma}2}}^2 + \|P_{\leq N(\e)}\om^R\|_{X^{r+1-\f{\s}2}}^2+\|\om^R\|_{X^{\f12+}}^2\Big)ds,
\end{align*}
and 
 \begin{align*}
\int_0^t|S^5|+|S^6|ds  \leq & C\int_0^t\Big(  \|P_{\geq N(\e)} (u^R,\e v^R)\|_{X^{r+1+\f{\sigma}2}}^2 + \|P_{\geq\f{N(\e)}{2}}u^R\|_{X^{r+\f{\s}2}}^2+\|u^R\|_{X^{\f12+}}^2\Big)ds\\
\leq&C\int_0^t\Big(  \|P_{\geq N(\e)} (u^R,\e v^R)\|_{X^{r+1+\f{\sigma}2}}^2 + \|P_{\leq N(\e)}\om^R\|_{X^{r}}^2+\|\om^R\|_{X^{\f12+}}^2\Big)ds.
\end{align*}

{\underline{Estimate of $S^7-S^{10}.$}}
By  Lemma \ref{lem:com-Gev}, Lemma \ref{lem:uin-win} and \eqref{eq:uR-elliptic}, we have
\begin{align*}
\int_0^t|S^7| ds \leq&\int_0^t\|P_{\geq N(\e)}(u^p\pa_x  u^R)_\Phi-u^p\pa_x  P_{\geq N(\e)}u^R_\Phi\|_{H^{r+1-\f{\sigma}{2},0}}\|P_{\geq N(\e)} u^R\|_{X^{r+1+\f{\sigma}{2}}}ds\\
\leq&  C\int_0^t \Big(\|P_{\geq N(\e)} u^R\|_{X^{r+1+\f{\sigma}{2}}}^2+\|P_{\geq \f{N(\e)}{2}} \varphi u^R\|_{X^{r+1+\f{\sigma}{2}}}^2+\| u^R\|_{X^{\f32+}}^2\Big)ds\\
\leq&  C\int_0^t \Big(\|P_{\geq N(\e)} u^R\|_{X^{r+1+\f{\sigma}{2}}}^2+ \|P_{\leq N(\e)}(\varphi\om^R)\|_{X^{r+1}}^2\\
&\qquad\quad+\|P_{\leq N(\e)}\om^R\|_{X^{r+1-\f{\s}2}}^2 +\|\om^R\|^2_{X^{\f32+}}\Big)ds.
\end{align*}
Similarly, we have
\begin{align*}
\int_0^t|S^8| ds
\leq C\int_0^t \Big(&\|P_{\geq N(\e)}(\e v^R)\|_{X^{r+1+\f{\sigma}{2}}}^2+ \|P_{\leq N(\e)}(\varphi\om^R)\|_{X^{r+1}}^2\\
&\quad+\|P_{\leq N(\e)}\om^R\|_{X^{r+1-\f{\s}2}}^2 + \|\om^R\|^2_{X^{\f32+}} \Big)ds,
\end{align*}
and 
\begin{align*}
\int_0^t|S^9|ds\leq& C\int_0^t\|P_{\geq N(\e)} u^R\|_{X^{r+1+\f{\s}2}}^2ds\\
&\quad+\d\int_0^t\Big(\|P_{\ge N(\e)}(\pa_y u^R)\|_{X^{r+1}}^2+\| P_{\leq N(\e)}\pa_y u^R\|_{X^{r+\f{\s}2}}^2+\| \pa_yu^R\|_{X^{\f12+}}^2\Big)ds\\
\leq& C\int_0^t \Big(\|P_{\geq N(\e)}u^R\|_{X^{r+1+\f{\sigma}{2}}}^2+\|P_{\leq N(\e)}\om^R\|_{X^{r+\f{\s}2}}^2  +\|\om^R\|^2_{X^{\f12+}} \Big)ds\\
&\quad+\d\int_0^t\|  P_{\ge N(\e)}(\pa_y u^R)\|_{X^{r+1}}^2ds,
\end{align*}
and 
\begin{align*}
\int_0^t|S^{10}|ds
\leq& C\int_0^t \Big(\|P_{\geq N(\e)}(\e v^R)\|_{X^{r+1+\f{\sigma}{2}}}^2+\|P_{\leq N(\e)}\om^R\|_{X^{r+\f{\s}2}}^2  +\|\om^R\|^2_{X^{\f12+}} \Big)ds\\
&\quad+\d\int_0^t\| P_{\ge N(\e)}(\e \pa_y v^R)\|_{X^{r+1}}^2ds.
\end{align*}

{\underline{Estimate of $S^{11}-S^{12}$.}} It is easy to see that 
\begin{align*}
\int_0^t|S^{11}|+|S^{12}|ds\leq&Ct\e^4+C\int_0^t\| P_{\geq N(\e)}u^R\|_{X^{r+1+\f{\s}2}}^2ds,
\end{align*}
where we used $v^R=-\int_0^y\pa_xu^Rdy'$ and integration by parts for $S^{12}$.\smallskip

{\underline{Estimate of $S^{13}-S^{14}$.}} It is easy to see that 
\begin{align*}
\int_0^t|S^{13}|+|S^{14}|ds\leq C\int_0^t\|P_{\geq N(\e)} (u^R,\e v^R)\|_{X^{r+1+\f{\sigma}2}}^2ds+\d\int_0^t\|P_{\geq N(\e)}(\mathcal{N}_u,\e \mathcal{N}_v)\|_{X^{r+1-\f{\s}2}}ds.
\end{align*}

Summing up the estimates of $S^1-S^{14}$, and then taking $\beta$ large enough and $\d$ small enough,  we arrive at
\begin{align}\label{est: (u^R, v^R) 1}
&\|P_{\geq N(\e)}(u^R,\e v^R)(t)\|_{X^{r+1}}^2+\beta  \int_0^t\|P_{\geq N(\e)}(u^R,\e v^R)\|_{X^{r+1+\f{\s}{2}}}^2\\
&\quad+\int_0^t\|P_{\geq N(\e)}(\pa_y,\e\pa_x)( u^{R},\e v^R)\|_{X^{r+1}}^2\nonumber\\
\nonumber
&\leq  C\int_0^t\Big(\|P_{\leq N(\e)}(\varphi\om^R)\|_{X^{r+1}}^2+\|P_{\leq N(\e)}\om^R\|_{X^{r+1-\f{\s}2}}^2+\|\om^R\|^2_{X^{\f32+}}\Big)ds\\
\nonumber
&\qquad+\d\int_0^t\|P_{\geq N(\e)}(\mathcal{N}_u,\e \mathcal{N}_v)\|_{X^{r+1-\f{\s}2}}^2ds.
\end{align}
By Lemma \ref{lem:lift} and Lemma \ref{lem:h01-est}, we have
\begin{align*}
&\int_0^t\|\varphi\om^R\|_{X^{r+\f{\s}2}}^2ds\leq C\int_0^t\|\om^{in}\|_{X^{r+\f{\s}2}}^2+\|\varphi\om^{bl}\|_{X^{r+\f{\s}2}}^2ds\leq C\int_0^t\|\om^{in}\|_{X^{r+\f{\s}2}}^2ds,\\
&\int_0^t\|\om^R\|_{X^{\f32+}}^2ds\leq \int_0^t\|\om^R\|_{X^{r}}^2ds\leq \int_0^t\|\om^{in}\|_{X^{r}}^2+\|\om^{bl}\|_{X^{r}}^2ds\leq C\int_0^t\|\om^{in}\|_{X^{r+\f{\s}2}}^2ds,
\end{align*}
which along with   $\e|k|\leq \k^{\f{\s}2}$ for  $|k|\leq N(\e)$ give
\begin{align*}
 \int_0^t\e^2\|P_{\leq N(\e)}(\varphi\om^R)\|_{X^{r+1}}^2+\e^2\|P_{\leq N(\e)}\om^R\|_{X^{r+1-\f{\s}2}}^2ds\leq &C\int_0^t
 \|\varphi\om^R\|_{X^{r+\f{\s}2}}^2+\|\om^R\|_{X^{r}}^2ds\\
 \leq& C\int_0^t\|\om^{in}\|_{X^{r+\f{\s}2}}^2ds.
 \end{align*}
Therefore, it holds that 
\begin{align*}
&\e^2\|P_{\geq N(\e)}(u^R,\e v^R)(t)\|_{X^{r+1}}^2+\beta \e^2 \int_0^t\|P_{\geq N(\e)}(u^R,\e v^R)\|_{X^{r+1+\f{\s}{2}}}^2\\
&\qquad +\int_0^t\e^2\|P_{\geq N(\e)}(\pa_y,\e\pa_x)( u^{R},\e v^R)\|_{X^{r+1}}^2\\
&\leq  C\int_0^t \|\om^{in}\|_{X^{r+\f{\s}2}}^2ds +\d\int_0^t\|P_{\geq N(\e)}(\mathcal{N}_u,\e \mathcal{N}_v)\|_{X^{r+1-\f{\s}2}}^2ds,
\end{align*}
which gives the first result.

The second result follows by taking $r-\sigma$ instead of $r$ in \eqref{est: (u^R, v^R) 1}  and noticing that 
 \begin{align*}
 \int_0^t\|P_{\leq N(\e)}(\varphi\om^R)\|_{X^{r+1-\s}}^2+\|P_{\leq N(\e)}\om^R\|_{X^{r+1-\f{3\s}2}}^2ds\leq &C\int_0^t
 \|\varphi\om^R\|_{X^{r+\f{\s}2}}^2+\|\om^R\|_{X^{r}}^2ds\\
 \leq& C\int_0^t\|\om^{in}\|_{X^{r+\f{\s}2}}^2ds.
 \end{align*} 

This completes the proof of the proposition.
\end{proof}

\section{Nonlinear estimates}

In this section, we estimate nonlinear terms $(\mathcal{N},\mathcal{N}_u,\mathcal{N}_v)$, which are defined by
\begin{align*}
&\mathcal{N}=-u^R\pa_x \om^R  -v^R\pa_y \om^R,\\
&\mathcal{N}_u=u^R\pa_x u^R+v^R\pa_y u^R,\quad \mathcal{N}_v=u^R\pa_x v^R+v^R\pa_y v^R.
\end{align*}
For this, let us first assume the following energy bounds:
\begin{align}\label{ass:bootstrap}
\sup_{s\in[0,t]}\|\om^R(s)\|_{X^{r-1}}^2+\int_0^t\|(\pa_y \om^R,\e\pa_x\om^R)\|_{X^{r-1}}^2\leq \mathfrak{C}\e^4
\end{align}
for any $t\in[0,T].$

\begin{proposition}\label{prop:non}
Under the assumption \eqref{ass:bootstrap},  there holds that 
\begin{align*}
&\int_0^t\|\mathcal{N}\|_{X^{r-\f{\s}4}}^2ds\leq C\sup_{s\in[0,t]}\big(\|P_{\geq N(\e)}\e u^R\|_{X^{r+1}}^2+\|\om^{in}\|_{X^{r}}^2\big)\\&\qquad+C\int_0^t\e^2\| P_{\geq N(\e)}(\pa_y,\e\pa_x)u^R\|_{X^{r+1}}^2+\|\pa_y\om^{in}\|_{X^{r}}^2+\|\om^{in}\|_{X^{r+\f{\s}{2}}}^2ds,
\\
&\int_0^t\|P_{\geq N(\e)}(\mathcal{N}_u,\e \mathcal{N}_v)\|_{X^{r+1-\f{\s}2}}^2ds\leq C\int_0^t \e^2\|P_{\geq N(\e)}(\pa_y,\e\pa_x)(u^R,\e v^R)\|_{X^{r+1}}^2+\|\om^{in}\|_{X^{r+\f{\s}{2}}}^2ds,
\end{align*}
for any $t\in[0,T]$.
\end{proposition}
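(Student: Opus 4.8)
The plan is to treat both inequalities as bilinear Gevrey estimates, using the product estimate (Lemma \ref{lem:product-Gev}) and, for the projected norms, its $P_{\geq N}$ variant, together with the commutator estimate (Lemma \ref{lem:com-Gev}). For each product $f\,\pa g$ occurring in $\mathcal{N},\mathcal{N}_u,\mathcal{N}_v$, I would split it into one piece carrying all the derivatives on the undifferentiated factor and one piece carrying them on the derivative factor, choosing the placement so that the \emph{complementary} factor always sits at a Gevrey index $\leq r-1$. By the bootstrap assumption \eqref{ass:bootstrap} and the elliptic bounds \eqref{eq:uR-elliptic}, every such low-index factor --- $u^R,\e v^R,\om^R$ and their first derivatives measured in $X^{r-1}$ --- is of size $O(\e^2)$ in $L^\infty_t$ or $L^2_t$; extracting it as a supremum leaves an $O(\e^4)$ prefactor multiplying the surviving high-regularity integral. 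The $y$-dependence is dealt with by the Gagliardo--Nirenberg inequality \eqref{equality: GN}, which replaces an $L^\infty_y$ norm by the $L^2_y$ norm together with that of its $\pa_y$-derivative; the latter is exactly what generates the $\|\pa_y\om^{in}\|_{X^r}$ and $\|\pa_y\om^R\|$ contributions.

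The surviving high-regularity factors are then converted to the vorticity through $\om^R=\pa_y u^R-\e^2\pa_x v^R$ and $\om^R=\om^{in}+\om^{bl}$, the boundary-layer part being absorbed into norms of $\om^{in}$ by Lemma \ref{lem:lift} and Lemma \ref{lem:h01-est}. The decisive tool is the high--low frequency split at $|k|=N(\e)$: on $\{|k|\geq N(\e)\}$ one has $\e|k|\geq\k^{\s/2}$, so that $\|P_{\geq N(\e)}f\|_{X^\rho}\leq C\|P_{\geq N(\e)}\e f\|_{X^{\rho+1-\s/2}}$ (the observation from the introduction, with the velocity/vorticity conversion supplied by Lemma \ref{lem:uin-win}). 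This controls $v^R$ \emph{without} the usual one-derivative loss and is what produces the high-frequency terms $\|P_{\geq N(\e)}\e u^R\|_{X^{r+1}}$ (as a supremum) and the velocity-gradient norms $\e^2\|P_{\geq N(\e)}(\pa_y,\e\pa_x)u^R\|_{X^{r+1}}$, $\e^2\|P_{\geq N(\e)}(\pa_y,\e\pa_x)(u^R,\e v^R)\|_{X^{r+1}}$ appearing on the right of the two estimates (time-integrated). On $\{|k|\leq N(\e)\}$ I would instead move the excess derivatives at the cost of a positive power $N(\e)^\theta$, i.e.\ a fixed negative power of $\e$; this loss is dominated by the $O(\e^4)$ bootstrap prefactor, so that only $\|\om^{in}\|_{X^{r+\s/2}}$ survives.

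Executing this for $\mathcal{N}=-u^R\pa_x\om^R-v^R\pa_y\om^R$: for $v^R\pa_y\om^R$ I would place $v^R$ at the low index (bootstrap) against $\pa_y\om^R=\pa_y\om^{in}+\pa_y\om^{bl}$ at level $r-\s/4\leq r$, producing $\|\pa_y\om^{in}\|_{X^r}$ and, through the lift estimates, $\|\om^{in}\|_{X^{r+\s/2}}$; and symmetrically $v^R$ at the high index (a supremum bounded by $\|\om^{in}\|_{X^r}$) against $\pa_y\om^R$ at the low index. For $u^R\pa_x\om^R$ the high-regularity factor is $\pa_x\om^R$, i.e.\ $\om^R$ at level $r+1-\s/4$, which is precisely the quantity handled by the high--low split above. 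For the second inequality I would apply the $P_{\geq N(\e)}$ product estimate directly to $\mathcal{N}_u$ and $\e\mathcal{N}_v$ at level $r+1-\s/2$: since both factors are projected to high frequencies, the $\e$-gain applies throughout and yields the two right-hand terms $\e^2\|P_{\geq N(\e)}(\pa_y,\e\pa_x)(u^R,\e v^R)\|_{X^{r+1}}^2$ and $\|\om^{in}\|_{X^{r+\s/2}}^2$ more or less directly.

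The main obstacle is the term $v^R\pa_y\om^R$ (equivalently the factor $\pa_x\om^R$): since $v^R=-\int_0^y\pa_x u^R\,dz$, the naive bound $\|v^R\|_{X^\rho}\leq\|u^R\|_{X^{\rho+1}}$ loses one $x$-derivative --- the very loss that makes \eqref{eq:HyNS} ill-posed below analytic regularity --- and the product estimate alone cannot close it. The high--low decomposition with the $\e$-gain is what circumvents this, but it forces a careful index bookkeeping: one must verify that every shifted index on the high-frequency side stays $\leq r+1$, that the low-frequency derivative loss $N(\e)^\theta$ is beaten by the $O(\e^4)$ prefactor from \eqref{ass:bootstrap}, and that the boundary-layer remainders close under Lemma \ref{lem:h01-est}; this last point is where the restriction $\s\geq\f89$ enters.
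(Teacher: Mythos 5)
Your skeleton --- bilinear Gevrey estimates with the bootstrap \eqref{ass:bootstrap} supplying $O(\e^2)$ smallness per low-index factor, a high--low frequency split at $N(\e)$ exploiting $\e|k|\geq \k^{\s/2}$, conversion to velocity-gradient norms through $\om^R=\pa_yu^R-\e^2\pa_xv^R$, and absorption of $\om^{bl}$ via Lemmas \ref{lem:lift} and \ref{lem:h01-est} --- is exactly the paper's, and your treatment of $u^R\pa_x\om^R$ and of the second inequality would go through. The genuine gap is in the term $v^R\pa_y\om^R$. When you place $v^R$ at the low index against $\pa_y\om^R=\pa_y\om^{in}+\pa_y\om^{bl}$ at level $r-\f{\s}4$, you assert that the lift estimates convert $\pa_y\om^{bl}$ into $\|\om^{in}\|_{X^{r+\f{\s}2}}$. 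They do not: the \emph{unweighted} estimate of Lemma \ref{lem:lift} gives $\int_0^t\|\pa_y\om^{bl}\|^2_{X^{r-\s/4}}ds\lesssim \beta^{-1/2}\int_0^t|(h^0,h^1)|^2_{X^{r+1-\s/2}}ds$, and by \eqref{est: h^01} this costs $\int_0^t\|\om^{in}\|^2_{X^{r+1-\s/2}}ds$, a norm strictly stronger than the dissipation $\int_0^t\|\om^{in}\|^2_{X^{r+\s/2}}ds$ that the energy scheme controls whenever $\s<1$; the indices match only in the analytic case $\s=1$. The paper never lets an unweighted $\pa_y\om^{bl}$ appear: it factors $v^R\pa_y\om^R=\f{v^R}{\varphi}\cdot\varphi\pa_y\om^R$, controls $\f{v^R}{\varphi}$ by Hardy's inequality (so it is bounded by $\|\pa_xu^R\|$, hence by the bootstrap), and applies the \emph{weighted} lift estimate to $\|(y-i)\pa_y\om^{b,i}\|$, which costs only $|(h^0,h^1)|_{X^{r+1-\s}}$ and closes for $\s\geq\f89$. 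This Hardy/weight pairing is a key device of the paper's proof and is absent from your plan.

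The second half of your pairing for this term is also not justified: you claim $v^R$ at the high index $r-\f{\s}4$ is ``a supremum bounded by $\|\om^{in}\|_{X^r}$''. Recovering $v^R$ costs either one $x$-derivative ($v^R=-\int_0^y\pa_xu^R\,dz$) or one power of $\e$ (the elliptic bound \eqref{eq:uR-elliptic} controls $\e v^R$, not $v^R$), and neither $\sup_s\|u^R\|_{X^{r+1-\s/4}}$ nor $\e^{-1}\sup_s\|\om^R\|_{X^{r-\s/4}}$ is available. The paper's fix is an $\e$-swap your proposal does not contain: it pairs $\e v^R$ (bounded through $\|\e\pa_xu^R\|_{X^{r-\s/4}}$) against $\f{\pa_y\om^R}{\e}$, whose time integral $\int_0^t\|\pa_y\om^R/\e\|^2_{X^{\f12+}}ds\leq C\e^2$ is supplied by \eqref{ass:bootstrap}; only after this swap does the high--low split, applied to $\e^2\pa_xu^R$, produce precisely the terms $\sup_s\|P_{\geq N(\e)}\e u^R\|^2_{X^{r+1}}$ and $\sup_s\|\om^{in}\|^2_{X^{r}}$ appearing on the right of the first inequality. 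Without the weighted pairing and the $\e$-swap, your estimate of $v^R\pa_y\om^R$ does not close for $\s\in[\f89,1)$.
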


\begin{proof}
By the definition of $\mathcal{N}$, we have
\begin{align*}
\int_0^t\|\mathcal{N}\|_{X^{r-\f{\s}4}}^2ds\leq \int_0^t\|u^R\pa_x \om^R\|_{X^{r-\f{\s}4}}^2ds+\int_0^t\|v^R\pa_y \om^R\|_{X^{r-\f{\s}4}}^2ds=I_1+I_2.
\end{align*}
It follows from Lemma \ref{lem:product-Gev} and \eqref{eq:uR-elliptic} that 
\begin{align*}
I_1\leq& \int_0^t \|\f{u^R_\Phi}{\e}\|_{L^\infty_y(H^{\f12+}_x)}^2\|\e\pa_x\om^R\|_{X^{r-\f{\s}4}}^2+\|u^R_\Phi\|_{L^\infty_y(H^{r-\f{\s}4}_x)}^2\|\pa_x\om^R\|_{X^{\f12+}}^2ds\\
\leq &\int_0^t\big(\|\f{u^R}{\e}\|_{X^{\f12+}}^2+\|\f{\pa_yu^R}{\e}\|_{X^{\f12+}}^2\big)\|\e\pa_x\om^R\|_{X^{r-\f{\s}4}}+\big(\|u^R\|_{X^{r-\f{\s}4}}^2+\|\pa_y u^R\|_{X^{r-\f{\s}4}}^2\big)\|\om^R\|_{X^{\f32+}}^2ds\\
\leq&\int_0^t \|\f{\om^R}{\e}\|_{X^{r-1}}^2\|\e\pa_x\om^R\|_{X^{r-\f{\s}4}}^2+\|\om^R\|_{X^{r-\f{\s}4}}^2\|\om^R\|_{X^{r-1}}^2ds\\
\leq& C\int_0^t\e^2\|\e\pa_x\om^R\|_{X^{r-\f{\s}4}}^2+\e^4\|\om^R\|_{X^{r-\f{\s}4}}^2ds,
\end{align*}
from which and the fact that 
\begin{align}\label{est: e^2pa_x om^R}
\|\e^2\pa_x\om^R\|_{X^{r-\f{\s}4}}\leq&\|\e^2P_{\geq N(\e)}\pa_x\om^R\|_{X^{r-\f{\s}4}}+\|\e^2{P_{\leq 2N(\e)}}\pa_x\om^R\|_{X^{r-\f{\s}4}}\\
\nonumber
\leq&C\e\| P_{\geq N(\e)}\e(\pa_y,\e\pa_x)(u^R,\e v^R)\|_{X^{r+1}}+C\|\om^R\|_{X^{r-1+\f{3\s}{4}}},
\end{align}
we infer that
\begin{align*}
I_1\leq C\int_0^t\e^2\| P_{\geq N(\e)}(\pa_y,\e\pa_x)(u^R,\e v^R)\|_{X^{r+1}}^2+\|\om^{in}\|_{X^{r}}^2ds.
\end{align*}

By Lemma \ref{lem:product-Gev} again, we get
\begin{align*}
I_2\leq& \int_0^t \int_0^t \|\f{v^R_\Phi}{\varphi}\|_{L^\infty_y(H^{\f12+}_x)}^2\|\varphi\pa_y\om^R\|_{X^{r-\f{\s}4}}^2+\|\e v^R_\Phi\|_{L^\infty_y(H^{r-\f{\s}4}_x)}^2\|\f{\pa_y\om^R}{\e}\|_{X^{\f12+}}^2ds\\
\leq&\int_0^t\|\om^R\|_{X^{\f32+}}^2\|\varphi\pa_y\om^R\|_{X^{r-\f{\s}4}}^2+\big(\|\e v^R\|_{X^{r-\f{\s}4}}^2+\|\e\pa_x u^R\|_{X^{r-\f{\s}4}}^2\big)\|\f{\pa_y\om^R}{\e}\|_{X^{\f12+}}^2ds\\
\leq& C\int_0^t\e^4\|\varphi\pa_y\om^R\|_{X^{r-\f{\s}4}}^2ds+\sup_{s\in[0,t]}\e^2\|\e\pa_x u^R\|_{X^{r-\f{\s}4}}^2.
\end{align*}
On the other hand, by Lemma \ref{lem:lift}, Lemma \ref{lem:h01-est} and \eqref{eq:uR-elliptic}, we have
\begin{align*}
\int_0^t\|\varphi\pa_y\om^R\|_{X^{r-\f{\s}4}}^2ds\leq& \int_0^t\|\pa_y\om^{in}\|_{X^{r-\f{\s}4}}^2+\|\varphi\pa_y\om^{bl}\|_{X^{r-\f{\s}4}}^2ds\\
\leq& C\int_0^t\|\pa_y\om^{in}\|_{X^{r}}^2+|(h^0,h^1)|_{X^{r+1-\s}}^2ds\\
\leq&C\int_0^t\|\pa_y\om^{in}\|_{X^{r}}^2+\|\om^{in}\|_{X^{r+\f{\s}{2}}}^2ds,
\end{align*}
and 
\begin{align*}
\|\e^2\pa_x u^R\|_{X^{r-\f{\s}4}}\leq & \|\e^2P_{\geq N(\e)}\pa_x u^R\|_{X^{r-\f{\s}4}}+\|\e^2{P_{\leq 2N(\e)}}\pa_x u^R\|_{X^{r-\f{\s}4}}\\
\leq&C\|P_{\geq N(\e)}\e u^R\|_{X^{r+1}}+\|\om^R\|_{X^{r-1+\f{\s}{4}}}\\
\le& C\|P_{\geq N(\e)}\e u^R\|_{X^{r+1}}+\|\om^{in}\|_{X^{r}}.
\end{align*}
This shows that 
\begin{align*}
I_2\leq C\e^4\int_0^t\|\pa_y\om^{in}\|_{X^{r}}^2+\|\om^{in}\|_{X^{r+\f{\s}{2}}}^2ds+C\sup_{s\in[0,t]}\big(\|P_{\geq N(\e)}\e u^R\|_{X^{r+1}}^2+\|\om^{in}\|_{X^{r}}^2\big).
\end{align*}

Now the first inequality follows from the estimates of $I_1$ and $I_2$.\smallskip

Next we estimate $\mathcal{N}_u$. Recalling that $\mathcal{N}_u=u^R\pa_x u^R+v^R\pa_y u^R$, we have
\begin{align*}
\int_0^t\|P_{\geq N(\e)}\mathcal{N}_u\|_{X^{r+1-\f{\s}2}}^2ds\leq& \int_0^t\|P_{\geq N(\e)}(u^R\pa_x u^R)\|_{X^{r+1-\f{\s}2}}^2ds+\int_0^t\|P_{\geq N(\e)}(v^R\pa_y u^R)\|_{X^{r+1-\f{\s}2}}^2ds\\
=& I_3+I_4.
\end{align*}
By Lemma \ref{lem:product-Gev} and \eqref{eq:uR-elliptic}, we have
\begin{align*}
I_3\leq& \int_0^t \|u^R_\Phi\|_{L^\infty_y(H^{\f12+}_x)}^2\|P_{\geq \f{N(\e)}{2}}\pa_x u^R\|_{X^{r+1-\f{\s}2}}^2+\|P_{\geq \f{N(\e)}{2}} u^R\|_{X^{r+1-\f{\s}2}}^2\|\pa_xu^R_{\Phi}\|_{L^\infty_y(H^{\f12+}_x)}^2ds\\
\leq& C\int_0^t \|\om^R\|_{X^{\f32+}}^2\|P_{\geq \f{N(\e)}{2}}\pa_x u^R\|_{X^{r+1-\f{\s}2}}^2ds\leq C\int_0^t \e^4\|P_{\geq \f{N(\e)}{2}}\pa_x u^R\|_{X^{r+1-\f{\s}2}}^2ds\\
\leq& C\int_0^t \e^2\|P_{\geq N(\e)}\e\pa_x u^R\|_{X^{r+1}}^2+\|P_{\geq\f{N(\e)}2}P_{\leq N(\e)}\e\om^R\|^2_{X^{r+1-\f{\s}2}}ds\\
\leq& C\int_0^t \e^2\|P_{\geq N(\e)}\e\pa_x u^R\|_{X^{r+1}}^2+\|\om^R\|_{X^{r}}^2ds\\
\leq&{C\int_0^t \e^2}\|P_{\geq N(\e)}\e\pa_x u^R\|_{X^{r+1}}^2+\|\om^{in}\|_{X^{r+\f{\s}{2}}}^2ds.
\end{align*}
Similarly, we have
\begin{align*}
I_4\leq&C\int_0^t \|v^R_\Phi\|_{L^\infty_y(H^{\f12+}_x)}^2\|P_{\geq\f{N(\e)}{2}}\pa_y u^R\|_{X^{r+1-\f{\s}2}}^2+\|P_{\geq\f{N(\e)}{2}}v^R_\Phi\|_{L^\infty_y(H^{r+1-\f{\s}2}_x)}^2\|\pa_y u^R\|_{X^{\f12+}}^2ds\\
\leq& C\int_0^t \|\om^R\|_{X^{\f32+}}^2\big(\|P_{\geq\f{N(\e)}{2}}\pa_y u^R\|_{X^{r+1-\f{\s}2}}^2+\|P_{\geq\f{N(\e)}{2}}\pa_xu^R\|_{X^{r+1-\f{\s}2}}^2\big)ds\\
\leq& C\int_0^t \|P_{\geq N(\e)}\e^2(\pa_y,\pa_x) u^R\|_{X^{r+1}}^2+\|\e^2P_{\geq \f{N(\e)}{2}}P_{\leq N(\e)}(\pa_y u^R,\pa_x u^R)\|_{X^{r+1-\f{\s}2}}^2ds\\
\leq &{C\int_0^t \e^2}\|P_{\geq N(\e)}(\pa_y,\e\pa_x) u^R\|_{X^{r+1}}^2+\|\e P_{\geq \f{N(\e)}{2}}P_{\leq N(\e)}\om^R\|_{X^{r+1-\f{\s}2}}^2ds\\
\leq &{C\int_0^t \e^2}\|P_{\geq N(\e)}(\pa_y,\e\pa_x) u^R\|_{X^{r+1}}^2+\|\om^{in}\|_{X^{r+\f{\s}{2}}}^2ds.
\end{align*}
This shows that 
\begin{align*}
\int_0^t\|P_{\geq N(\e)}\mathcal{N}_u\|_{X^{r+1-\f{\s}2}}^2ds\leq& {C\int_0^t \e^2}\|P_{\geq N(\e)}(\pa_y,\e\pa_x) u^R\|_{X^{r+1}}^2+\|\om^{in}\|_{X^{r+\f{\s}{2}}}^2ds.
\end{align*}

The estimate for $\e\mathcal{N}_v$ is obtained by changing $u^R$ into $\e v^R$ and we omit details. 
\end{proof}

\section{Proof of Theorem \ref{thm:limit}}
This section is devoted to proving Theorem \ref{thm:limit}. \smallskip

\begin{itemize}

\item {\bf Local well-posedness}.  The local well-posedness of the anisotropic Navier–Stokes equations in the Gevrey class can be proved by a standard energy method. Here we omit the details.
Let $T_1$ be the maximal existence time of the solution.

\item {\bf Bootstrap assumption}:
\begin{align*}
\sup_{s\in[0,t]}\|\om^R(s)\|_{X^{r-1}}^2+\int_0^t\|(\pa_y \om^R,\e\pa_x\om^R)\|_{X^{r-1}}^2\leq \mathfrak{C}\e^4
\end{align*}
for  any $t\in[0,T_1].$ Here $\mathfrak{C}$ is determined later.

\item {\bf Energy functional}:
\begin{align*}
E(t)=&\|\om^{in}(t)\|_{X^r}^2+A\e^2\|P_{\geq N(\e)}(u^{R},\e v^R)(t)\|_{X^{r+1}}^2+\|P_{\geq N(\e)}(u^{R},\e v^R)(t)\|_{X^{r+1-\s}}^2,\\
G(t)=&\|\om^{in}(t)\|_{X^{r+\f{\s}{2}}}^2+A\e^2\|P_{\geq N(\e)}(u^{R},\e v^R)(t)\|_{X^{r+1+\f{\s}{2}}}^2+\|P_{\geq N(\e)}(u^{R},\e v^R)(t)\|_{X^{r+1-\f{\s}{2}}}^2,\\
D(t)=&\|(\pa_y,\e\pa_x) \om^{in}(t)\|_{X^r}^2+A\e^2\|P_{\geq N(\e)}(\pa_y,\e\pa_x)( u^{R},\e v^R)\|_{X^{r+1}}^2\\
&+\|P_{\geq N(\e)}(\pa_y,\e\pa_x)( u^{R},\e v^R)\|_{X^{r+1-\s}}^2,
\end{align*}
where  $A$ is a large constant determined later.

\item {\bf Energy estimates}. It follows from Proposition \ref{prop:om-in},  Proposition \ref{prop:ur-high} 
and Proposition \ref{prop:non} that 
\begin{align*}
&\sup_{s\in[0,t]}E(s)+\beta\int_0^tG(s)ds+\int_0^t D(s)ds\\
&\leq C\int_0^tG(s)ds+\big(\f{C}{A}+C_1\d\big)\int_0^t D(s)ds+Ct\e^4+C_1\delta \sup_{s\in[0,t]}E(s).
\end{align*}
Here $C_1$ is independent of $\delta$.  Taking $\beta$ and $A$ large enough and $\d$  small enough, we obtain
\begin{align}
\sup_{s\in[0,t]}E(s)+\beta\int_0^tG(s)ds+\int_0^t D(s)ds\leq Ct\e^4\label{eq:energy}
\end{align}
for any  $t\in[0,T_1].$

\item {\bf Improving the bootstrap assumption.} It follows from Corollary \ref{cor:omR}  and  \eqref{eq:energy} that 
\begin{align*}
\sup_{s\in[0,t]}\|\om^R(s)\|_{X^{r-1}}^2+\int_0^t\|(\pa_y \om^R,\e\pa_x\om^R)\|_{X^{r-1}}^2\leq Ct\e^4\leq \f{\mathfrak{C}}{2}\e^4.
\end{align*}
by choosing $\mathfrak{C}$ so that $\mathfrak{C}\geq 2CT.$ This in particular implies that $T_1\ge T$.

\item {\bf Stability in $L^2\cap L^\infty$}. By the Sobolev  embedding, we get
\begin{align*}
\|(u^R,\e v^R)\|_{L^2_{x,y}\cap L^\infty_{x,y}}\leq C\e^2.
\end{align*}

\end{itemize}

\section* {Acknowledgments}
C. Wang is partially supported by NSF of China under Grant 11701016. Y. Wang is partially supported by China Postdoctoral Science Foundation 8206200009.

\end{CJK*}

\end{document}